\definecolor{bgcolor}{rgb}{0.8,1,1}
\definecolor{bgcolor2}{rgb}{0.8,1,0.8}
\definecolor{niceblue}{rgb}{0.0,0.19,0.56}
\definecolor{PineGreen}{RGB}{0,110,51}
\definecolor{BrickRed}{RGB}{143,20,2}
\definecolor{shadecolor}{gray}{0.9}
\declaretheoremstyle[
headfont=\normalfont\bfseries,
notefont=\mdseries, notebraces={(}{)},
bodyfont=\normalfont,
postheadspace=0.5em,
spaceabove=1pt,
mdframed={
  skipabove=8pt,
  skipbelow=8pt,
  hidealllines=true,
  backgroundcolor={shadecolor},
  innerleftmargin=4pt,
  innerrightmargin=4pt}
]{shaded}
\declaretheorem[style=shaded,within=section]{definition}
\declaretheorem[style=shaded,sibling=definition]{theorem}
\declaretheorem[style=shaded,sibling=definition]{corollary}
\declaretheorem[style=shaded,sibling=definition]{lemma}
\newcommand{\JA}{\opJ_{\alpha\opA}}
\newcommand{\JB}{\opJ_{\alpha\opB}}
\newcommand{\fA}{\mathfrak A}
\newcommand{\cut}[1]{{}}
\newcommand{\vK}{{\mathbf{K}}}
\newcommand{\vL}{{\mathbf{L}}}
\newcommand{\cN}{{\mathcal{N}}}
\newcommand{\cO}{{\mathcal{O}}}
\newcommand{\cP}{{\mathcal{P}}}
\newcommand{\cX}{{\mathcal{X}}}
\newcommand{\cY}{{\mathcal{Y}}}
\newcommand{\cZ}{{\mathcal{Z}}}
\DeclareFontFamily{U}{ntxmia}{}
\DeclareFontShape{U}{ntxmia}{m}{it}{<-> ntxmia }{}
\DeclareFontShape{U}{ntxmia}{b}{it}{<-> ntxbmia }{}
\DeclareSymbolFont{lettersA}{U}{ntxmia}{m}{it}
\NewDocumentCommand{\varmathbb}{m}
 {
  \tl_map_inline:nn { #1 }
   {
    \use:c { varbb##1 }
   }
 }
\DeclareMathSymbol{varbb#1}{\mathord}{lettersA}{\int_eval:n { `#1+67 }}
\DeclareMathSymbol{varbbk}{\mathord}{lettersA}{169}
\newcommand{\TDRS}{\opT_{\textrm{DRS}}}
\DeclareMathOperator*{\argmin}{argmin}
\newcommand*{\fix}{\mathrm{Fix}\,}
\newcommand*{\zer}{\mathrm{Zer}\,}
\newcommand*{\gra}{\mathrm{Gra}\,}
\newcommand{\prox}{\mathrm{Prox}}
\newcommand{\proj}{\Pi}
\newcommand{\dom}{\mathrm{dom}\,} 
\newcommand{\domx}{\mathrm{dom}_x\,}
\newcommand{\domy}{\mathrm{dom}_y\,}
\newcommand{\reals}{\mathbb{R}}
\definecolor{lightgrey}{gray}{0.8}
\definecolor{medgrey}{gray}{0.6}
\definecolor{darkgrey}{gray}{0.4}
\newcommand{\opA}{{\varmathbb{A}}}
\newcommand{\opB}{{\varmathbb{B}}}
\newcommand{\opG}{{\varmathbb{G}}}
\newcommand{\opI}{{\varmathbb{I}}}
\newcommand{\opJ}{{\varmathbb{J}}}
\newcommand{\opT}{{\varmathbb{T}}}
\newcommand{\lagrange}{\mathbf{L}}
\newcommand{\sop}{\nabla_{\pm}}
\newcommand{\ssd}{\partial_{\pm}}
\newcommand*{\cl}{\mathrm{cl}\,}
\newcommand*{\clx}{\mathrm{cl}_x\,}
\newcommand*{\cly}{\mathrm{cl}_y\,}
\newcommand{\smeag}{SM-EAG\texttt{+}}
\newcommand{\apg}{APG\textsuperscript{*}}
\theoremstyle{plain}
\theoremstyle{remark}
\definecolor{bgcolor}{rgb}{0.8,1,1}
\definecolor{bgcolor2}{rgb}{0.8,1,0.8}
\definecolor{niceblue}{rgb}{0.0,0.19,0.56}
\definecolor{shadecolor}{gray}{0.9}
\declaretheoremstyle[
headfont=\normalfont\bfseries,
notefont=\mdseries, notebraces={(}{)},
bodyfont=\normalfont,
postheadspace=0.5em,
spaceabove=1pt,
mdframed={
  skipabove=8pt,
  skipbelow=8pt,
  hidealllines=true,
  backgroundcolor={shadecolor},
  innerleftmargin=4pt,
  innerrightmargin=4pt}
]{shaded}
\renewcommand*{\thefootnote}{\fnsymbol{footnote}}
\begin{document}
\title{\bf Accelerated Minimax Algorithms Flock Together\footnote{Published in SIAM Journal on Optimization.}}

\renewcommand*{\thefootnote}{\arabic{footnote}}
\setcounter{footnote}{0}

\author{TaeHo Yoon\textsuperscript{$\dagger$} \\
    \texttt{tyoon7@jhu.edu}  \and  Ernest K.\ Ryu\textsuperscript{$\ddagger$} \\
    \texttt{eryu@math.ucla.edu} }
\date{
\textsuperscript{$\dagger$}Department of Applied Mathematics \& Statistics, Johns Hopkins University \\
\textsuperscript{$\ddagger$}Department of Mathematics, University of California, Los Angeles
}
    
\maketitle

\begin{abstract}
Several new accelerated methods in minimax optimization and fixed-point iterations have recently been discovered, and, interestingly, they rely on a mechanism distinct from Nesterov's momentum-based acceleration. In this work, we show that these accelerated algorithms exhibit what we call the merging path (MP) property; the trajectories of these algorithms merge quickly. Using this novel MP property, we establish point convergence of existing accelerated minimax algorithms and derive new state-of-the-art algorithms for the strongly-convex-strongly-concave setup and for the prox-grad setup.
\end{abstract}

\section{Introduction}
\label{sec:introduction}

Minimax optimization problems of the form
\begin{align}
\label{eqn:minimax_problem}
    \underset{x \in \cX}{\textrm{minimize}} \,\, \underset{y \in \cY}{\textrm{maximize}} \,\,\,\, \lagrange (x,y)
\end{align}
have recently received increased attention in many machine learning applications including robust optimization \cite{SinhaNamkoongDuchi2018_certifying}, adversarial training \cite{MadryMakelovSchmidtTsiprasVladu2018_deep}, fair machine learning \cite{EdwardsStorkey2016_censoring}, and GANs \cite{GoodfellowPouget-AbadieMirzaXuWarde-FarleyOzairCourvilleBengio2014_generative}.
Consequently, there has been a large body of research on efficient algorithms for solving minimax optimization.
For deterministic smooth convex-concave objectives with Lipschitz continuous gradient, the classical extragradient (EG) \cite{Korpelevich1976_extragradient} and the optimistic gradient (OG) descent \cite{Popov1980_modification, RakhlinSridharan2013_online, DaskalakisIlyasSyrgkanisZeng2018_training} methods converge with $\mathcal{O}(1/k)$ rate on the squared gradient norm \cite{RyuYuanYin2019_ode, GorbunovLoizouGidel2022_extragradient}.
On the other hand, several recently discovered accelerated algorithms \cite{YoonRyu2021_accelerated, LeeKim2021_fast, Tran-DinhLuo2021_halperntype} based on the \emph{anchoring} mechanism exhibits the faster (and optimal) $\cO(1/k^2)$ convergence under the same setup.

In a different line of work on the fixed-point problem
\begin{align}
\label{eqn:fixed_point_problem}
\begin{array}{cc}
    \underset{z\in\reals^d}{\textrm{find}} & z = \opT(z) ,
\end{array}
\end{align}
the Halpern iteration \cite{Halpern1967_fixed}, which averages the iterates with the initial point, was shown to achieve an accelerated $\mathcal{O}(1/k^2)$ rate on the squared fixed-point residual norm when executed with optimal parameters \cite{Lieder2021_convergence, Kim2021_accelerated}.
Interestingly, both the Halpern iteration and the accelerated minimax algorithms rely on the anchoring mechanism, but a precise theoretical explanation of such apparent similarity was, to the best of our knowledge, yet missing.

\begin{figure*}[ht]
\captionsetup[subfigure]{justification=centering}
    \centering
    \begin{subfigure}{0.48\linewidth}
    \centering
    \includegraphics[scale=0.4]{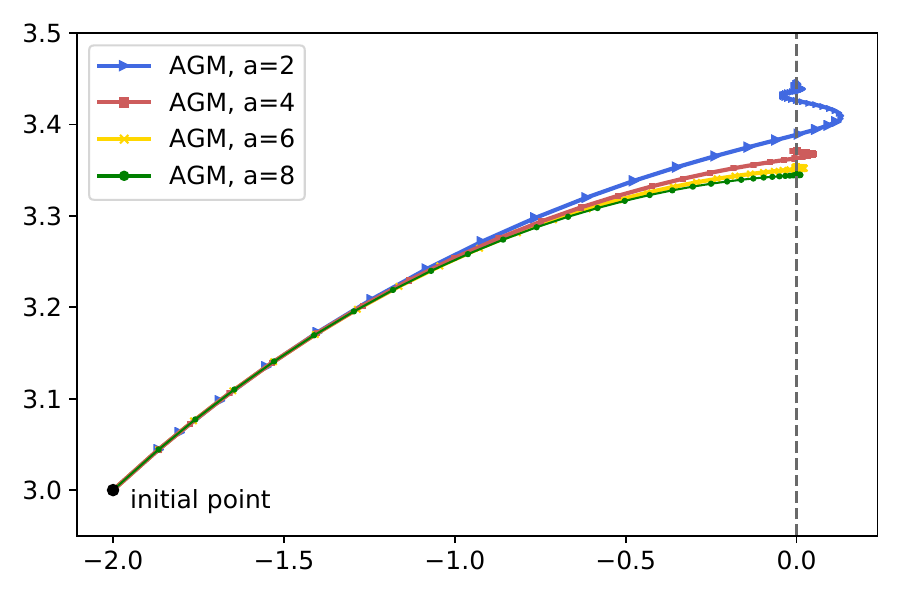}
    \caption{Divergent paths of AGM}
    \label{subfig:FISTA_divergent_paths}
    \end{subfigure}
    \hspace{.2cm}
    \begin{subfigure}{0.48\linewidth}
    \centering
    \includegraphics[scale=0.4]{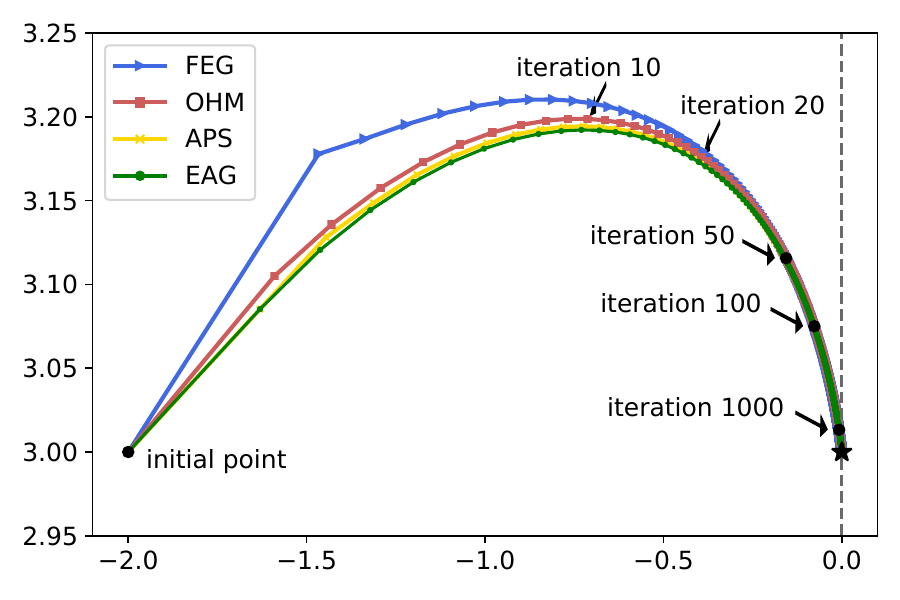}
    \caption{Merging paths of anchored methods}
    \label{subfig:anchored_merging_paths}
    \end{subfigure}
    \caption[]{Trajectories of \textbf{(left)} Nesterov's accelerated algorithms (AGM \cite{Nesterov1983_method, BeckTeboulle2009_fast, ChambolleDossal2015_convergence}; described in \cref{subsec:AGM_and_FISTA}) with $\alpha=0.025$ and distinct momentum parameters and \textbf{(right)} anchoring-based algorithms with $\alpha=0.1$. 
    Here, $\alpha$ denotes the step-size.
    On the right, paths quickly merge and become indistinguishable in 50 iterations.
     All algorithms are executed on the minimization problem with $f(x_1,x_2)=\frac{4x_1^2}{x_2}$ (which is convex and smooth on, e.g., $|x_1| \le 2, x_2 \ge 3$, where all iterates stay within), starting at $(x_1,x_2) = (-2,3)$.
    }
    \label{fig:algorithm_trajectories}
\end{figure*}

\subsection{Organization and contribution}
In this work, we identify the \emph{merging path (MP) property} among anchoring-based algorithms, which states that the trajectories of these algorithms quickly merge, and use this novel property to develop new accelerated algorithms. First, we show that known accelerated minimax algorithms \cite{YoonRyu2021_accelerated, LeeKim2021_fast, Tran-DinhLuo2021_halperntype} have $\mathcal{O}(1/k^2)$-merging paths with the optimized Halpern iteration \cite{Halpern1967_fixed, Lieder2021_convergence} and thereby establish their point convergence\footnote{By point convergence, we mean the iterates converge to a solution. For example, the $z_k$ iterates of EAG satisfy $z_k \to z_\star$, where $z_\star$ is a solution to~\cref{eqn:minimax_problem}.}. Second, we present a new accelerated minimax algorithm for smooth strongly-convex-strongly-concave setup designed to approximate, in the MP sense, the OC-Halpern method, which has an optimal accelerated rate. Third, we present a near-optimal accelerated proximal gradient type algorithm, designed to approximate, in the MP sense, the Halpern-accelerated Douglas-Rachford splitting \cite{DouglasRachford1956_numerical, LionsMercier1979_splitting}.

\subsection{The MP property}
\label{subsec:MP}

Let $\fA$ be a deterministic algorithm.
Write $\fA (x_0; \cP) = (x_0, x_1, x_2, \dots)$ to denote that $\fA$ applied to problem $\cP$ with starting point $x_0$ produces iterates $x_1,x_2,\dots$.
We say two algorithms $\fA_1 $ and $\fA_2$ have $\mathcal{O}(r(k))$-\emph{merging paths} if for any problem $\cP$ and $x_0 \in \reals^d$, the iterates $\fA_\ell (x_0; \cP) = \big( x_0, x_1^{(\ell)}, x_2^{(\ell)}, \dots \big)$ for $ \ell = 1, 2$ satisfy $\big\| x_k^{(1)} - x_k^{(2)} \big\|^2 = \mathcal{O}(r(k))$.
More concisely, we say $\fA_1$ and $\fA_2$ are $\mathcal{O}(r(k))$-MP if they have $\mathcal{O}(r(k))$-merging paths.

The MP property precisely formalizes a notion of near-equivalence of algorithms. In particular, it is a stronger notion of approximation compared to the conceptual ones of prior work \cite{MokhtariOzdaglarPattathil2020_unified, AhnSra2022_understanding}. 
Classical first-order algorithms for smooth convex minimization are not $\mathcal{O}(r(k))$-MP with $r(k)\rightarrow 0$. 
\cref{fig:algorithm_trajectories} shows that accelerated gradient methods with different momentum coefficients have divergent paths, even though their function values (of course) converge to the same optimal value.
In this paper, we show that the accelerated minimax algorithms for smooth convex-concave minimax optimization are $\mathcal{O}(1/k^2)$-MP (which is faster than the $o(1)$-point convergence). 
\cref{subfig:anchored_merging_paths} shows that the algorithms' paths indeed merge far before they converge to the limit.

\subsection{Related work}
The extragradient (EG) \cite{Korpelevich1976_extragradient} and Popov's algorithm \cite{Popov1980_modification}, also known as optimistic gradient (OG) descent \cite{RakhlinSridharan2013_online, DaskalakisIlyasSyrgkanisZeng2018_training}, have initiated the long stream of research in the optimization literature on the closely interconnected topics of minimax optimization \cite{Nemirovski2004_proxmethod, HeMonteiro2016_accelerated, KolossoskiMonteiro2017_accelerated, MertikopoulosLecouatZenatiFooChandrasekharPiliouras2019_optimistic, MokhtariOzdaglarPattathil2020_unified, MokhtariOzdaglarPattathil2020_convergence, AzizianMitliagkasLacoste-JulienGidel2020_tight, HamedaniAybat2021_primaldual, TsaknakisHongZhang2023_minimax}, variational inequalities \cite{Noor2003_new, Nesterov2007_dual, NesterovScrimali2011_solving, JuditskyNemirovskiTauvel2011_solving, CensorGibaliReich2011_strong, CensorGibaliReich2011_subgradient, MalitskySemenov2014_extragradient, Malitsky2015_projected, ChenLanOuyang2017_accelerated, HsiehIutzelerMalickMertikopoulos2019_convergence, Malitsky2020_golden}, and monotone inclusion problems \cite{SolodovSvaiter1999_hybrid, Tseng2000_modified, MonteiroSvaiter2010_complexity, MonteiroSvaiter2011_complexity, LyashkoSemenovVoitova2011_lowcost}.
A number of advances in minimax optimization have been made in the context of practical machine learning applications, including the multi-player game dynamics \cite{DaskalakisDeckelbaumKim2015_nearoptimal, SyrgkanisAgarwalLuoSchapire2015_fast, GidelHemmatPezeshkiPriolHuangLacoste-JulienMitliagkas2019_negative, HsiehAntonakopoulosMertikopoulos2021_adaptive} and GANs \cite{YadavShahXuJacobsGoldstein2018_stabilizing, GidelBerardVignoudVincentLacoste-Julien2019_variational, ChavdarovaGidelFleuretLacoste-Julien2019_reducing, LiangStokes2019_interaction, MertikopoulosLecouatZenatiFooChandrasekharPiliouras2019_optimistic, RyuYuanYin2019_ode, PengDaiZhangCheng2020_training}.
A recent trend of the field focuses on achieving near-optimal efficiency with respect to primal-dual gap or distance to the optimum for convex-concave minimax problems with distinct strong convexity, strong concavity or smoothness parameters for each variable \cite{ThekumparampilJainNetrapalliOh2019_efficient, LinJinJordan2020_nearoptimal, WangLi2020_improved, AlkousaGasnikovDvinskikhKovalevStonyakin2020_accelerated, YangZhangKiyavashHe2020_catalyst, TomininTomininBorodichKovalevGasnikovDvurechensky2023_accelerated, JinSidfordTian2022_sharper}, closely matching the lower bound results \cite{OuyangXu2021_lower, ZhangHongZhang2022_lower}.
Another line of works focuses on finding approximate stationary points of nonconvex-strongly-concave (or Polyak-Łojasiewicz) minimax problems, which can be viewed as reducing generalized versions of the objective's gradient magnitude \cite{BarazandehRazaviyayn2020_solving, LinJinJordan2020_gradient, KongMonteiro2021_accelerated, OstrovskiiLowyRazaviyayn2021_efficient, YangOrvietoLucchiHe2022_faster}.

The proximal point method (PPM) \cite{Moreau1965, Martinet1970_regularisation, Rockafellar1976_monotone} is an implicit algorithm for solving convex minimization, minimax optimization, and, more generally, monotone inclusion problems.
Although PPM is not always directly applicable, it has been used as the conceptual basis for developing many algorithms with approximate proximal evaluations \cite{Rockafellar1970_monotone, Guler1992_new, SolodovSvaiter1999_hybrid, Nemirovski2004_proxmethod, HeYuan2012_convergence, ChambollePock2016_ergodic, Drusvyatskiy2017_proximal, DavisGrimmer2019_proximally, KongMonteiro2021_accelerated, OstrovskiiLowyRazaviyayn2021_efficient, RafiqueLiuLinYang2021_weaklyconvexconcave}.
Several prior works \cite{MokhtariOzdaglarPattathil2020_unified, AhnSra2022_understanding} interpreted prominent forward algorithms such as Nesterov's accelerated gradient method (AGM), EG, and OG as conceptual approximations of PPM.

Since the seminal work of Nesterov \cite{Nesterov1983_method}, acceleration of first-order algorithms has been extensively studied for convex minimization problems \cite{Nesterov2005_smooth, BeckTeboulle2009_fast, KimFessler2016_optimized, B.VanScoyR.A.FreemanK.M.Lynch2018_fastest, ParkParkRyu2021_factorsqrt2, TaylorDrori2023_optimal}.
Lately, there has been a rapid progress in the discovery of accelerated algorithms with respect to the gradient magnitude of the objective for smooth convex problems \cite{KimFessler2021_optimizing, NesterovGasnikovGuminovDvurechensky2020_primalduala, DiakonikolasWang2022_potential, LeeParkRyu2021_geometric, ZhouTianSoCheng2022_practical} and smooth convex-concave minimax problems \cite{Diakonikolas2020_halpern, YoonRyu2021_accelerated, LeeKim2021_fast, Tran-DinhLuo2021_halperntype}.
Acceleration of PPM has been achieved for convex minimization \cite{Guler1992_new} and more recently for monotone inclusions in terms of the fixed-point residual \cite{Kim2021_accelerated, ParkRyu2022_exact}. Notably, the latter has been shown \cite{ParkRyu2022_exact, RyuYin2022_largescale} to be an instance of the Halpern iteration \cite{Halpern1967_fixed} with optimal parameters \cite{Lieder2021_convergence}.
While the accelerated minimax algorithms bear an apparent resemblance to the Halpern iteration \cite{Diakonikolas2020_halpern, Tran-DinhLuo2021_halperntype}, the latest results with optimal rates \cite{YoonRyu2021_accelerated, LeeKim2021_fast, Tran-DinhLuo2021_halperntype} were not formally understood through this connection.

The particular case of bilinearly-coupled minimax problems, $\lagrange(x,y) = f(x) + \langle x, Ay \rangle - g(y)$ for some convex functions $f$ and $g$ and a linear operator $A$, have been studied by many researchers.
A line of works \cite{Nesterov2005_smooth, XieHanZhang2021_dippa, ThekumparampilHeOh2022_lifted} considered the setups where $f$ and $g$ are smooth and one can only access their gradients.
Another line of works, more closely connected to the setup we study in \Cref{sec:composite_minimax}, considered the cases when $f$ or $g$ is proximable, and used primal-dual splitting algorithms to solve those problems \cite{ChambollePock2011_FirstOrderPrimalDual, Condat2013_primaldual, Vu2013_splitting, ChenLanOuyang2014_optimal, HeMonteiro2016_accelerated, KolossoskiMonteiro2017_accelerated, Yan2018_new, RyuYin2022_largescale}.
Primal-dual problem setups can often be reformulated into a more flexible format of composite monotone inclusion problems and addressed via forward-backward splitting algorithms \cite{Tseng2000_modified, CombettesPesquet2011_proximal, Condat2013_primaldual, Malitsky2015_projected, CsetnekMalitskyTam2019_shadow, MalitskyTam2020_forwardbackward}, which also extend the prox-grad-type methods including the iterative shrinkage-thresholding algorithms (ISTA) \cite{DaubechiesDefriseDeMol2004_iterative, BeckTeboulle2009_fast}.
Our result of \Cref{sec:composite_minimax} can be understood as acceleration of forward-backward algorithms with respect to the forward-backward residual, which generalizes the gradient magnitude.
For the special case of the projected-gradient setup, a near-optimal acceleration has been achieved in prior work \cite{Diakonikolas2020_halpern}.

\section{Background and preliminaries}
\label{sec:definitions}
A function $\lagrange(\cdot, \cdot): \cX \times \cY \to \reals$ is \emph{convex-concave} if $\cX$ and $\cY$ are convex sets, $\lagrange(x, y)$ is convex as a function of $x$ for all $y$, and $\lagrange(x, y)$ is concave as a function of $y$ for all $x$.
A point $(x_\star, y_\star) \in \cX \times \cY$ is a \emph{saddle point}, or \emph{(minimax) solution} of~\eqref{eqn:minimax_problem} if $\lagrange(x_\star, y) \le \lagrange(x_\star, y_\star) \le \lagrange(x, y_\star)$ for all $(x,y) \in \cX \times \cY$.

An operator $\opA$ on $\reals^d$, denoted $\opA\colon \reals^d \rightrightarrows \reals^d$, is a set-valued function, i.e., $\opA(z)\subseteq\reals^d$ for all $z \in \reals^d$.
For simplicity, we often write $\opA z = \opA(z)$.
If $\opA z$ contains exactly one element for all $z\in\reals^d$, then we say $\opA$ is \emph{single-valued} and view it as a function.
An operator $\opA\colon \reals^d \rightrightarrows \reals^d$ is \emph{monotone} if
\begin{align*}
    \langle \opA z - \opA z', z - z' \rangle \ge 0, \qquad \forall \, z,z'\in\reals^d ,
\end{align*}
where the notation means that $\langle u - v, z - z' \rangle \ge 0$ for any $u \in \opA z$ and $v \in \opA z'$.
For $\mu > 0$, an operator $\opA$ is $\mu$-\emph{strongly monotone} if
\begin{align*}
    \langle \opA z - \opA z', z - z' \rangle \ge \mu \|z-z'\|^2, \qquad \forall \, z,z'\in\reals^d .
\end{align*}
The graph of $\opA$ is denoted $\gra \opA = \{(z, u)\,|\, u \in \opA z\}$.
A monotone operator $\opA$ is \emph{maximally monotone} if there is no monotone operator $\opA'$ such that $\gra \opA \subset \gra \opA'$ strictly.
In particular, if $\opA$ is monotone, single-valued, and continuous as a function, then it is maximally monotone \cite[Corollary~20.28]{BauschkeCombettes2017_convex}.
Define
\begin{align*}
    (\opA+\opB)(z) = \{u+v\in\reals^d\,|\,u \in \opA z, v \in \opB z\}, \quad (\alpha\opA)(z) = \{\alpha u\in\reals^d\,|\,u \in \opA z\} 
\end{align*}
for operators $\opA$ and $\opB$ and scalar $\alpha\in\reals$.
The inverse of an operator $\opA$ is the operator $\opA^{-1}\colon \reals^d \rightrightarrows \reals^d$ defined by $\opA^{-1}(u) = \{z\in\reals^d\,|\,u \in \opA z\}$. 

In a \emph{monotone inclusion problem}, 
\begin{align}
\label{eqn:monotone_inclusion}
    \begin{array}{ll}
        \underset{z \in \reals^{d}}{\textrm{find}} & 0 \in \opA (z) ,
    \end{array}
\end{align}
where $\opA\colon \reals^d \rightrightarrows \reals^d$ is monotone, one finds zeros of an operator $\opA$.
For any $\alpha > 0$, we have $0 \in \opA (z) \iff z \in (\opI + \alpha\opA)(z) \iff z \in (\opI + \alpha\opA)^{-1}(z)$.
Therefore, \eqref{eqn:monotone_inclusion} is equivalent to the \emph{fixed-point problem}~\eqref{eqn:fixed_point_problem} with $\opT = (\opI + \alpha\opA)^{-1}=\JA$.
We call $\JA$ the \emph{resolvent} of $\alpha\opA$.

We say $\opT\colon \reals^d \to \reals^d$ is \emph{nonexpansive} if it is $1$-Lipschitz continuous and \emph{contractive} if it is $\rho$-Lipschitz with $\rho < 1$.
If $\opA$ is maximally monotone, then $\JA$ has full domain \cite{Minty1962_monotone} and is nonexpansive \cite{Rockafellar1976_monotone} for any $\alpha > 0$.
Denoting $\zer \opA = \{z\in\reals^d\,|\, 0\in \opA(z)\}$ and $\fix \opT = \{z\in\reals^d\,|\, z=\opT(z)\}$, we have $\zer\opA = \fix\JA$.
We call the algorithm $z_{k+1} = \JA(z_k)$ the \emph{proximal point method (PPM)} for $\opA$.

For differentiable and convex-concave $\lagrange\colon\reals^n \times \reals^m\rightarrow\reals$, we define
\[
    \sop\lagrange (x,y) = (\nabla_x \lagrange (x,y), -\nabla_y \lagrange (x,y))
\]
and call it the \emph{saddle operator} of $\lagrange$.
Then $\sop \lagrange$ is monotone \cite{Rockafellar1970_monotone}, and $z_\star = (x_\star, y_\star)$ is a solution of~\eqref{eqn:minimax_problem} if and only if $\sop \lagrange (z_\star) = 0$.
Therefore, a convex-concave minimax optimization problem~\eqref{eqn:minimax_problem} can be reformulated as a monotone inclusion problem \eqref{eqn:monotone_inclusion} with $\opA = \sop\lagrange$ and as a fixed-point problem \eqref{eqn:fixed_point_problem} with $\opT = \opJ_{\sop\lagrange}$.

The \emph{fixed-point residual norm} $\|z-\opT z\|$ quantifies the rate of convergence of an algorithm for finding fixed points.
When $\opT = \JB$, we have $z-\opT z = z - \JB (z) \in \alpha \opB (\JB (z))$, because
\begin{align*}
    u = \JB(z) = (\opI + \alpha\opB)^{-1}(z) \iff z \in (\opI + \alpha\opB)(u) = u + \alpha \opB u .
\end{align*}
Thus, when $\opT = \JB$ and $\opB$ is single-valued, a convergence rate on $\|z-\opT z\|$ is equivalent to a convergence rate on $\|\opB(\cdot)\|$.

\section{Accelerated smooth minimax algorithms are \texorpdfstring{$\mathcal{O}(1/k^2)$}{O(1/k2)}-MP}
\label{sec:smooth_minimax}

In this section, we prove the $\mathcal{O}(1/k^2)$-MP property among accelerated minimax algorithms and the optimized Halpern iteration.
The problem setup is unconstrained minimax problems (so $\cX=\reals^n$ and $\cY=\reals^m$) with convex-concave $\lagrange$ such that $\sop\lagrange$ is $L$-Lipschitz.
For notational convenience, write $\opB = \sop\lagrange \colon \reals^d \to \reals^d$, with $d=n+m$.
Our MP result provides new insight into these acceleration mechanisms and allows us to establish their point convergence $z_k \to z_\star \in \zer\opB$.

\subsection{Preliminaries: Accelerated minimax and proximal algorithms}
Recently, several accelerated minimax algorithms with rate $\|\opB z_k\|^2 = \mathcal{O}(1/k^2)$ have been proposed.
The list includes the \emph{extra anchored gradient (EAG)} algorithm \cite{YoonRyu2021_accelerated}
\begin{align*}
    z_{k+1/2} & = \beta_k z_0 + (1-\beta_k) z_k - \alpha \opB z_k \\
    z_{k+1}   & = \beta_k z_0 + (1-\beta_k) z_k - \alpha \opB z_{k+1/2},
\end{align*}
the \emph{fast extragradient (FEG)} algorithm \cite{LeeKim2021_fast}
\begin{align*}
    z_{k+1/2} & = \beta_k z_0 + (1-\beta_k) \left( z_k - \alpha \opB z_k \right) \\
    z_{k+1}   & = \beta_k z_0 + (1-\beta_k) z_k - \alpha \opB z_{k+1/2} ,
\end{align*}
and the \emph{anchored Popov's scheme (APS)} \cite{Tran-DinhLuo2021_halperntype}, with $v_0=z_0$,
\begin{align*}
    v_{k+1} & = \beta_k z_0 + (1-\beta_k) z_k - \alpha \opB v_k \\
    z_{k+1} & = \beta_k z_0 + (1-\beta_k) z_k - \alpha \opB v_{k+1} .
\end{align*}
Prior to the development of these minimax algorithms, the \emph{optimized Halpern's method (OHM)} \cite{Lieder2021_convergence, Kim2021_accelerated} was presented for solving the fixed-point problem~\eqref{eqn:fixed_point_problem} with nonexpansive operator $\opT\colon \reals^d \to \reals^d$:
\begin{align}
\begin{aligned}
    w_{k+1/2} & = \beta_k w_0 + (1-\beta_k) w_k \\
    w_{k+1}   & = \opT (w_{k+1/2}),
\end{aligned}
    \label{eqn:OHM}
\end{align}
where $\beta_k = \frac{1}{k+1}$.
OHM converges with rate $\|w_{k+1/2} - \opT w_{k+1/2}\|^2 \le 4\|w_0-w_\star\|^2/(k+1)^2$, if a fixed point $w_\star$ exists.

To clarify, the prior works \cite{YoonRyu2021_accelerated, Tran-DinhLuo2021_halperntype} study further general forms of EAG and APS with iteration-dependent step-sizes.
For the sake of simplicity, we consider the case of constant step-sizes in this paper.

\subsection{MP property among accelerated algorithms}
The accelerated forward algorithms, EAG, FEG, and APS, do not merely resemble OHM in their form. The algorithms are, in fact, near-equivalent in the sense that they have quickly merging paths.

\begin{theorem}[EAG$\approx$FEG$\approx$APS$\approx$OHM]
\label{thm:EAG_FEG_Halpern_MP}
Let $\opB\colon \reals^d \to \reals^d$ be monotone and $L$-Lipschitz and assume $z_\star \in\zer\opB = \fix \JB$ exists.
Let  $\opT=\JB$ and $\beta_k = \frac{1}{k+1}$.
Then FEG and OHM are $\mathcal{O}(\|z_0-z_\star\|^2/k^2)$-MP for $\alpha \in \left(0, \frac{1}{L}\right)$,
and there exists $\eta > 0$ (not depending on $L$) such that EAG, APS, and OHM are $\mathcal{O}(\|z_0-z_\star\|^2/k^2)$-MP for $\alpha \in \left(0, \frac{\eta}{L}\right)$.
\end{theorem}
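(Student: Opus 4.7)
The plan is to compare each forward algorithm (FEG, EAG, APS) to OHM step-by-step and then telescope the resulting one-step discrepancy using the anchoring contraction $(1-\beta_k) = k/(k+1)$. Write $z_k$ for the forward-algorithm iterate, $w_k$ for the OHM iterate, and $u_k^z = \beta_k z_0 + (1-\beta_k) z_k$, $u_k^w = \beta_k z_0 + (1-\beta_k) w_k$ for the anchored averages; both algorithms share the anchor $z_0$. All four updates have the form $u_k + (\text{correction})$, where OHM uses the exact proximal correction $-\alpha \opB \JB(u_k^w)$ while the forward methods use an explicit surrogate. The MP property will follow once this surrogate is shown to approximate $\opB \JB$ well as $k\to\infty$.

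First I would derive a per-step identity. Subtracting the update rules and using the resolvent identity $\JB(u_k^w) = u_k^w - \alpha \opB \JB(u_k^w)$ gives, for EAG and FEG,
\begin{align*}
z_{k+1} - w_{k+1} = (1-\beta_k)(z_k - w_k) - \alpha \bigl( \opB z_{k+1/2} - \opB \JB(u_k^w) \bigr) ,
\end{align*}
with an analogous expression for APS involving $v_{k+1}$ in place of $z_{k+1/2}$. Split $\opB z_{k+1/2} - \opB \JB(u_k^w)$ as $\opB z_{k+1/2} - \opB \JB(u_k^z)$ plus $\opB \JB(u_k^z) - \opB \JB(u_k^w)$. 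The second piece is bounded by $L(1-\beta_k)\|z_k - w_k\|$ from nonexpansiveness of $\JB$ and $L$-Lipschitzness of $\opB$; the first piece reduces to an algebraic combination of $\opB z_k$ and $\opB \JB(u_k^z) = \alpha^{-1}(u_k^z - \JB(u_k^z))$, both of size $\mathcal{O}(\|z_0-z_\star\|/k)$ by the known accelerated squared-residual rates of the forward algorithm and by firm nonexpansiveness of $\JB$. Together this yields a one-step recursion
\begin{align*}
\|z_{k+1} - w_{k+1}\| \le (1-\beta_k)\, r(\alpha L)\, \|z_k - w_k\| + \alpha\, \varepsilon_k, \qquad \varepsilon_k = \mathcal{O}\bigl(\|z_0-z_\star\|/k\bigr) ,
\end{align*}
where $r(\alpha L) \le 1$ holds on the full range $\alpha L < 1$ for FEG thanks to the $(1-\beta_k)$ factor sitting in front of $\opB z_k$ in its half-step, but only on a sub-range $\alpha L \le \eta$ for EAG and APS, because their half-steps lack that favorable cancellation. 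This is exactly the reason for the two distinct step-size regimes stated in the theorem.

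With $r \le 1$ in hand, multiplying the recursion by $(k+1)$ and iterating gives $(k+1)\|z_{k+1} - w_{k+1}\| \le \alpha \sum_{j=0}^{k}(j+1)\varepsilon_j$. The main obstacle is that plugging in $\varepsilon_j = \mathcal{O}(1/j)$ naively produces a spurious $\log k$ factor and only $\mathcal{O}(\log^2 k / k^2)$ for the squared distance. To remove the logarithm, I would work at the squared level from the outset and construct a Lyapunov functional $V_k = (k+1)^2 \|z_k - w_k\|^2 + \Theta_k$, where $\Theta_k$ is built from the existing energy functionals certifying the $\mathcal{O}(1/k^2)$ rates of OHM \cite{Lieder2021_convergence, Kim2021_accelerated} and of the respective forward algorithm \cite{YoonRyu2021_accelerated, LeeKim2021_fast, Tran-DinhLuo2021_halperntype}. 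Choosing $\Theta_k$ so that its decrement simultaneously absorbs the cross term $2(1-\beta_k)\alpha\langle z_k - w_k,\, \opB \JB(u_k^w) - \opB z_{k+1/2}\rangle$ and the quadratic term $\alpha^2 \|\opB \JB(u_k^w) - \opB z_{k+1/2}\|^2$ should make $V_k$ non-increasing; since $V_0 = \mathcal{O}(\|z_0-z_\star\|^2)$, this yields the claimed MP rate.

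Finally, the APS case requires one extra bookkeeping step because the per-step discrepancy involves the auxiliary sequence $v_{k+1}$: I would first establish $\|v_k - \JB(u_{k-1}^w)\| = \mathcal{O}(\|z_0-z_\star\|/k)$ by a parallel short induction and then proceed with the same Lyapunov telescoping. The FEG and EAG cases follow the identical template, with EAG incurring an extra Lipschitz factor in its half-step discrepancy that only stays bounded on the restricted range $\alpha \in (0, \eta/L)$.
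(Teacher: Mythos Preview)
Your one-step recursion at the norm level does not hold. With your splitting, the piece $\alpha\|\opB\JB(u_k^z)-\opB\JB(u_k^w)\|$ contributes an additional $\alpha L(1-\beta_k)\|z_k-w_k\|$, so the effective factor in front of $\|z_k-w_k\|$ is $(1-\beta_k)(1+\alpha L)=\tfrac{k}{k+1}(1+\alpha L)$, which exceeds $1$ for every fixed $\alpha L>0$ once $k$ is large. Thus $r(\alpha L)\le 1$ is false on the full range for FEG (and on any range for EAG/APS), and the telescoping argument as written diverges. This is not a log-factor artifact; it is a genuine blow-up, and adding an unspecified $\Theta_k$ at the squared level does not automatically fix it.

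The ingredient you are missing is that the cross term should be handled by \emph{monotonicity} of $\opB$, not by Lipschitz bounds. The paper expands $\|z_{k+1}-w_{k+1}\|^2$ directly, then rewrites $(1-\beta_k)(z_k-w_k)$ using the identity $z_{k+1/2}-w_{k+1}=(1-\beta_k)(z_k-w_k)-\alpha(1-\beta_k)\opB z_k+\alpha\opB w_{k+1}$ (FEG case) so that the cross term becomes $2\alpha\langle z_{k+1/2}-w_{k+1},\,\opB w_{k+1}-\opB z_{k+1/2}\rangle$ plus purely gradient-type terms. Monotonicity makes that first inner product nonpositive, and after completing a square in $\opB w_{k+1}$ one obtains the clean recursion
\[
(k+1)^2\|z_{k+1}-w_{k+1}\|^2\le k^2\|z_k-w_k\|^2+\alpha^2\|k\,\opB z_k-(k+1)\opB z_{k+1/2}\|^2,
\]
with no OHM quantities left on the right. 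The remaining task is a \emph{summability} lemma for the forward algorithm alone: $\sum_k\|k\,\opB z_k-(k+1)\opB z_{k+1/2}\|^2\le C\|z_0-z_\star\|^2$, proved via a Lyapunov for FEG (and analogous, more delicate Lyapunovs for EAG and APS, which is where the restricted range $\alpha<\eta/L$ actually enters). Your proposed combined Lyapunov $V_k=(k+1)^2\|z_k-w_k\|^2+\Theta_k$ mixing OHM and forward energies is unnecessary and would be hard to close; the paper decouples the two analyses completely.
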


\begin{proof}
Here we provide the proof for the FEG case and defer the proofs for EAG and APS to Appendix~\ref{sec:EAG_APS_MP_proof}.
Let $\{w_k\}_{k=0,1,\dots}$ be the OHM iterates and $\{z_k\}_{k=0,1,\dots}$ the FEG iterates, and assume $z_0=w_0$.
Then
\begin{align*}
    z_{k+1} - w_{k+1} & = \left( \beta_k z_0 + (1-\beta_k) z_k - \alpha \opB z_{k+1/2} \right) - \left( \beta_k w_0 + (1-\beta_k) w_k - \alpha \opB w_{k+1} \right) \\
    & = (1-\beta_k) (z_k - w_k) + \alpha \left( \opB w_{k+1} - \opB z_{k+1/2} \right)
\end{align*}
where we used $(\opI + \alpha \opB) (w_{k+1}) = w_{k+1/2}$ in the first equality and $z_0 = w_0$ in the second.
Thus,
\begin{align}
\label{eqn:FEG_MP_identity}
    \begin{aligned}
    \left\| z_{k+1} - w_{k+1} \right\|^2 & = (1-\beta_k)^2 \left\|z_k - w_k\right\|^2 + 2 \left\langle \alpha (1-\beta_k) (z_k - w_k), \opB w_{k+1} - \opB z_{k+1/2} \right\rangle \\
    & \quad\quad + \alpha^2 \left\| \opB w_{k+1} - \opB z_{k+1/2} \right\|^2 .
    \end{aligned}
\end{align}
Now use the similar identity
\begin{align*}
    z_{k+1/2} - w_{k+1} = (1-\beta_k) (z_k - w_k) - \alpha (1-\beta_k) \opB z_k + \alpha \opB w_{k+1}
\end{align*}
to rewrite the inner product term in~\eqref{eqn:FEG_MP_identity} as
\begin{align*}
    & 2 \left\langle \alpha (1-\beta_k) (z_k - w_k), \opB w_{k+1} - \opB z_{k+1/2} \right\rangle \\
    & = 2 \left\langle \alpha (z_{k+1/2} - w_{k+1}) + \alpha^2 (1-\beta_k) \opB z_k - \alpha^2 \opB w_{k+1} , \opB w_{k+1} - \opB z_{k+1/2} \right\rangle \\
    & \le 2\alpha^2 \left\langle (1-\beta_k) \opB z_k - \opB w_{k+1}, \opB w_{k+1} - \opB z_{k+1/2} \right\rangle , 
\end{align*}
where the last inequality follows from monotonicity of $\opB$.
Combining this with~\eqref{eqn:FEG_MP_identity}, we obtain
\begin{align*}
    \left\| z_{k+1} - w_{k+1} \right\|^2 & \le (1-\beta_k)^2 \left\|z_k - w_k\right\|^2 + 2\alpha^2 \left\langle (1-\beta_k) \opB z_k - \opB w_{k+1}, \opB w_{k+1} - \opB z_{k+1/2} \right\rangle \\
    & \quad + \alpha^2 \left\| \opB w_{k+1} - \opB z_{k+1/2} \right\|^2 \\
    & = (1-\beta_k)^2 \left\|z_k - w_k\right\|^2 \underbrace{ -\, \alpha^2 \left\| \opB w_{k+1} \right\|^2  + 2\alpha^2 (1-\beta_k) \langle \opB z_k, \opB w_{k+1} \rangle}_{\le \alpha^2 (1-\beta_k)^2 \|\opB z_k\|^2} \\
    & \quad - 2\alpha^2 (1-\beta_k) \langle \opB z_k, \opB z_{k+1/2} \rangle + \alpha^2 \|\opB z_{k+1/2}\|^2 .
\end{align*}
Plugging in $\beta_k = \frac{1}{k+1}$ and multiplying both sides by $(k+1)^2$, we obtain
\begin{align*}
    (k+1)^2 \left\| z_{k+1} - w_{k+1} \right\|^2 \le & \, k^2 \left\| z_k - w_k \right\|^2 + \alpha^2 \|k \opB z_k - (k+1) \opB z_{k+1/2}\|^2 .
\end{align*}
The conclusion then follows from the following \cref{lem:FEG_summability}, whose proof is deferred to \Cref{sec:FEG_summability_lemma_proof}.
\end{proof}

\begin{lemma}
\label{lem:FEG_summability}
Let $\opB\colon \reals^d \to \reals^d$ be monotone and $L$-Lipschitz and assume $z_\star \in\zer\opB$ exists.
Let $\{z_k\}_{k=0,1,\dots}$ be the iterates of FEG with $\beta_k = \frac{1}{k+1}$.
For $\alpha \in \left(0, \frac{1}{L}\right)$, 
\begin{align*}
    \sum_{k=0}^\infty \|k \opB z_k - (k+1) \opB z_{k+1/2}\|^2 \le \frac{1}{\alpha^2 (1-\alpha^2 L^2)} \|z_0 - z_\star\|^2 < \infty .
\end{align*}
\end{lemma}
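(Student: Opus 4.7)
The plan is to build a nonnegative Lyapunov sequence $V_k$ adapted to the FEG iterates satisfying $V_k - V_{k+1} \geq \|k\opB z_k - (k+1)\opB z_{k+1/2}\|^2$ for all $k \geq 0$ and $V_0 \leq \frac{1}{\alpha^2(1-\alpha^2 L^2)}\|z_0 - z_\star\|^2$. Telescoping from $k=0$ to $\infty$ and using $V_k \geq 0$ then immediately yields the claim.

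For the choice of $V_k$ I would take a rescaling of the Lee--Kim Lyapunov function used to prove the $\mathcal{O}(1/k^2)$ rate for FEG, an expression of the form
\[
V_k = \frac{1}{\alpha^2(1-\alpha^2 L^2)}\Bigl(a_k \|\opB z_k\|^2 + 2 b_k \langle \opB z_k, z_k - z_0\rangle + c_k \|z_k - z_\star\|^2\Bigr),
\]
with $a_k$ quadratic in $k$, $b_k$ linear in $k$, and $c_k$ a constant, tuned so that $V_0 = \frac{1}{\alpha^2(1-\alpha^2 L^2)}\|z_0 - z_\star\|^2$ and $V_k \geq 0$ throughout. To compute $V_k - V_{k+1}$ I would use the FEG identities $z_{k+1/2} - z_k = \beta_k(z_0 - z_k) - (1-\beta_k)\alpha\opB z_k$ and $z_{k+1} - z_{k+1/2} = \alpha\bigl((1-\beta_k)\opB z_k - \opB z_{k+1/2}\bigr)$ to expand $\|z_{k+1} - z_\star\|^2$ and $\langle \opB z_{k+1}, z_{k+1} - z_0\rangle$, then invoke monotonicity of $\opB$ in the form $\langle \opB z_{k+1/2}, z_{k+1/2} - z_\star\rangle \geq 0$ (using $\opB z_\star = 0$) and the Lipschitz bound $\|\opB z_{k+1/2} - \opB z_k\|^2 \leq L^2 \|z_{k+1/2} - z_k\|^2$. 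The Lipschitz step is precisely what introduces the $1 - \alpha^2 L^2$ factor in the prefactor.

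The main obstacle is the algebraic bookkeeping: after expanding $V_k - V_{k+1}$ via the FEG updates, the coefficient schedules $(a_k, b_k, c_k)$ must be tuned so that all first-order cross terms cancel and the remaining quadratic form in $(\opB z_k, \opB z_{k+1/2})$ reduces to exactly $\|k\opB z_k - (k+1)\opB z_{k+1/2}\|^2$, matching the error term that appeared in the telescoping at the end of the proof of Theorem~\ref{thm:EAG_FEG_Halpern_MP}. The hypothesis $\alpha L < 1$ is exactly what makes this residual quadratic form nonnegative; at the boundary $\alpha L = 1$ the argument degenerates, consistent with the $\bigl(\alpha^2(1-\alpha^2 L^2)\bigr)^{-1}$ prefactor blowing up. An equivalent purely calculational route is to iterate the bound on $\|z_k - z_\star\|^2$ using the FEG recursion, apply the monotonicity and Lipschitz inequalities with the same weights, and rearrange to isolate the sum directly; the Lyapunov viewpoint simply organizes this bookkeeping transparently.
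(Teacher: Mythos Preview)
Your high-level plan---telescope a nonnegative Lyapunov $V_k$ whose one-step drop dominates $\|k\opB z_k-(k+1)\opB z_{k+1/2}\|^2$---is exactly what the paper does, and the shape $a_k\|\opB z_k\|^2+b_k\langle\opB z_k,z_k-z_0\rangle+\text{const}$ with $a_k\propto k^2$, $b_k\propto k$ is spot on. But two of the three concrete ingredients you name are not the ones that make the computation close.

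First, the paper's third term is the \emph{constant} $\frac{1}{2\alpha}\|z_0-z_\star\|^2$, not $c_k\|z_k-z_\star\|^2$; this matters because it drops out of $V_k-V_{k+1}$ and is only there to certify $V_k\ge 0$ via $\langle\opB z_k,z_k-z_\star\rangle\ge 0$ and Young. Second---and this is the real issue---the monotonicity and Lipschitz inequalities used in the decrease step are different from yours. The paper applies monotonicity between the \emph{consecutive iterates} $z_k$ and $z_{k+1}$ (in the form $\langle z_k-z_{k+1},\opB z_k-\opB z_{k+1}\rangle\ge 0$), not at $z_{k+1/2}$ versus $z_\star$, and it applies Lipschitzness between $z_{k+1/2}$ and $z_{k+1}$, not between $z_{k+1/2}$ and $z_k$. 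The reason is structural: from the FEG updates one has $z_{k+1/2}-z_{k+1}=\alpha\bigl(\opB z_{k+1/2}-(1-\beta_k)\opB z_k\bigr)$, a pure combination of gradients, so $L^2\|z_{k+1/2}-z_{k+1}\|^2-\|\opB z_{k+1/2}-\opB z_{k+1}\|^2\ge 0$ produces exactly the quadratic form in $(\opB z_k,\opB z_{k+1/2},\opB z_{k+1})$ needed to cancel the $\opB z_{k+1}$ terms and leave $\frac{\alpha(1-\alpha^2L^2)}{2}\|k\opB z_k-(k+1)\opB z_{k+1/2}\|^2$. By contrast, $z_{k+1/2}-z_k=\beta_k(z_0-z_k)-(1-\beta_k)\alpha\opB z_k$ carries the anchoring displacement $z_0-z_k$, so your Lipschitz step would inject terms like $\|z_0-z_k\|^2$ that have no counterpart to absorb them. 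Swap in the pair $(z_{k+1/2},z_{k+1})$ for Lipschitz and $(z_k,z_{k+1})$ for monotonicity and your outline becomes the paper's proof verbatim.
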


As it is well known \cite[Theorem~30.1]{BauschkeCombettes2017_convex} that OHM converges to 
\[
\proj_{\mathrm{Fix} \opT}(z_0) = \underset{z\in\fix\opT}{\argmin} \|z-z_0\|,
\]
\cref{thm:EAG_FEG_Halpern_MP} immediately implies the point convergence of EAG, FEG, and APS.

\begin{corollary}
\label{cor:point-conv}
In the setup of \cref{thm:EAG_FEG_Halpern_MP}, iterates of EAG, FEG, and APS converge to $\proj_{\zer \opB}(z_0)$.
\end{corollary}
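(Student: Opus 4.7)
The proof is a short triangle-inequality argument combining Theorem~\ref{thm:EAG_FEG_Halpern_MP} with the cited convergence of OHM. Let $\{z_k\}$ denote the iterates of EAG, FEG, or APS (with step size in the appropriate range specified by Theorem~\ref{thm:EAG_FEG_Halpern_MP}), and let $\{w_k\}$ denote the OHM iterates applied to $\opT = \JB$, sharing the anchor $w_0 = z_0$. Since $\opB$ is monotone and $L$-Lipschitz (hence continuous and single-valued, so maximally monotone by \cite[Corollary~20.28]{BauschkeCombettes2017_convex}), $\JB$ is a nonexpansive operator with full domain, and $z_\star \in \zer \opB = \fix \JB$ guarantees that a fixed point exists. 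The cited classical result \cite[Theorem~30.1]{BauschkeCombettes2017_convex} thus applies and yields
\[
    w_k \to \proj_{\fix \JB}(z_0) = \proj_{\zer \opB}(z_0) \qquad \text{as } k \to \infty.
\]

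By Theorem~\ref{thm:EAG_FEG_Halpern_MP}, we have $\|z_k - w_k\|^2 = \mathcal{O}(\|z_0 - z_\star\|^2/k^2)$, so in particular $\|z_k - w_k\| \to 0$. The triangle inequality
\[
    \|z_k - \proj_{\zer \opB}(z_0)\| \le \|z_k - w_k\| + \|w_k - \proj_{\zer \opB}(z_0)\|
\]
shows that both terms on the right vanish, so $z_k \to \proj_{\zer \opB}(z_0)$, completing the proof.

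There is no real obstacle here; the entire content is already packaged into Theorem~\ref{thm:EAG_FEG_Halpern_MP} and the cited convergence of OHM. The only minor things to check are that the step-size hypotheses of Theorem~\ref{thm:EAG_FEG_Halpern_MP} automatically cover the three algorithms (EAG, FEG, APS) and that the identification $\zer \opB = \fix \JB$, already recorded in Section~\ref{sec:definitions}, lets us translate OHM's limit into the desired minimum-norm solution to the monotone inclusion problem.
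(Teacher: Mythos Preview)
Your proof is correct and follows essentially the same approach as the paper: the paper simply observes that since OHM is known to converge to $\proj_{\fix\opT}(z_0)$ \cite[Theorem~30.1]{BauschkeCombettes2017_convex}, Theorem~\ref{thm:EAG_FEG_Halpern_MP} immediately yields the conclusion, which is exactly your triangle-inequality argument made explicit.
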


\section{Fastest rate for unconstrained smooth strongly-convex-strongly-concave minimax optimization with respect to gradient norm}
\label{sec:scsc_minimax}
In \Cref{sec:smooth_minimax}, we identified that prior accelerated minimax algorithms approximate, in the MP sense, the optimal proximal algorithm OHM. In this section, we design a novel algorithm to approximate, in the MP sense, the optimal proximal algorithm OC-Halpern \cite{ParkRyu2022_exact} and thereby achieve the state-of-the-art rate\footnote{This word ``fastest'' in the section title means ``fastest known'', and does not mean the rate cannot be improved further.} for minimax problems with unconstrained $L$-smooth $\mu$-strongly-convex-strongly-concave $\lagrange$ (so $\sop\lagrange \colon \reals^d \to \reals^d$ is $L$-Lipschitz and $\mu$-strongly monotone). Later in \Cref{sec:composite_minimax}, we repeat the strategy of designing an algorithm to approximate, in the MP sense, a known accelerated proximal algorithm.

We design the novel algorithm \emph{\textbf{S}trongly \textbf{M}onotone \textbf{E}xtra \textbf{A}nchored \textbf{G}radient\textbf{\texttt{+}}}\footnote{The ``\texttt{+}'' follows the nomenclature of \cite{DiakonikolasDaskalakisJordan2021_efficient}, where the two-time-scale EG was named EG\texttt{+}.} 
\begin{align}
\label{eqn:SM-EAG+}
\begin{aligned}
    z_{k+1/2} & = \beta_k z_0 + (1-\beta_k) z_k - \eta_k \alpha \opB z_k \\
    z_{k+1}   & = \beta_k z_0 + (1-\beta_k) z_k - \alpha \opB z_{k+1/2} ,
\end{aligned}
\tag{SM-EAG\texttt{+}}
\end{align}
where $\opB=\sop\lagrange$, $\beta_k = \frac{1}{\sum_{j=0}^k (1+2\alpha\mu)^j}$, $\eta_k = \frac{1-\beta_k}{1+2\alpha\mu}$, and $0 < \alpha \le \frac{\sqrt{L^2 + \mu^2} + \mu}{L^2}$, to approximate OC-Halpern in the MP sense. \ref{eqn:SM-EAG+} inherits the accelerated rate of OC-Halpern and achieves the fastest known rate in the setup with respect to the gradient norm, improving upon the prior rates of EG and OG by factors of 8 and 4, respectively.
(OC-Halpern is not a forward algorithm as it uses the proximal operator $\JB$, while \ref{eqn:SM-EAG+} is a forward algorithm using evaluations of $\opB$.)

\subsection{Preliminaries: Proximal algorithm with exact optimal complexity}
For the fixed-point problem \eqref{eqn:fixed_point_problem} with a $\gamma^{-1}$-contractive $\opT$, the recently presented \emph{OC-Halpern} \cite[Corollary~3.3, Theorem~4.1]{ParkRyu2022_exact},
which has the same form~\eqref{eqn:OHM} as OHM but with $\beta_k = \left(\sum_{j=0}^k \gamma^{2j}\right)^{-1}$, 
achieves the exact optimal accelerated rate 
of $\|w_{k+1/2} - \opT w_{k+1/2}\|^2 \le \left( 1 + \gamma^{-1} \right)^2 \left(\sum_{j=0}^k \gamma^j \right)^{-2} \|w_0 - w_\star\|^2$, where $w_\star \in \fix\opB$.
We consider OC-Halpern with $\opT = \JB$, which is contractive if $\opB$ is maximally monotone and $\mu$-strongly monotone \cite[Proposition~23.8, 23.13]{BauschkeCombettes2017_convex}.

\subsection{MP property and convergence analyses of SM-EAG\texttt{+}}
\ref{eqn:SM-EAG+} approximates OC-Halpern in the MP sense. 

\begin{theorem}[\ref{eqn:SM-EAG+}$\approx$OC-Halpern]
\label{thm:sm-EAG_OC-Halpern_MP}
Let $\opB\colon \reals^d \to \reals^d$ be $\mu$-strongly monotone and $L$-Lipschitz with $0 < \mu \le L$.
Let $z_\star$ be the zero of $\opB$.
If $\alpha \in \big( 0, \frac{\sqrt{L^2 + \mu^2} + \mu}{L^2} \big)$, then for any $\epsilon \in (0, 1)$, \ref{eqn:SM-EAG+} and OC-Halpern with $\opT = \JB$ and $\beta_k = \frac{1}{\sum_{j=0}^k (1+2\alpha\mu)^j}$ are $\mathcal{O}\left(\frac{\|z_0-z_\star\|^2}{(1+2\alpha\mu(1-\epsilon))^{k}}\right)$-MP.
\end{theorem}

To clarify, in \cref{thm:sm-EAG_OC-Halpern_MP}, we view $\JB$ as a $\gamma^{-1}$-contractive operator with $\gamma^{-1} = \frac{1}{\sqrt{1+2\alpha\mu}} > \frac{1}{1+\alpha\mu}$, where $\frac{1}{1+\alpha\mu}$ is the tight contraction factor for $\JB$ \cite[Proposition~23.13]{BauschkeCombettes2017_convex}, and then apply OC-Halpern. This slack, which is negligible in the regime $L \gg \mu$, is necessary for proving the MP result.

\ref{eqn:SM-EAG+} inherits the convergence rate of OC-Halpern, 
since the paths of OC-Halpern and \ref{eqn:SM-EAG+} merge at rate $\mathcal{O}\left(\frac{\|z_0-z_\star\|^2}{(1+2\alpha\mu(1-\epsilon))^{k}}\right)$ for any $\epsilon > 0$, arbitrarily close to the order of convergence $\|\alpha \opB w_{k+1}\|^2 = \mathcal{O}\left(\frac{\|w_0 - w_\star\|^2}{(1+2\alpha\mu)^k}\right)$ of OC-Halpern \cite{ParkRyu2022_exact}.
On the other hand, EG and OG can be viewed as approximations of the proximal point method (PPM) \cite{MokhtariOzdaglarPattathil2020_unified}, which is slower than OC-Halpern.
Additionally, it is unclear whether EG and OG exhibit an MP property to PPM fast enough to preserve PPM's convergence rate since EG and OG have no anchoring mechanism inducing the MP property.

\begin{theorem}[Fast rate of \ref{eqn:SM-EAG+}]
\label{thm:sm-EAG}
Let $\opB\colon \reals^d \to \reals^d$ be $\mu$-strongly monotone and $L$-Lipschitz with $0 < \mu \le L$.
Let $z_\star$ be the zero of $\opB$.
For $\alpha \in \big( 0, \frac{\sqrt{L^2 + \mu^2} + \mu}{L^2} \big]$, SM-EAG\texttt{+} exhibits the rate
\begin{align*}
    \|\opB z_k\|^2 \le \frac{\left(\sqrt{1+2\alpha\mu} + 1\right)^2}{\alpha^2 \left( \sum_{j=0}^{k-1} (1+2\alpha\mu)^{\frac{j}{2}} \right)^2} \|z_0 - z_\star\|^2 ,
    \qquad\text{for } k=1,2,\dots.
\end{align*}
\end{theorem}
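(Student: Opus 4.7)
My plan is to follow the potential-function template used in Lemma~\ref{lem:FEG_summability} and construct a Lyapunov function tailored directly to \ref{eqn:SM-EAG+}, rather than attempting to transfer the OC-Halpern rate through the MP comparison of Lemma~\ref{lem:sm-EAG_OC-Halpern_MP}. The MP bound carries an $\varepsilon$-slack in its geometric decay, so a triangle-inequality argument would only yield a rate of the form $\mathcal{O}(1/(1+2\alpha\mu(1-\varepsilon))^k)$, strictly weaker than the claimed bound. The coefficients $\beta_k=1/\sum_{j=0}^k(1+2\alpha\mu)^j$ and $\eta_k=(1-\beta_k)/(1+2\alpha\mu)$ are chosen precisely so that a direct energy argument succeeds with the sharp constant.

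Guided by the FEG Lyapunov $V_k=\tfrac{\alpha k^2}{2}\|\opB z_k\|^2+k\langle\opB z_k,z_k-z_0\rangle+\tfrac{1}{2\alpha}\|z_0-z_\star\|^2$ and the geometric weighting that appears in OC-Halpern, I would try a potential of the form
\begin{align*}
V_k \;=\; \frac{\alpha\, A_k^{2}}{2(1+\sqrt{1+2\alpha\mu})^{2}}\,\|\opB z_k\|^2 \;+\; \frac{A_k}{1+\sqrt{1+2\alpha\mu}}\,\langle \opB z_k,\,z_k - z_0\rangle \;+\; \frac{1}{2\alpha}\|z_0 - z_\star\|^2,
\end{align*}
where $A_k=\sum_{j=0}^{k-1}(1+2\alpha\mu)^{j/2}$ matches the denominator of the target rate; the exact constants may need to be retuned during the calculation. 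The three properties to verify are: (a) $V_0=\tfrac{1}{2\alpha}\|z_0-z_\star\|^2$, which is immediate since $A_0=0$; (b) $V_k\ge \tfrac{\alpha^2 A_k^{2}}{(1+\sqrt{1+2\alpha\mu})^{2}}\|\opB z_k\|^2 \cdot c$ for some $c>0$, obtained by completing the square in the first two terms and absorbing the negative cross term via $\mu$-strong monotonicity $\langle \opB z_k,z_k-z_\star\rangle\ge\mu\|z_k-z_\star\|^2$; and (c) the monotone-decrease relation $V_k\ge V_{k+1}$. Combining (a)--(c) then yields $\|\opB z_k\|^2\le V_0/(\text{weight on }\|\opB z_k\|^2)$, which is the stated bound up to constant verification.

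The main obstacle is (c). Substituting the \ref{eqn:SM-EAG+} update into $V_{k+1}-V_k$ produces an expression that, after grouping, should take the form of a quadratic in the two vector quantities $\opB z_{k+1/2}-\opB z_k$ and $z_{k+1/2}-z_k$, plus a multiple of $\langle\opB z_{k+1/2}-\opB z_k,z_{k+1/2}-z_k\rangle$. Using $\mu$-strong monotonicity on the inner-product term and the Lipschitz bound $\|\opB z_{k+1/2}-\opB z_k\|\le L\|z_{k+1/2}-z_k\|$ on the quadratic error, this quadratic form should be negative semidefinite precisely when $\alpha L^2\le 2\mu+2\alpha\mu^2/\alpha$, i.e.\ when $\alpha \le (\sqrt{L^2+\mu^2}+\mu)/L^2$, matching the step-size bound in the statement exactly as $\alpha<1/L$ arises in the FEG analysis. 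Nailing down the algebra so that both the decrease inequality \emph{and} the target constant $(1+\sqrt{1+2\alpha\mu})^{2}/\alpha^2$ pop out simultaneously—rather than a slightly looser factor—will be the delicate bookkeeping step, and may require a subsequent rescaling of the cross-term coefficient or the inclusion of a small auxiliary $\|z_k-z_\star\|^2$ term in $V_k$.
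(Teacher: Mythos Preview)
Your high-level decision is exactly right: the paper does \emph{not} derive Theorem~\ref{thm:sm-EAG} by transferring the OC-Halpern rate through Lemma~\ref{lem:sm-EAG_OC-Halpern_MP}, but by a direct Lyapunov argument (Lemma~\ref{lem:sm-EAG_Lyapunov}). However, three concrete features of your proposed potential will prevent the calculation from closing.

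First, the paper's Lyapunov contains an additional quadratic term you are missing: it uses $q_k\langle \opB z_k-\mu(z_k-z_0),\,z_k-z_0\rangle$, i.e.\ there is an extra $-\mu q_k\|z_k-z_0\|^2$. This term is not cosmetic: in the decrease step one subtracts $\frac{q_k}{\beta_k}\langle z_k-z_{k+1},\opB z_k-\opB z_{k+1}-\mu(z_k-z_{k+1})\rangle\ge 0$ (strong monotonicity), and the resulting $+\frac{\mu q_k}{\beta_k}\|z_k-z_{k+1}\|^2$ must combine with the $-\mu q_k\|z_k-z_0\|^2$ pieces from $V_k$ and $V_{k+1}$ to cancel all $z_k-z_0$ dependence. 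Without the $-\mu\|z_k-z_0\|^2$ term in $V_k$ those terms survive and the inequality $V_k\ge V_{k+1}$ fails for geometric $\beta_k$. Second, your coefficient choice $\tilde q_k\propto A_k=\sum_{j<k}(1+2\alpha\mu)^{j/2}$ does not satisfy the recurrence $\tilde q_{k+1}=\tilde q_k/(1-\beta_k)$ that the cancellation of $\langle\cdot,z_k-z_0\rangle$ terms requires; the paper's $q_k=\frac{1}{(1+2\alpha\mu)^{k-1}\beta_{k-1}}$ is \emph{bounded} in $k$, while $p_k=\frac{\alpha}{2}(1+2\alpha\mu)^{k-1}q_k^2$ carries the geometric growth, so $p_k/q_k^2$ is not constant as in your FEG-style ansatz. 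Third, your step~(b) cannot yield $V_k\ge c\,p_k\|\opB z_k\|^2$ with $c>0$: completing the square together with $\langle\opB z_k,z_k-z_\star\rangle\ge 0$ and Young's inequality only gives $V_k\ge 0$, exhausting the $\frac{1}{2\alpha}\|z_0-z_\star\|^2$ budget. The paper instead introduces a free parameter $\lambda>q_k$, proves the parametric bound $\big(p_k-\tfrac{\lambda-q_k}{4\mu}\big)\|\opB z_k\|^2\le \tfrac{q_k\lambda\mu}{\lambda-q_k}\|z_0-z_\star\|^2$, and then minimizes over $\lambda$ at $\lambda_\star=\sqrt{q_k(q_k+4\mu p_k)}$; this optimization is what produces the exact constant $(\sqrt{1+2\alpha\mu}+1)^2/\alpha^2$ and is not recoverable from a single fixed Lyapunov lower bound.
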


With the choice $\alpha = \frac{\sqrt{L^2 + \mu^2} + \mu}{L^2}$, \cref{thm:sm-EAG} implies
\begin{align*}
    \|\opB z_k\|^2 = \mathcal{O} \left( e^{-2k \frac{\mu}{L}} L^2 \|z_0 - z_\star\|^2 \right) 
\end{align*}
when $L \gg \mu$.
Therefore, \ref{eqn:SM-EAG+} requires roughly $\frac{L}{\mu} \log \frac{L\|z_0-z_\star\|}{\epsilon}$ iterations to achieve $\|\opB z_k\| \le \epsilon$.
On the other hand, simultaneous gradient descent-ascent has the rate \cite[Proposition~26.16]{BauschkeCombettes2017_convex}
\begin{align*}
    \|\opB z_k\|^2 \le L^2\|z_k - z_\star\|^2 = \mathcal{O} \left( e^{-k\frac{\mu^2}{L^2}} L^2 \|z_0-z_\star\|^2 \right)
\end{align*}
when $L \gg \mu$, and therefore requires $\frac{2L^2}{\mu^2} \log \frac{L\|z_0-z_\star\|}{\epsilon} \gg \frac{L}{\mu} \log \frac{L\|z_0-z_\star\|}{\epsilon}$ iterations to achieve $\|\opB z_k\| \le \epsilon$.
EG and OG has the rate \cite{MokhtariOzdaglarPattathil2020_unified, AzizianMitliagkasLacoste-JulienGidel2020_tight}:
\begin{align*}
    \|\opB z_k\|^2 \le L^2\|z_k - z_\star\|^2 = \mathcal{O} \left( e^{-k\frac{\mu}{4L}} L^2\|z_0 - z_\star\|^2 \right) ,
\end{align*}
when $L \gg \mu$, and therefore requires $\frac{8L}{\mu} \log \frac{L\|z_0-z_\star\|}{\epsilon}$ iterations to achieve $\|\opB z_k\| \le \epsilon$.
Thus, \ref{eqn:SM-EAG+} is $8\times$ faster than EG and $4\times$ faster than OG (since OG makes a single call to the gradient oracle at each iteration).

Notably, when $\mu=0$, \ref{eqn:SM-EAG+} coincides with FEG \cite{LeeKim2021_fast}, which has the fastest known rate of $\|\opB z_k\|^2 \le \frac{4L^2 \|z_0-z_\star\|^2}{k^2}$ when $\opB$ is (non-strongly) monotone and $L$-Lipschitz.
This rate is precisely recovered by \cref{thm:sm-EAG} when $\mu = 0$, with $\alpha = \frac{1}{L}$.

Although OC-Halpern is an algorithm designed for exact optimal convergence with respect to the fixed-point residual norm $\|w_{k+1/2} - \opT w_{k+1/2}\|^2$, its iterates also converge linearly at a fast rate comparable to PPM: $\|w_k - w_\star\|^2 = \cO \left(\gamma^{-2k} \|w_0 - w_\star\|^2\right)$.
The proof of this fact is simple but new; we provide it in \cref{subsec:oc-halpern-linear-conv}.
Together with \cref{thm:sm-EAG_OC-Halpern_MP}, this implies that the point convergence of \smeag~also occurs at a fast rate of $\|z_k - z_\star\|^2 = \cO \left(e^{-2k \frac{\mu}{L}} \|z_0 - z_\star\|^2 \right)$ when $L \gg \mu$ and $\alpha$ is chosen appropriately.

\begin{theorem}
\label{thm:sm-eag-fast-point-convergence}
Let $\opB\colon \reals^d \to \reals^d$ be $\mu$-strongly monotone and $L$-Lipschitz with $0 < \mu \le L$. Let $z_\star$ be the zero of $\opB$.
For $\alpha \in \big( 0, \frac{\sqrt{L^2 + \mu^2} + \mu}{L^2} \big)$, \smeag~converges with the rate
\begin{align}
\label{eqn:smeag-dist-to-sol-rate}
    \|z_k - z_\star\|^2 & \le 2 \left[ \frac{(1 + \sqrt{2\alpha\mu})^2}{(1+2\alpha\mu)^{k+1}} + \left( \frac{\left( 1 + 2\alpha \mu \left(\epsilon^{-1} - 1\right) \right)}{(1+2\alpha\mu(1-\epsilon))^{k+1}} \frac{1+2\alpha\mu}{1+2\alpha\mu-\alpha^2 L^2} \right) \right] \|z_0 - z_\star\|^2 \\
    & = \cO \left( \frac{\|z_0 - z_\star\|^2}{(1+2\alpha\mu(1-\epsilon))^k} \right) \nonumber
\end{align}
where $\epsilon \in (0, 1)$ can be chosen arbitrarily.
\end{theorem}

\subsection{Proof of \texorpdfstring{\cref{thm:sm-EAG_OC-Halpern_MP}}{Theorem 4.1}}
Denote the iterates of OC-Halpern by $\{w_k\}_{k=0,1,\dots}$ and \ref{eqn:SM-EAG+} iterates by $\{z_k\}_{k=0,1,\dots}$.
Similarly as in \Cref{sec:smooth_minimax}, we have
\begin{align*}
    \left\| z_{k+1} - w_{k+1} \right\|^2 & = (1-\beta_k)^2 \left\|z_k - w_k\right\|^2 + 2 \left\langle \alpha (1-\beta_k) (z_k - w_k), \opB w_{k+1} - \opB z_{k+1/2} \right\rangle \\
    & \quad + \alpha^2 \left\| \opB w_{k+1} - \opB z_{k+1/2} \right\|^2 .
\end{align*}
Using $z_{k+1/2} - w_{k+1} = (1-\beta_k) (z_k-w_k) - \eta_k \alpha \opB z_k + \alpha \opB w_{k+1}$, we get
\begin{align}
    & 2 \left\langle \alpha (1-\beta_k) (z_k - w_k), \opB w_{k+1} - \opB z_{k+1/2} \right\rangle \nonumber \\
    & = 2 \left\langle \alpha(z_{k+1/2} - w_{k+1}) + \alpha^2 (\eta_k \opB z_k - \opB w_{k+1}) , \opB w_{k+1} - \opB z_{k+1/2} \right\rangle \nonumber \\
    & \le -2\alpha\mu \|z_{k+1/2} - w_{k+1}\|^2 + 2\alpha^2 \left\langle \eta_k \opB z_k - \opB w_{k+1} , \opB w_{k+1} - \opB z_{k+1/2} \right\rangle \label{eqn:sm-EAG_Halpern_MP_strong_monotonicity}
\end{align}
where the last line uses $\mu$-strong monotonicity of $\opB$.
Note that for any $\epsilon \in (0,1)$ and $x,y\in\reals^d$, we have
\begin{align*}
    \epsilon \|x\|^2 + \frac{1}{\epsilon} \|y\|^2 \ge 2 \langle x, y \rangle \implies \|x-y\|^2 \ge (1-\epsilon)\|x\|^2 - \left( \frac{1}{\epsilon} - 1\right) \|y\|^2 .
\end{align*}
Using the last inequality with $x = z_{k+1} - w_{k+1}$ and $y = z_{k+1} - z_{k+1/2} = \alpha(\eta_k \opB z_k - \opB z_{k+1/2})$, we can upper-bound~\eqref{eqn:sm-EAG_Halpern_MP_strong_monotonicity}, so that
\begin{align*}
    & \|z_{k+1} - w_{k+1}\|^2 \\
    & \le (1-\beta_k)^2 \|z_k - w_k\|^2 -2\alpha\mu \left( (1-\epsilon) \|z_{k+1} - w_{k+1}\|^2 - \left( \frac{1}{\epsilon} - 1\right) \alpha^2 \|\eta_k \opB z_k - \opB z_{k+1/2}\|^2 \right) \\
    & \quad + 2\alpha^2 \left\langle \eta_k \opB z_k - \opB w_{k+1} , \opB w_{k+1} - \opB z_{k+1/2} \right\rangle + \alpha^2 \left\| \opB w_{k+1} - \opB z_{k+1/2} \right\|^2 .
\end{align*}
Rearranging the last inequality, we obtain
\begin{align*}
    & (1 + 2\alpha\mu (1-\epsilon)) \|z_{k+1} - w_{k+1}\|^2 \\
    & \le (1-\beta_k)^2 \|z_k - w_k\|^2 + 2\alpha \mu \left( \frac{1}{\epsilon} - 1\right) \alpha^2 \|\eta_k \opB z_k - \opB z_{k+1/2}\|^2 \\
    & \quad \underbrace{-\alpha^2 \|\opB w_{k+1}\|^2 + 2\eta_k \alpha^2 \langle \opB w_{k+1}, \opB z_k \rangle}_{\le \eta_k^2 \alpha^2 \|\opB z_k\|^2} - 2\eta_k \alpha^2 \langle \opB z_k, \opB z_{k+1/2} \rangle + \alpha^2 \|\opB z_{k+1/2}\|^2 \\
    & \le (1-\beta_k)^2 \|z_k - w_k\|^2 + \left( 1 + 2\alpha \mu \left( \frac{1}{\epsilon} - 1\right) \right) \alpha^2 \|\eta_k \opB z_k - \opB z_{k+1/2}\|^2 .
\end{align*}
Multiplying $(1+2\alpha\mu(1-\epsilon))^k$ throughout the above equation and upper-bounding $(1-\beta_k)^2$ by $1$ we get
\begin{align*}
    & (1+2\alpha\mu(1-\epsilon))^{k+1} \|z_{k+1} - w_{k+1}\|^2 \\
    & \quad \quad \le (1+2\alpha\mu(1-\epsilon))^k \|z_k - w_k\|^2 + (1+2\alpha\mu)^k \left( 1 + 2\alpha \mu \left( \frac{1}{\epsilon} - 1\right) \right) \alpha^2 \|\eta_k \opB z_k - \opB z_{k+1/2}\|^2 \\
    & \quad \quad \le \cdots \le \left( 1 + 2\alpha \mu \left( \frac{1}{\epsilon} - 1\right) \right) \alpha^2 \sum_{j=0}^k (1+2\alpha\mu)^j \|\eta_j \opB z_j - \opB z_{j+1/2}\|^2 .
\end{align*}
Thus, provided that the following \cref{lem:sm-EAG_Lyapunov} (in particular~\eqref{eqn:sm-EAG_summability}) holds, we conclude
\begin{align*}
    (1+2\alpha\mu(1-\epsilon))^{k+1} \|z_{k+1} - w_{k+1}\|^2 \le \left( 1 + 2\alpha \mu \left( \frac{1}{\epsilon} - 1\right) \right) \frac{1+2\alpha\mu}{1+2\alpha\mu-\alpha^2 L^2} \|z_0 - z_\star\|^2 ,
\end{align*}
i.e., the two algorithms are $\mathcal{O}\left(\frac{\|z_0-z_\star\|^2}{(1+2\alpha\mu(1-\epsilon))^k}\right) = \mathcal{O}\left( e^{-2k\alpha\mu (1-\epsilon)}\|z_0-z_\star\|^2 \right)$-MP.

\begin{lemma}
\label{lem:sm-EAG_Lyapunov}
Let $\opB\colon \reals^d \to \reals^d$ be a $\mu$-strongly monotone, $L$-Lipschitz operator ($0 < \mu \le L$), and let
$\{z_k\}_{k=0,1,\dots}$ be the iterates of \ref{eqn:SM-EAG+} with $\beta_k = \frac{1}{\sum_{j=0}^k (1+2\alpha\mu)^j}$.
Then, the quantities
\begin{align*}
    V_k = p_k \|\opB z_k\|^2 + q_k \langle \opB z_k - \mu (z_k - z_0), z_k - z_0 \rangle + \left( \frac{1}{2\alpha} + \mu \right) \|z_0-z_\star\|^2
\end{align*}
with $p_0 = q_0 = 0$ and $q_k = \frac{1}{(1+2\alpha\mu)^{k-1}} \frac{1}{\beta_{k-1}}$, $p_k = \frac{\eta_k \alpha q_k}{2 \beta_k}$ for $k=1,2,\dots$ satisfy $V_k \ge 0$, $V_0 - V_1 \ge \frac{\alpha (1+2\alpha\mu-\alpha^2 L^2)}{2}\|\opB z_0\|^2$, and
\begin{align*}
    V_k - V_{k+1} \ge \frac{\alpha (1 + 2\alpha\mu - \alpha^2 L^2) q_k}{2\beta_k (1-\beta_k)} \|\eta_k \opB z_k - \opB z_{k+1/2}\|^2
\end{align*}
for $k=1,2,\dots$.
In particular, if $\alpha \in \big( 0, \frac{\sqrt{L^2 + \mu^2} + \mu}{L^2} \big]$, $V_k$ is a Lyapunov function for the algorithm, and if $\alpha \in \big( 0, \frac{\sqrt{L^2 + \mu^2} + \mu}{L^2} \big)$, we additionally have the summability result
\begin{align}
    \sum_{k=0}^\infty (1+2\alpha\mu)^k \|\eta_k \opB z_k - \opB z_{k+1/2}\|^2 \le \frac{1+2\alpha\mu}{\alpha^2 (1 + 2\alpha\mu - \alpha^2 L^2)} \|z_0 - z_\star\|^2  < \infty .
    \label{eqn:sm-EAG_summability}
\end{align}
\end{lemma}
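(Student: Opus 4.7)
The proof follows the Lyapunov-function template of Lemma~\ref{lem:FEG_summability}, with two new twists: the parameters $(\beta_k,\eta_k,p_k,q_k)$ are geometric in $(1+2\alpha\mu)$ rather than polynomial in $k$, and the Lyapunov function carries an extra strong-monotonicity correction $-\mu q_k\|z_k-z_0\|^2$. The bulk of the work is the descent inequality for $k\ge 1$. Using the iteration identities
\begin{align*}
    z_{k+1} - z_0 &= (1-\beta_k)(z_k - z_0) - \alpha\opB z_{k+1/2}, \\
    z_{k+1/2} - z_0 &= (1-\beta_k)(z_k - z_0) - \eta_k\alpha\opB z_k, \\
    z_k - z_{k+1} &= \beta_k(z_k - z_0) + \alpha\opB z_{k+1/2}, \\
    z_{k+1/2} - z_{k+1} &= \alpha(\opB z_{k+1/2} - \eta_k\opB z_k),
\end{align*}
I expand $V_k - V_{k+1}$ and add the nonnegative quantity
\begin{align*}
    \frac{q_k}{\beta_k(1-\beta_k)(1+2\alpha\mu)}\Bigl(\langle \opB z_k - \opB z_{k+1}, z_k - z_{k+1}\rangle - \mu\|z_k - z_{k+1}\|^2\Bigr) \ge 0.
\end{align*}
The recursion $q_{k+1} = q_k/((1-\beta_k)(1+2\alpha\mu))$ is engineered so that all $(z_k-z_0)$-dependent cross terms cancel, while the choice $p_k = \eta_k\alpha q_k/(2\beta_k)$ causes the $\|\opB z_{k+1}\|^2$ terms to combine via $-\|u\|^2 + 2\langle u,v\rangle \le \|v\|^2$ into a clean term in $\|\opB z_k\|^2$. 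What remains is a quadratic in $\opB z_k, \opB z_{k+1/2}, \opB z_{k+1}$; applying Lipschitzness in the form $\|\opB z_{k+1} - \opB z_{k+1/2}\|^2 \le L^2\|z_{k+1} - z_{k+1/2}\|^2 = \alpha^2 L^2\|\eta_k\opB z_k - \opB z_{k+1/2}\|^2$ factors out $(1 + 2\alpha\mu - \alpha^2 L^2)$ and produces the claimed descent.

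The base case $k=0$ is direct: since $\beta_0 = 1$ we get $\eta_0 = 0$, so $z_{1/2} = z_0$ and $z_1 = z_0 - \alpha\opB z_0$; plugging into $V_0 - V_1$ reduces the claim to the Lipschitz bound $\|\opB z_1 - \opB z_0\|^2 \le L^2\|z_1 - z_0\|^2$. For nonnegativity of $V_k$, I set $b = z_k - z_\star$, $c = z_\star - z_0$ so that $z_k - z_0 = b+c$, expand $V_k$, use $\langle \opB z_k, b\rangle \ge \mu\|b\|^2$ to cancel the $-\mu q_k\|b\|^2$ term, and observe that the remaining mixed terms combine as $\langle \opB z_k, c\rangle - 2\mu\langle b, c\rangle = \langle \opB z_k - 2\mu b, c\rangle$, where strong monotonicity again gives $\|\opB z_k - 2\mu b\|^2 \le \|\opB z_k\|^2$. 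Young's inequality with parameter $2p_k$ then collapses the $\|\opB z_k\|^2$ terms; using the closed form $q_k = \sum_{j=0}^{k-1}(1+2\alpha\mu)^j/(1+2\alpha\mu)^{k-1}$ together with $p_k = \eta_k\alpha q_k/(2\beta_k)$ gives the identity $q_k^2/(4p_k) = (1+2\alpha\mu)^{1-k}/(2\alpha) = \tfrac{1}{2\alpha}+\mu-\mu q_k$, so the residual $\|c\|^2$-coefficient vanishes and $V_k \ge 0$.

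The range $\alpha \le (\mu + \sqrt{\mu^2 + L^2})/L^2$ is the larger root of $\alpha^2 L^2 - 2\alpha\mu - 1 \le 0$, ensuring $1 + 2\alpha\mu - \alpha^2 L^2 \ge 0$ so that $V_k$ is Lyapunov. Under the strict version, I telescope to obtain $V_0 \ge \sum_{k=0}^\infty \frac{\alpha(1+2\alpha\mu-\alpha^2 L^2)}{2} c_k\|\eta_k\opB z_k - \opB z_{k+1/2}\|^2$, with $c_k = q_k/(\beta_k(1-\beta_k))$ for $k\ge 1$ and $c_0 = 1$ matching the base case. The algebraic simplification $c_k = \bigl[\sum_{j=0}^k(1+2\alpha\mu)^j\bigr]^2/(1+2\alpha\mu)^k \ge (1+2\alpha\mu)^k$, combined with $V_0 = (1+2\alpha\mu)\|z_0-z_\star\|^2/(2\alpha)$, delivers~\eqref{eqn:sm-EAG_summability}. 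The main obstacle is the descent step: matching coefficients on the $(z_k-z_0)$-, $\|\opB z_k\|^2$-, and $\|\opB z_{k+1}\|^2$-terms requires $(\beta_k,\eta_k,p_k,q_k)$ to fit together with no algebraic slack, making the calculation delicate but essentially mechanical once the correct strong-monotonicity add-on is identified.
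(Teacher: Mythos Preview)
Your overall strategy matches the paper's exactly: subtract a nonnegative strong-monotonicity term with the right multiplier so that all $(z_k-z_0)$-cross terms vanish, then complete the square on $\opB z_{k+1}$ and apply Lipschitzness to factor out $1+2\alpha\mu-\alpha^2 L^2$. However, two of your stated constants are wrong, and with them the descent computation does not close.

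The recursion is $q_{k+1}=q_k/(1-\beta_k)$, not $q_k/((1-\beta_k)(1+2\alpha\mu))$: from $q_k=(1+2\alpha\mu)^{1-k}/\beta_{k-1}$ and $1-\beta_k=(1+2\alpha\mu)\beta_k/\beta_{k-1}$ one gets $q_{k+1}/q_k=\beta_{k-1}/((1+2\alpha\mu)\beta_k)=1/(1-\beta_k)$. Consequently the correct multiplier on the strong-monotonicity inequality is $q_k/\beta_k$, not $q_k/(\beta_k(1-\beta_k)(1+2\alpha\mu))$: expanding $V_k-V_{k+1}$ and $-\lambda\bigl(\langle z_k-z_{k+1},\opB z_k-\opB z_{k+1}\rangle-\mu\|z_k-z_{k+1}\|^2\bigr)$ via $z_k-z_{k+1}=\beta_k(z_k-z_0)+\alpha\opB z_{k+1/2}$, the coefficients on $\langle\opB z_k,z_k-z_0\rangle$ and $\langle\opB z_{k+1},z_k-z_0\rangle$ vanish simultaneously only when $\lambda\beta_k=q_k=(1-\beta_k)q_{k+1}$, which your choice does not satisfy. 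A minor point: completing the square $-p_{k+1}\|\opB z_{k+1}\|^2+2p_{k+1}\langle\opB z_{k+1},\opB z_{k+1/2}\rangle$ yields a term in $\|\opB z_{k+1/2}\|^2$, not $\|\opB z_k\|^2$; the relevant identity that makes this work is $p_{k+1}=\alpha q_k/(2\beta_k(1-\beta_k))$.

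Your nonnegativity argument, on the other hand, is correct and in fact cleaner than the paper's (which introduces an auxiliary parameter $\lambda$ and then sets $\lambda=q_k+4\mu p_k$): your observation $\|\opB z_k-2\mu(z_k-z_\star)\|^2\le\|\opB z_k\|^2$ together with the exact identity $q_k^2/(4p_k)=\tfrac{1}{2\alpha}+\mu-\mu q_k$ gives $V_k\ge 0$ directly. The base case, the bound $c_k=q_k/(\beta_k(1-\beta_k))\ge(1+2\alpha\mu)^k$, and the telescoping to~\eqref{eqn:sm-EAG_summability} are all fine.
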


We defer the proof of \cref{lem:sm-EAG_Lyapunov} to \Cref{sec:scsc_minimax_proofs}.

\subsection{Proof of \texorpdfstring{\cref{thm:sm-EAG}}{Theorem 4.2}}
Because the term $\left( \frac{1}{2\alpha}+\mu \right) \|z_0-z_\star\|^2$ subtracted from $V_k$ to obtain $W_k$ does not depend on $k$, clearly $W_k$ inherits nonincreasingness from $V_k$, i.e., $W_k \le \cdots \le W_0 = 0$.
Therefore, given that $q_k < \lambda < q_k + 4\mu p_k$, we have
\begin{align}
    & \frac{q_k \lambda \mu}{\lambda - q_k} \|z_0 - z_\star\|^2 \ge W_k + \frac{q_k \lambda \mu}{\lambda - q_k} \|z_0 - z_\star\|^2 \ge \left( p_k - \frac{\lambda - q_k}{4\mu} \right) \|\opB z_k\|^2 \nonumber \\
    & \implies \|\opB z_k\|^2 \le \left( p_k - \frac{\lambda - q_k}{4\mu} \right)^{-1} \frac{q_k \lambda \mu}{\lambda - q_k} \|z_0 - z_\star\|^2 . \label{eqn:sm-EAG_convergence_rate}
\end{align}
One can straightforwardly verify that 
\begin{align}
    \lambda_\star = (q_k (q_k + 4\mu p_k))^{1/2}
    \label{eqn:sm-EAG_lambda}
\end{align}
lies between $q_k$ and $q_k + 4\mu p_k$, and minimizes the factor $\left( p_k - \frac{\lambda - q_k}{4\mu} \right)^{-1} \frac{q_k \lambda \mu}{\lambda - q_k}$ in the right hand side of \eqref{eqn:sm-EAG_convergence_rate}.
Plugging in the explicit expressions $q_k = \frac{(1+2\alpha\mu)^k - 1}{2\alpha\mu (1+2\alpha\mu)^{k-1}}$ and $p_k = \frac{\alpha}{2}(1+2\alpha\mu)^{k-1} q_k^2 = \frac{((1+2\alpha\mu)^k - 1)^2}{8\alpha\mu^2 (1+2\alpha\mu)^{k-1}}$ (which uses \eqref{eqn:sm-EAG_pkqk}) into~\eqref{eqn:sm-EAG_lambda}, we obtain $\lambda_\star = (1+2\alpha\mu)^{\frac{k}{2}} q_k$ and
\begin{align*}
    \left( p_k - \frac{\lambda_\star - q_k}{4\mu} \right)^{-1} \frac{q_k \lambda_\star \mu}{\lambda_\star - q_k} & = \frac{4\mu^2}{((1+2\alpha\mu)^{\frac{k}{2}} - 1)^2} 
    = \frac{(\sqrt{1+2\alpha\mu}+1)^2}{\alpha^2 \left( \sum_{j=0}^{k-1} (1+2\alpha\mu)^{\frac{j}{2}} \right)^2} .
\end{align*}
Therefore, \eqref{eqn:sm-EAG_convergence_rate} with $\lambda = \lambda_\star$ concludes the proof.

\subsection{Linear convergence of OC-Halpern}
\label{subsec:oc-halpern-linear-conv}

\begin{lemma}
\label{lem:oc-halpern-dist-to-sol-rate}
Let $\opT \colon \reals^n \to \reals^n$ be a $\gamma^{-1}$-contractive operator, and let $w_k, w_{k+1/2}$ ($k=0,1,\dots$) be the OC-Halpern iterates, defined by
\begin{align*}
    w_{k+1/2} & = \beta_k w_0 + (1-\beta_k) w_k \\
    w_{k+1}   & = \opT (w_{k+1/2}),
\end{align*}
where $\beta_k = \left(\sum_{j=0}^k \gamma^{2j}\right)^{-1}$.
Then, for $k=0,1,\dots$,
\begin{align}
\label{eqn:oc_halpern_dist_to_sol_wkhalf}
    \|w_{k+1/2} - w_\star\|^2 & \le \left( \frac{\sum_{j=0}^k \gamma^j}{\sum_{j=0}^k \gamma^{2j}} \right)^2 \|w_0 - w_\star\|^2 
\end{align}
and
\begin{align}
\label{eqn:oc_halpern_dist_to_sol_wkp1}
    \|w_{k+1} - w_\star\|^2 & \le \frac{1}{\gamma^2} \left( \frac{\sum_{j=0}^k \gamma^j}{\sum_{j=0}^k \gamma^{2j}} \right)^2 \|w_0 - w_\star\|^2 .
\end{align}
\end{lemma}

We present the (simple but new) proof of this result in \Cref{sec:scsc_minimax_proofs}. For now, observe that
\begin{align*}
    \left( \frac{\sum_{j=0}^k \gamma^j}{\sum_{j=0}^k \gamma^{2j}} \right)^2 = \left( \frac{\frac{\gamma^{k+1} - 1}{\gamma - 1}}{\frac{\gamma^{2k+2} - 1}{\gamma^2 - 1}} \right)^2 = (1+\gamma)^2 \left( \frac{\gamma^{k+1} - 1}{\gamma^{2k+2} - 1} \right)^2 < \frac{(1+\gamma)^2}{\gamma^{2k+2}},
\end{align*}
so \cref{lem:oc-halpern-dist-to-sol-rate} implies the asymptotic rates $\|w_{k+1/2} - w_\star\|^2 = \cO \left( \gamma^{-2k} \|w_0 - w_\star\|^2 \right)$ and $\|w_{k+1} - w_\star\|^2 = \cO \left( \gamma^{-2k} \|w_0 - w_\star\|^2 \right)$.

\subsection{Proof of \texorpdfstring{\cref{thm:sm-eag-fast-point-convergence}}{Theorem 4.3}}
Let $\{w_k\}_{k=0,1,\dots}$ be the iterates of OC-Halpern with respect to the resolvent operator $\JB$ with $\beta_k = \frac{1}{\sum_{j=0}^k (1+2\alpha\mu)^j}$ (viewing $\JB$ as a $(1+2\alpha\mu)^{-1/2}$-contractive operator) and $w_0=z_0$.
Then
\begin{align*}
\|z_k - z_\star\|^2 = \|(w_k - z_\star) + (z_k - w_k)\|^2 \le 2\|w_k - z_\star\|^2 + 2\|z_k - w_k\|^2.
\end{align*}
The first term and the second term can be respectively bounded using \cref{lem:oc-halpern-dist-to-sol-rate} and \cref{thm:sm-EAG_OC-Halpern_MP}.
Applying \cref{lem:oc-halpern-dist-to-sol-rate} with $\gamma = (1+2\alpha\mu)^{1/2}$ yields the bound
\begin{align*}
    \|w_k - z_\star\|^2 & \le \frac{1}{\gamma^2} \left( \frac{\sum_{j=0}^{k-1} \gamma^j}{\sum_{j=0}^{k-1} \gamma^{2j}} \right)^2 \|z_0 - z_\star\|^2 
    \le \frac{(1+\gamma)^2}{\gamma^{2k+2}} \|z_0 - z_\star\|^2 .
\end{align*}
Substituting $\gamma = (1+2\alpha\mu)^{1/2}$ into the above bound gives the first term of \eqref{eqn:smeag-dist-to-sol-rate}.
Applying the bound on $\|z_k - w_k\|^2$ from \cref{thm:sm-EAG_OC-Halpern_MP} gives the second term of \eqref{eqn:smeag-dist-to-sol-rate}.
Combining these, the proof is complete.

\section{Near-optimal prox-grad-type accelerated minimax algorithm}
\label{sec:composite_minimax}

Consider the setup with minimax objective of the form $\lagrange(x,y) = \lagrange_p (x,y) + \lagrange_s (x,y)$, where $\lagrange_s$ is $L$-smooth.
Informally assume that $\lagrange_p$ is nonsmooth and proximable (we clarify this notion below).
We reformulate the minimax optimization problem into the monotone inclusion problem
\begin{align}
\label{eqn:operator_splitting}
    \begin{array}{ll}
        \underset{z \in \reals^{d}}{\textrm{find}} & 0 \in (\ssd\lagrange_p + \sop\lagrange_s) (z),
    \end{array}
\end{align}
where $\ssd$ is the analog of $\sop$ for nonsmooth convex-concave functions (precisely defined in \cref{subsec:composite_minimax_preliminaries}).
We design the \emph{\textbf{A}nchored \textbf{P}roximal \textbf{G}radient\textsuperscript{*} (APG\textsuperscript{*})}: 
\begin{align}
\label{eqn:APG*}
    \begin{aligned}
        z_k       & = \textbf{SM-EAG\texttt{+}} \left( \opI + \alpha\opB - \xi_k, \xi_k, \epsilon_k \right) \\
        \xi_{k+1} & = \beta_k z_0 + (1-\beta_k) \left( \JA(z_k - \alpha\opB z_k) + \alpha\opB z_k \right) ,
    \end{aligned}
    \tag{APG\textsuperscript{*}}
\end{align}
where $\opA = \ssd\lagrange_p$, $\opB = \sop\lagrange_s$, $\xi_0 \in \reals^d$ is the initial point, and $\textbf{SM-EAG\texttt{+}} \left( \opI + \alpha\opB - \xi_k, \xi_k, \epsilon_k \right)$ denotes the execution of \ref{eqn:SM-EAG+} with the $1$-strongly monotone $(1+\alpha L)$-smooth operator $z \mapsto (\opI + \alpha\opB)(z) - \xi_k$ with $\xi_k$ as the initial point using the step-size $\frac{\sqrt{(1+\alpha L)^2 + 1} + 1}{(1+\alpha L)^2}$ until reaching  the tolerance $\|(\opI + \alpha\opB)(\cdot) - \xi_k\| \le \epsilon_k$.

\ref{eqn:APG*} is a proximal-gradient-type algorithm; each iteration, \ref{eqn:APG*} uses a single proximal operation $\opJ_{\alpha\ssd\lagrange_p}$ and 
multiple (logarithmically many) gradient operations $\sop\lagrange_s$.
(The superscript * in the name \ref{eqn:APG*} indicates the multiple gradient evaluations per iteration.)
Thus, \ref{eqn:APG*} is useful when $\lagrange_p$ is \emph{proximable} in the sense that $\opJ_{\alpha\ssd\lagrange_p}$ can be computed efficiently.
\ref{eqn:APG*} is designed to approximate the accelerated proximal algorithm \ref{eqn:DRS-Halpern_alternate_form}, described soon, in the MP sense and thereby achieves a near-optimal accelerated rate.

\subsection{Preliminaries: Subdifferential operators and forward-backward residual}
\label{subsec:composite_minimax_preliminaries}

If $\lagrange_p\colon \reals^n \times \reals^m \to \reals$ is a finite-valued, not necessarily differentiable convex-concave function, then its \emph{(saddle) subdifferential} operator $\ssd \lagrange_p\colon \reals^n \times \reals^m \rightrightarrows \reals^n \times \reals^m$ defined by
\begin{align*}
    \ssd \lagrange_p (x,y) = \{(v,w)\in \reals^n \times \reals^m \,|\, v \in \partial_x \lagrange_p(x,y), w \in \partial_y (-\lagrange_p)(x,y) \}
\end{align*}
is maximal monotone \cite[Corollary~37.5.2]{Rockafellar1970_convex}, where $\partial_x$ and $\partial_y$ respectively denote the convex subdifferential with respect to the $x$- and $y$-variables.
However, in order to encompass a broader range of problems, such as constrained minimax optimization problems, it is necessary to consider $\lagrange_p$ taking values in $\reals\cup\{\pm\infty\}$.
In this case, $\partial\lagrange_p$ is maximal monotone if $\lagrange_p$ is closed and proper \cite[Corollary~37.5.2]{Rockafellar1970_convex}, and if that holds, $z_\star$ is a solution of~\eqref{eqn:minimax_problem} if and only if $0 \in \ssd\lagrange(z_\star)$.
(We refer the readers to Appendix~\ref{sec:regularity} for a more detailed discussion on properness.)
In our setup with $\lagrange = \lagrange_p + \lagrange_s$, we assume closedness and properness of $\lagrange_p$ and rewrite~\eqref{eqn:minimax_problem} into~\eqref{eqn:operator_splitting}, using the fact that $\ssd = \sop$ for differentiable convex-concave functions.

Of particular interest are problems with $\lagrange(x,y) = f(x) +  \langle Ax, y \rangle - g(y)$, with some $A\in\reals^{m\times n}$, 
or, more generally, $\lagrange(x,y) = f(x) + \lagrange_s(x,y) - g(y)$, with some $L$-smooth convex-concave $\lagrange_s$.
In this case, $\ssd \lagrange_p(x,y) = (\partial f(x), \partial g(y))$, and $\opJ_{\alpha \ssd\lagrange_p}(x,y) = (\prox_{\alpha f}(x), \prox_{\alpha g}(y))$, where
\begin{align*}
    \prox_{\alpha f}(x) = \underset{x'\in\reals^n}{\argmin} \left\{ \alpha f(x') + \frac{1}{2}\|x'-x\|^2 \right\}, \,\,\, \prox_{\alpha g}(y) = \underset{y'\in\reals^m}{\argmin} \left\{ \alpha g(y') + \frac{1}{2}\|y'-y\|^2 \right\} 
\end{align*}
are the proximal operators.
When $f(x) = \delta_\cX(x) = \begin{cases} 0 & x\in \cX \\ +\infty & x\notin \cX \end{cases}$, the proximal operator becomes the projection operator: $\prox_{\alpha\delta_\cX}(x) = \proj_{\cX}(x)$ for any $\alpha > 0$.
Therefore, the constrained problem~\eqref{eqn:minimax_problem} with $\cX \subset \reals^n$, $\cY \subset \reals^m$ is recovered with $\lagrange_p (x,y) = \delta_\cX (x) - \delta_\cY (y)$.

Note that $z_\star$ is a solution of~\eqref{eqn:operator_splitting} if and only if $z_\star = \JA(z_\star - \alpha\opB z_\star)$.
Therefore, the norm of the \emph{forward-backward residual} $\opG_\alpha (z) = \frac{1}{\alpha} (z - \JA(z - \alpha\opB z))$ serves as a measure for the convergence of algorithms solving~\eqref{eqn:operator_splitting}, and it generalizes the operator norm and the fixed-point residual norm.

\subsection{Preliminaries: Accelerated proximal splitting algorithm}
\label{subsec:composite_minimax_prior_algorithm}

Consider 
\begin{align}
\begin{aligned}
    w_k & = \JB (u_k) \\
    u_{k+1} & = \beta_k u_0 + (1-\beta_k) (\JA(w_k - \alpha\opB w_k) + \alpha \opB w_k)
\end{aligned}
\label{eqn:DRS-Halpern_alternate_form}
\tag{OHM-DRS}
\end{align}
with $\beta_k = \frac{1}{k+2}$.
As we explain below, this is the application of OHM to the Douglas--Rachford splitting (DRS) operator \cite{DouglasRachford1956_numerical, LionsMercier1979_splitting} defined by $\opT_{\textrm{DRS}} = \opI - \JB + \JA \circ (2\JB - \opI)$.
Here we use the alternative representation of OHM $u_{k+1} = \beta_k u_0 + (1-\beta_k) \opT u_k$ with $\beta_k = \frac{1}{k+2}$, which is equivalent to~\eqref{eqn:OHM} under the identification $u_k = w_{k+1/2}$.
Given $u_k$, letting $w_k = \JB(u_k)$ and $v_k = \JA\circ(2\JB - \opI)(u_k) = \JA(2w_k - u_k)$, we have $\TDRS(u_k) = u_k - w_k + v_k$.
Therefore, OHM with $\opT=\TDRS$ is written as
\begin{align}
    w_k     & = \JB (u_k) \nonumber \\
    v_k     & = \JA (2w_k - u_k) \label{eqn:DRS-Halpern_original_form} \\
    u_{k+1} & = \beta_k u_0 + (1-\beta_k) ( u_k + v_k - w_k ) \nonumber
\end{align}
where $\beta_k = \frac{1}{k+2}$.
But $2w_k - u_k = 2w_k - (w_k + \alpha\opB w_k) = w_k - \alpha\opB w_k$, so $v_k = \JA(w_k-\alpha\opB w_k)$.
By eliminating the second line and replacing $u_k$ by $w_k + \alpha\opB w_k$ in the third line of~\eqref{eqn:DRS-Halpern_original_form}, we obtain \ref{eqn:DRS-Halpern_alternate_form}.

Since $\opT_{\textrm{DRS}}$ is nonexpansive \cite{LionsMercier1979_splitting}, \ref{eqn:DRS-Halpern_alternate_form} exhibits the accelerated rate\\
$\|u_k - \opT_{\textrm{DRS}}(u_k)\|^2 \le 4\|u_0-u_\star\|^2/(k+1)^2$ if $u_\star \in \fix\TDRS$.
Additionally, this rate directly translates into the accelerated rate with respect to $\opG_\alpha (w_k)$, as
\begin{align*}
    u_k - \TDRS(u_k) = w_k - v_k  = w_k - \JA(w_k - \alpha\opB w_k) = \alpha \opG_\alpha(w_k) .
\end{align*}
\ref{eqn:DRS-Halpern_alternate_form} will serve as a conceptual algorithm (not implemented) that our \ref{eqn:APG*} (implementable) is designed to $\mathcal{O}(1/k^2)$-MP approximate, so that \ref{eqn:APG*} inherits the accelerated rate of \ref{eqn:DRS-Halpern_alternate_form}.
(\ref{eqn:DRS-Halpern_alternate_form} is not prox-grad-type, while \ref{eqn:APG*} is.)

\subsection{MP property and convergence of APG\texorpdfstring{\textsuperscript{*}}{*}}
\label{subsec:composite_minimax_APG*_explanation}
We now describe the design of \ref{eqn:APG*}. 
The forms of \ref{eqn:APG*} and \ref{eqn:DRS-Halpern_alternate_form} bear a clear resemblance. The second lines are identical.
In the first lines, the resolvent computation $\JB$ of \ref{eqn:DRS-Halpern_alternate_form} is replaced in \ref{eqn:APG*} by an inner loop with \ref{eqn:SM-EAG+}.
Note, $z = \JB(\xi_k)$ if and only if it solves
\begin{align}
\label{eqn:inner_loop}
    \begin{array}{ll}
        \underset{z \in \reals^{d}}{\textrm{find}} & 0 = z + \alpha \opB z - \xi_k = (\opI + \alpha\opB - \xi_k)(z),
    \end{array}
\end{align}
and $\opI + \alpha\opB - \xi_k$ is a $1$-strongly monotone, $(1+\alpha L)$-Lipschitz operator.
Therefore, \eqref{eqn:inner_loop} is an instance of the problem class of \Cref{sec:scsc_minimax}, and \ref{eqn:SM-EAG+} can efficiently solve it to accuracy $\|(\opI + \alpha\opB)(z) - \xi_k\| \le \epsilon_k$ in $\mathcal{O}(\log (1/\epsilon_k))$ iterations.
Therefore the $z_k$-iterate of \ref{eqn:APG*} approximates the $w_k = \JB(u_k)$-iterate of \ref{eqn:DRS-Halpern_alternate_form}.

Throughout the following theorems, we assume that $\zer(\opA + \opB) \ne \emptyset$.
As $z_\star \in \zer(\opA + \opB)$ if and only if there exists $u_\star \in \fix(\TDRS)$ such that $z_\star = \JB(u_\star)$ \cite{LionsMercier1979_splitting}, this implies $\fix(\TDRS) \ne \emptyset$.
We defer the proofs to \Cref{sec:composite_minimax_proofs}.

\begin{theorem}[\ref{eqn:APG*}$\approx$\ref{eqn:DRS-Halpern_alternate_form}]
\label{thm:APG_MP}
With $\alpha\in\left( 0, \frac{1}{L} \right)$, $\beta_k = \frac{1}{k+2}$, and $\epsilon_k = \frac{1+L^{-1}\|\opB \xi_0\|}{(k+1)^2 (k+2)}$, \ref{eqn:APG*} is $\mathcal{O}(1/k^2)$-MP to~\ref{eqn:DRS-Halpern_alternate_form}; specifically, if $\xi_k ,z_k$ are the \ref{eqn:APG*} iterates and $u_k,w_k$ are the iterates of~\ref{eqn:DRS-Halpern_alternate_form}, and if $\xi_0 = u_0$, then 
\begin{align*}
    \max \left\{ \|\xi_k - u_k\|^2, \|z_k - w_k\|^2 \right\} \le \frac{C(\xi_0)^2}{L^2(k+1)^2} ,
    \qquad
    \text{for }k=0,1,\dots,
\end{align*}
where $C(\xi_0) = L(\|\xi_0-\xi_\star\| + 1) + \|\opB \xi_\star\|$ and $\xi_\star = \proj_{\fix(\opT_{\textup{DRS}})}(\xi_0)$.
\end{theorem}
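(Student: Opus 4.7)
The plan is to reduce both iterations to a common outer form driven by the Douglas--Rachford operator, then track the discrepancy by a scalar recursion. First, using $\opT_{\textrm{DRS}} = \opI - \JB + \JA\circ(2\JB-\opI)$, I would verify the key factorization
\[
    \Phi(w) := \JA(w - \alpha\opB w) + \alpha\opB w \;=\; \opT_{\textrm{DRS}}\circ(\opI + \alpha\opB)(w),
\]
or equivalently $\Phi \circ \JB = \opT_{\textrm{DRS}}$, which follows by substituting $u = w + \alpha\opB w$ into the definition of $\opT_{\textrm{DRS}}$. Consequently \ref{eqn:DRS-Halpern_alternate_form} collapses to the plain Halpern form $u_{k+1} = \beta_k u_0 + (1-\beta_k)\opT_{\textrm{DRS}}(u_k)$, and the outer step of \ref{eqn:APG*} (anchoring to $\xi_0$) can be written as $\xi_{k+1} = \beta_k \xi_0 + (1-\beta_k)\bigl(\opT_{\textrm{DRS}}(\xi_k) + \nu_k\bigr)$ with $\nu_k := \Phi(z_k) - \opT_{\textrm{DRS}}(\xi_k)$.

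The crucial observation is that the inner-loop stopping criterion is phrased in exactly the metric against which $\opT_{\textrm{DRS}}$ is nonexpansive. Since $(\opI + \alpha\opB)(\JB(\xi_k)) = \xi_k$, the factorization and nonexpansiveness of $\opT_{\textrm{DRS}}$ give
\[
    \|\nu_k\| = \bigl\|\opT_{\textrm{DRS}}((\opI+\alpha\opB)(z_k)) - \opT_{\textrm{DRS}}(\xi_k)\bigr\| \le \|(\opI+\alpha\opB)(z_k) - \xi_k\| \le \epsilon_k.
\]
Subtracting the two outer steps (using $\xi_0 = u_0$) and applying nonexpansiveness of $\opT_{\textrm{DRS}}$ once more yields the clean scalar recursion $\|\xi_{k+1} - u_{k+1}\| \le (1-\beta_k)\bigl(\|\xi_k - u_k\| + \epsilon_k\bigr)$.

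Plugging in $\beta_k = 1/(k+2)$ and multiplying by $k+2$ turns this into the telescopable inequality $(k+2)\|\xi_{k+1}-u_{k+1}\| \le (k+1)(\|\xi_k-u_k\| + \epsilon_k)$, which unrolls to $(k+1)\|\xi_k - u_k\| \le \sum_{j=0}^{k-1}(j+1)\epsilon_j$. The tolerance schedule $\epsilon_j = (1+L^{-1}\|\opB\xi_0\|)/((j+1)^2(j+2))$ is engineered precisely so that $(j+1)\epsilon_j = (1+L^{-1}\|\opB\xi_0\|)\bigl(\tfrac{1}{j+1} - \tfrac{1}{j+2}\bigr)$ admits a partial-fraction telescoping, bounding the whole sum by $1 + L^{-1}\|\opB\xi_0\|$. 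The Lipschitz estimate $\|\opB\xi_0\| \le L\|\xi_0 - \xi_\star\| + \|\opB\xi_\star\|$ then rolls this into $L^{-1}C(\xi_0)$ and delivers $\|\xi_k - u_k\| \le C(\xi_0)/(L(k+1))$. The bound on $\|z_k - w_k\|$ follows by triangle inequality: $1$-strong monotonicity of $\opI+\alpha\opB$ gives $\|z_k - \JB(\xi_k)\| \le \epsilon_k$, and nonexpansiveness of $\JB$ gives $\|\JB(\xi_k) - \JB(u_k)\| \le \|\xi_k - u_k\|$, both absorbed into the same $\mathcal{O}(1/k)$ envelope.

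The main technical point, in my view, is ensuring that the inaccuracy of the inner SM-EAG+ loop does not compound across outer iterations and degrade the $\mathcal{O}(1/k^2)$ rate. This is resolved by two matched design choices: the factorization $\Phi = \opT_{\textrm{DRS}} \circ (\opI + \alpha\opB)$ forces the per-step error to be measured in the same norm used by the SM-EAG+ stopping criterion (avoiding an otherwise unavoidable factor of $1+\alpha L$), while the schedule $\epsilon_k = \Theta(1/k^3)$ is the minimal rate at which $\sum(k+1)\epsilon_k$ remains bounded and hence compatible with the telescoping. By Theorem~\ref{thm:sm-EAG}, each inner loop needs only $\mathcal{O}(\log(1/\epsilon_k)) = \mathcal{O}(\log k)$ gradient evaluations, so this scheduling costs only a logarithmic overhead per outer step.
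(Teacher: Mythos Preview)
Your proposal is correct and follows the same overall plan as the paper: cast both algorithms as Halpern iterations on $\opT_{\textrm{DRS}}$ (one exact, one perturbed by the inexact inner solve), derive a scalar recursion of the form $(k+2)\|\xi_{k+1}-u_{k+1}\| \le (k+1)\|\xi_k-u_k\| + (k+1)\epsilon_k$, and telescope using the summability $\sum_k (k+1)\epsilon_k < \infty$ built into the tolerance schedule.

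Where you differ from the paper is organizational. The paper proves a dedicated cocoercivity-type inequality for $\opG_\alpha$ (its Lemma~\ref{lem:fb_residual_key_inequality}),
\[
    \langle \opG_\alpha(v)-\opG_\alpha(w),\,(\opI+\alpha\opB)(v)-(\opI+\alpha\opB)(w)\rangle \ge \alpha\|\opG_\alpha(v)-\opG_\alpha(w)\|^2,
\]
and then tracks the quantity $\|(\opI+\alpha\opB)(z_k)-u_k\|$ through a two-line recursion (Lemma~\ref{lem:fb_error_bound}). You instead recognize the factorization $\Phi = \opT_{\textrm{DRS}}\circ(\opI+\alpha\opB)$ up front, which lets you bound the perturbation $\|\nu_k\|\le\epsilon_k$ using only the standard nonexpansiveness of $\opT_{\textrm{DRS}}$ and track $\|\xi_k-u_k\|$ directly. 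The paper's lemma is strictly stronger than nonexpansiveness (it is firm nonexpansiveness in disguise), but the extra strength is discarded in the very next line, so your more elementary route loses nothing. For the $\|z_k-w_k\|$ bound the paper goes through $\|z_k-w_k\|\le\|(\opI+\alpha\opB)(z_k)-u_k\|$ in one shot via nonexpansiveness of $\JB$, while your triangle split $\|z_k-\JB(\xi_k)\|+\|\JB(\xi_k)-w_k\|\le\epsilon_k+\|\xi_k-u_k\|$ yields the same $M/(k+1)$ envelope after the partial-fraction telescoping.
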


\begin{corollary}[Fast rate of \ref{eqn:APG*}]
\label{cor:APG_convergence}
With the conditions and notations of \cref{thm:APG_MP}, $\xi_k \to \xi_\star$ and $z_k \to \JB(\xi_\star) \in \zer(\opA+\opB)$ as $k\to\infty$.
Additionally,
\begin{align*}
    \|\opG_\alpha (z_k)\|^2 \le \frac{(3+\alpha L)^2 C(\xi_0)^2}{\alpha^2 L^2 (k+1)^2}
    ,
    \qquad
    \text{for }k=0,1,\dots.
\end{align*}
\end{corollary}

\begin{corollary}[Oracle complexity of \ref{eqn:APG*}]
\label{cor:APG_complexity}
With the conditions and notations of \cref{thm:APG_MP}, the $k$-th inner loop of \ref{eqn:SM-EAG+} in \ref{eqn:APG*} requires $\mathcal{O} \left( \log \left( k\left( \|\xi_0-\xi_\star\|+1 \right)\right)\right)$ oracle calls of $\opB$ for $k=0,1,\dots$.
This implies that the total computational cost needed to achieve $\|\opG_\alpha (z_k)\| \le \epsilon$ is 
\begin{align*}
    \mathcal{O}\left( \frac{C(\xi_0)}{\epsilon} \left( C_\opA + C_\opB \log \frac{C(\xi_0)}{\epsilon} \right) \right) ,
\end{align*}
where $C_\opA, C_\opB$ are upper bounds on the respective costs of evaluating $\JA$ and $\opB$.
\end{corollary}

We clarify some key differences between \ref{eqn:APG*} and the prior work of Diakonikolas \cite[Algorithm~3]{Diakonikolas2020_halpern}.
First, \ref{eqn:APG*} is a more general proximal-gradient-type algorithm, while the prior algorithm of Diakonikolas is a projected-gradient-type algorithm.
Second, \ref{eqn:APG*} is an anytime algorithm, as it requires no a priori specification of the terminal accuracy or the total iteration count, while the prior algorithm is not an anytime algorithm, as it requires the terminal accuracy $\epsilon$ to be pre-specified.
Third, \ref{eqn:APG*} has $\mathcal{O}\left(\frac{C_\opA}{\epsilon} + \frac{C_\opB}{\epsilon}\log \frac{1}{\epsilon}\right)$ total cost (hiding dependency on $\xi_0, \xi_\star$ and $L$), while the prior algorithm has $\mathcal{O}\left(\frac{C_\opA+C_\opB}{\epsilon}\log \frac{1}{\epsilon}\right)$ total cost, as its inner loops require projections (corresponding to $\JA$). 
Therefore, \ref{eqn:APG*} offers a speedup for constrained minimax problems when the projection represents the dominant cost, i.e., when $C_\opA \gg C_\opB$.

The initial point $\xi_k$ for the inner loop $\textbf{SM-EAG\texttt{+}} \left( \opI + \alpha\opB - \xi_k, \xi_k, \epsilon_k \right) $ of \ref{eqn:APG*} is a crude choice. However, while a more careful warm starting may improve the iteration complexity by a constant factor, we do not believe it alone can eliminate the $\mathcal{O}(\log(1/\epsilon))$-factor. In our subsequent analysis of~\ref{eqn:APG*}, we induce the MP property by solving the $k$-th inner loop up to accuracy $\epsilon_k$ satisfying $\sum_{k=0}^\infty k\epsilon_k < \infty$. Therefore, unless a mechanism for finding a warm-starting point $z$ satisfying $\|(\opI + \alpha\opB)(z) - \xi_k\| = \cO(\epsilon_k) = o(1/k)$ is proposed (which seems nontrivial), the $\mathcal{O}(\log(1/\epsilon))$-factor will persist. Eliminating the logarithmic factor probably requires a new additional insight, and we leave it as a future work.

\section{Numerical experiments}
\label{sec:numerical-simulation}

In this section, we provide numerical experiments.
First, we compare \smeag~with EG and OG on the following bilinearly coupled minimax optimization problem:
\begin{align}
\label{eqn:smeag-experiment-objective}
    \underset{x \in \reals^d}{\textrm{minimize}} \,\, \underset{y \in \reals^d}{\textrm{maximize}} \,\,\,\, \lagrange(x, y) = \frac{\mu}{2} \|x\|^2 + x^\intercal A y - \frac{\mu}{2} \|y\|^2 .
\end{align}
For completeness, we provide the precise forms of EG and OG in \Cref{sec:algorithm_specifications}.
We generate $A$ as a random matrix whose entries are i.i.d.\ Gaussians $\cN(0, \sigma^2)$.
We choose $d=50$, $\sigma=10^3$, and choose $\mu$ so that the condition number $\frac{L}{\mu}$ becomes $10^5$.
EG and OG converge slowly with respect to both measures of convergence: $\|\sop\lagrange(x_k, y_k)\|^2$ and $\|(x_k, y_k) - (x_\star, y_\star)\|^2$ (\cref{fig:sm-eag-experiment}), while \smeag~quickly reduces the both measures, despite the problem being ill-behaved with a large condition number.

\begin{figure*}[ht]
\captionsetup[subfigure]{justification=centering}
    \centering
    \begin{subfigure}{0.48\linewidth}
    \centering
    \includegraphics[scale=0.41]{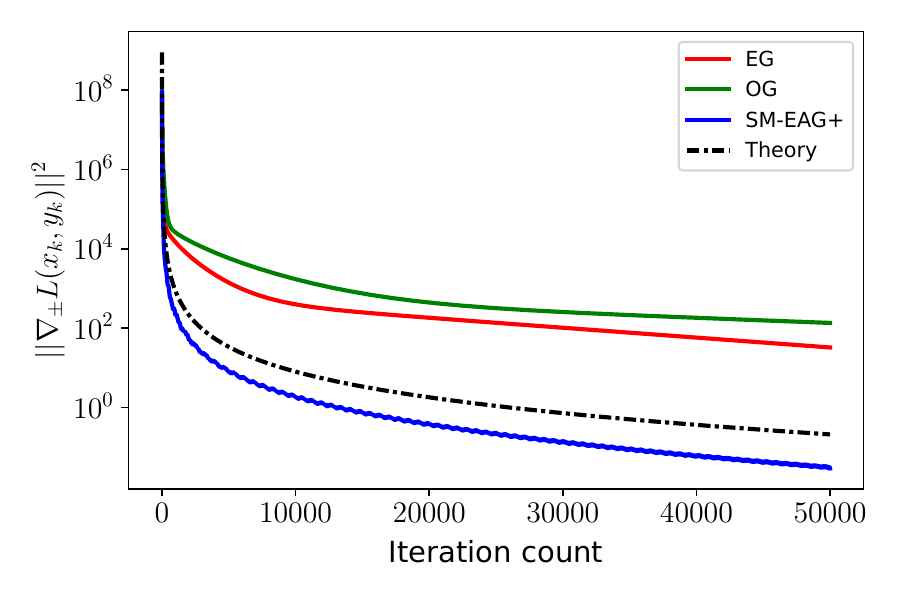}
    \caption{Plot of $\|\nabla_\pm \lagrange(x_k, y_k)\|^2$}
    \label{subfig:sm-eag-grad-norm}
    \end{subfigure}
    \hspace{.2cm}
    \begin{subfigure}{0.48\linewidth}
    \centering
    \includegraphics[scale=0.41]{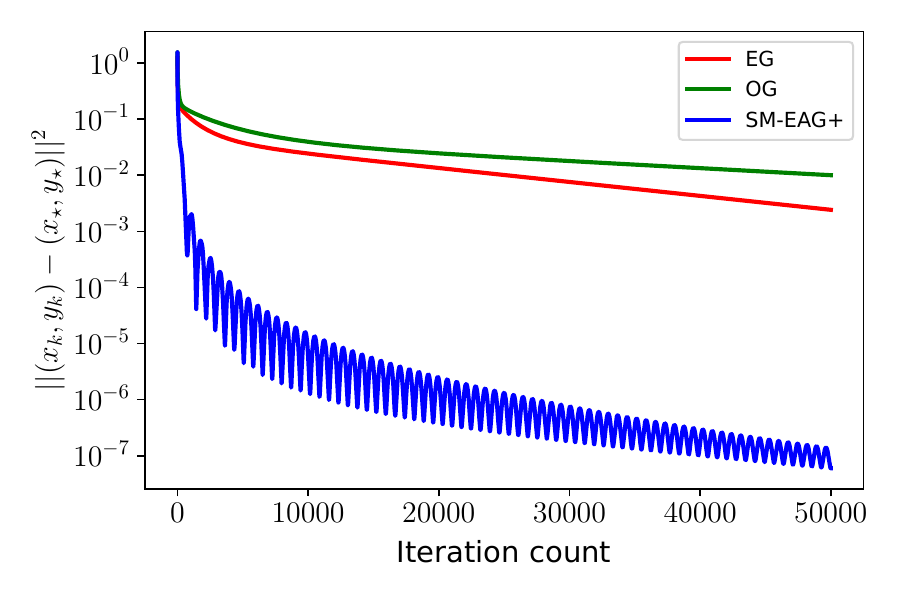}
    \caption{Plot of $\|(x_k,y_k) - (x_\star,y_\star)\|^2$}
    \label{subfig:sm-eag-dist-to-sol}
    \end{subfigure}
    \caption[]{
    Comparison of EG, OG and \smeag~on the problem~\eqref{eqn:smeag-experiment-objective}. The condition number $\frac{L}{\mu}$ of the problem is $10^5$. We use the largest possible step-size for \smeag, and tuned step-sizes for EG and OG (for best performance with respect to the gradient norm). In the left figure, the dashed black line labeled as ``Theory'' indicates the theoretical upper bound on $\|\opB z_k\|^2$ from \cref{thm:sm-EAG}.
    }
    \label{fig:sm-eag-experiment}
\end{figure*}

\begin{figure*}[ht]
\captionsetup[subfigure]{justification=centering}
    \centering
    \begin{subfigure}{0.48\linewidth}
    \centering
    \includegraphics[scale=0.41]{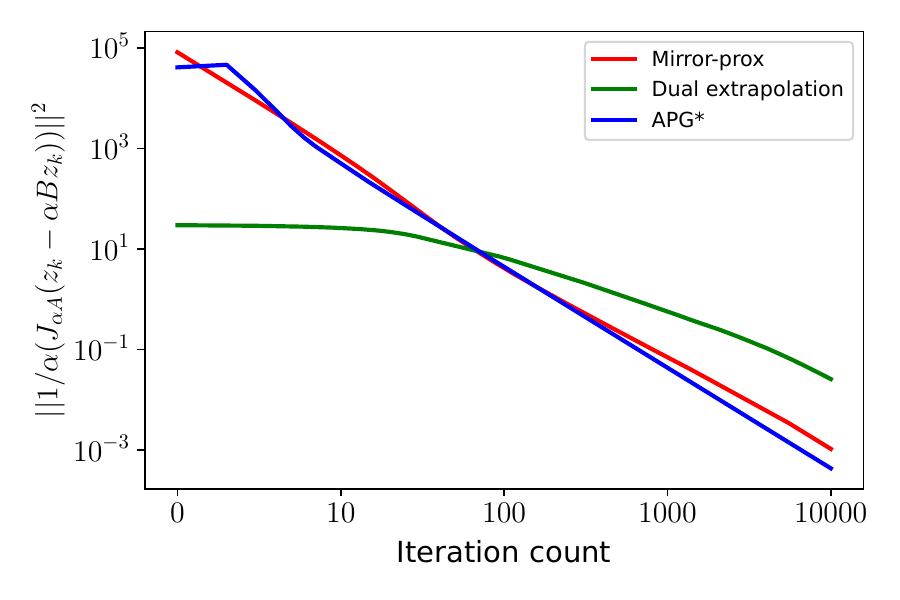}
    \end{subfigure}
    \caption[]{
    Comparison of mirror-prox, dual extrapolation and \apg~on the problem~\cref{eqn:apg-experiment-objective}, in terms of the squared forward-backward residual norm.
    }
    \label{fig:apg-experiment}
\end{figure*}

Next, we compare \apg~with mirror-prox~\cite{Nemirovski2004_proxmethod} and dual extrapolation (DE)~\cite{Nesterov2007_dual} algorithms (designed for constrained smooth variational inequalities) on the following bilinearly coupled minimax optimization problem (matrix game), constrained on probability simplices:
\begin{align}
\label{eqn:apg-experiment-objective}
    \underset{x \in \Delta^n}{\textrm{minimize}} \,\, \underset{y \in \Delta^m}{\textrm{maximize}} \,\,\,\, \lagrange(x, y) = x^\intercal A y .
\end{align}
For completeness, we provide the precise forms of mirror-prox and DE in \Cref{sec:algorithm_specifications}.
We generate $A \in \reals^{n\times m}$ as a random matrix whose entries are i.i.d.\ Gaussians $\cN(0, 1)$.
We choose $n=2\times 10^3$ and $m=10^3$, and we have $L\approx 75$.
For mirror-prox and \apg, the initial point is chosen randomly with i.i.d.\ Gaussian $\cN(0, \sigma^2)$ entries with $\sigma=0.05$.
We use the step-size $\alpha = \frac{1}{L}$ for \apg~and dual extrapolation, and $\alpha = \frac{1}{\sqrt{2}L}$ for mirror-prox (as recommended by the original papers).
We observe fast $\cO(1/k^2)$ convergence of \apg, and dual extrapolation is the slowest.
For DE, there is an initial scale difference because, unlike the other two algorithms that use randomly initialized $(x_0, y_0)$, DE is initialized with $s_{-1} = 0$ in the dual space (so there is no explicit primal initial point), and the primal iterates $(x_k, y_k)$ are generated based on the dual iterates $s_k$.
The initial forward-backward residual norms from \apg~and mirror-prox are different as the values of $\alpha$ are different (although $(x_0, y_0)$ is the same).

\section{Conclusion}
In this paper, we identified the novel merging path (MP) property among accelerated minimax algorithms, EAG \cite{YoonRyu2021_accelerated}, FEG \cite{LeeKim2021_fast}, and APS \cite{Tran-DinhLuo2021_halperntype}, and the accelerated proximal algorithm OHM \cite{Halpern1967_fixed, Lieder2021_convergence, Kim2021_accelerated}. We then used this MP property to design new state-of-the-art algorithms.

We quickly mention that our results straightforwardly extend to the setup where the underlying space $\reals^d$ is replaced with an infinite-dimensional Hilbert space, since our proofs do not rely on the finite-dimensionality of $\reals^d$. In particular, the point convergence results of \cref{cor:point-conv} and \cref{cor:APG_convergence} hold as strong convergence.

The MP property provides the insight that the four different accelerated algorithms, EAG, FEG, APS, and OHM, actually reduce to one single acceleration mechanism. However, elucidating the fundamental principle behind this single anchoring-based acceleration mechanism still remains an open problem, and further investigating the phenomenon from multiple different viewpoints, as was done for Nesterov's acceleration, would be an interesting direction of future work.

The presence or applicability of the MP property in other setups is another interesting direction of future work. One question is whether other prior optimization algorithms exhibit the MP property. In our analysis, we found that anchoring induces the MP property, so algorithms using similar regularizations, such as the recursive regularization \cite{Allen-Zhu2018_how} or the Katyusha momentum \cite{Allen-Zhu2017_katyusha}, may be approximating some other conceptual algorithm. Another question is whether the MP property can be used to design new algorithms in other setups. The strategy of designing efficient algorithms by approximating a conceptual algorithm, in the MP sense, may be widely applicable in setups beyond non-stochastic minimax optimization. 

\section*{Acknowledgments}
This work was supported by the Samsung Science and Technology Foundation (project SSTF-BA2101-02), the National Research Foundation of Korea (NRF) grant funded by the Korean Government (RS-2024-00421203, RS-2024-00406127).
We thank Jongmin Lee and Soheun Yi for providing valuable feedback.

\newpage

\bibliography{ref}

\newpage

\appendix

\section{Algorithm specifications}
\label{sec:algorithm_specifications}

\subsection{Classical minimax optimization algorithms}

\emph{Simultaneous gradient descent-ascent} for smooth minimax optimization is defined by
\begin{align*}
    & x_{k+1} = x_k - \alpha \nabla_x \lagrange(x_k, y_k) \\ 
    & y_{k+1} = y_k + \alpha \nabla_y \lagrange(x_k, y_k)
\end{align*}
where $\alpha > 0$ is the step-size.
Writing $z_k = (x_k, y_k)$ and using the $\sop$ notation, we can more concisely rewrite this as
\begin{align*}
    z_{k+1} = z_k - \alpha \sop \lagrange (z_k).
\end{align*}
The \emph{extragradient (EG)} algorithm is defined by
\begin{align*}
    z_{k+1/2} & = z_k - \alpha \sop \lagrange (z_k), \\
    z_{k+1} & = z_k - \alpha \sop \lagrange (z_{k+1/2}).
\end{align*}
The \emph{optimistic gradient descent (OG)} algorithm is defined by
\begin{align*}
    z_{k+1} = z_k - \alpha \sop \lagrange (z_k) - \alpha \left( \sop\lagrange(z_k) - \sop\lagrange(z_{k-1}) \right),
\end{align*}
where $z_0=z_{-1}$ are the starting points.

\subsection{Nesterov's AGM and its momentum parameters}
\label{subsec:AGM_and_FISTA}

We specify the versions of Nesterov's AGM used in \cref{subfig:FISTA_divergent_paths}.
Given $f\colon \reals^n \to \reals$ which is convex and $L$-smooth, AGM takes the form
\begin{align*}
    x_{k+1} & = y_k - \alpha \nabla f(y_k) \\
    y_{k+1} & = x_{k+1} + \frac{t_k - 1}{t_{k+1}} (x_{k+1} - x_k) 
\end{align*}
for $k=0,1,\dots$.
It converges at the accelerated rate $f(x_k) - f_\star \le \mathcal{O}(\|x_0-x_\star\|^2/k^2)$ provided that $\alpha \le \frac{1}{L}$ and $t_{k+1}^2 - t_{k+1} - t_k^2 \le 0$ \cite{Nesterov1983_method, ChambolleDossal2015_convergence}.
In particular, we take $t_k = \frac{k+a-1}{a}$ (which was used in \cite{ChambolleDossal2015_convergence}) and refer to $a$ as the momentum parameter in \cref{subfig:FISTA_divergent_paths}.

\subsection{Algorithms for constrained minimax problems}
\label{subsec:mirror-prox-and-dual-extrapolation}

We specify the algorithms compared with \apg~in \Cref{sec:numerical-simulation}.
We denote by $\cZ = \cX \times \cY$ the constraint set in \cref{eqn:minimax_problem}.
The \emph{mirror-prox} \cite{Nemirovski2004_proxmethod} algorithm is defined by:
\begin{align*}
    z_{k+1/2} & = \proj_{\cZ} \left( z_k - \alpha \sop\lagrange(z_k) \right) \\
    z_{k+1} & = \proj_{\cZ} \left( z_k - \alpha \sop\lagrange(z_{k+1/2}) \right)
\end{align*}
and $\overline{z}_k := \frac{1}{k+1} \sum_{j=0}^k z_j$.
The final output of the algorithm is the averaged iterate $\overline{z}_k$.
The \emph{dual extrapolation} (DE) \cite{Nesterov2007_dual} algorithm is defined by:
\begin{align*}
    u_k & = \proj_{\cZ} \left( z_c + \alpha s_{k-1} \right) \\
    z_k & = \proj_{\cZ} \left( u_k - \alpha \sop\lagrange (u_k) \right) \\
    s_k & = s_{k-1} - \sop\lagrange (z_k) 
\end{align*}
and $\overline{z}_k := \frac{1}{k+1} \sum_{j=0}^k z_j$, where $s_{-1} = 0$ and $z_c \in \cZ$ is a center point, fixed throughout the algorithm iterations.
The final output of the algorithm is the averaged iterate $\overline{z}_k$.

\section{Omitted proofs for \texorpdfstring{\Cref{sec:smooth_minimax}}{Section 3}}
\label{sec:smooth_minimax_proofs}

\subsection{Proof of \texorpdfstring{\cref{lem:FEG_summability}}{Lemma 3.2}}
\label{sec:FEG_summability_lemma_proof}

Let $V_k = \frac{\alpha k^2}{2}\|\opB z_k\|^2 + k \langle \opB z_k, z_k - z_0 \rangle + \frac{1}{2\alpha} \|z_0 - z_\star\|^2$.
Then $V_k \ge \frac{\alpha k^2}{2}\|\opB z_k\|^2 + k\langle \opB z_k, z_\star - z_0 \rangle + \frac{1}{2\alpha} \|z_0 - z_\star\|^2 \ge 0$ for $k=0,1,\dots$, where the first inequality follows from monotonicity of $\opB$ and the second one is Young's inequality.
Next, we show
\begin{align}
\label{eqn:FEG_summability_key_inequality}
    V_k - V_{k+1} \ge \frac{\alpha (1-\alpha^2 L^2)}{2} \|k \opB z_k - (k+1) \opB z_{k+1/2}\|^2 , \qquad\text{for } k=0,1,\dots.
\end{align}
Once this is established, the conclusion follows by taking $N\to\infty$ in
\begin{align*}
    \frac{\alpha (1-\alpha^2 L^2)}{2} \sum_{k=0}^N \|k \opB z_k - (k+1)\opB z_{k+1/2}\|^2 \le V_0 - V_{N+1} \le V_0 = \frac{1}{2\alpha}\|z_0 - z_\star\|^2 .
\end{align*}

First, consider the case $k=0$, where $z_{1/2}=z_0$ and $z_1 = z_0 - \alpha\opB z_{1/2} = z_0 - \alpha\opB z_0$, $V_0 = \frac{1}{2\alpha}\|z_0-z_\star\|^2$, and
\begin{align*}
    V_1 & = \frac{\alpha}{2}\|\opB z_1\|^2 + \langle \opB z_1, z_1 - z_0 \rangle + \frac{1}{2\alpha}\|z_0-z_\star\|^2 \nonumber \\
    & = \frac{\alpha}{2}\|\opB z_1\|^2 - \alpha \langle \opB z_1, \opB z_0 \rangle + \frac{1}{2\alpha}\|z_0-z_\star\|^2 . 
\end{align*}
Because $\opB$ is $L$-Lipschitz,
\begin{align*}
    \|\opB z_1 - \opB z_0\|^2 \le L^2 \|z_1 - z_0\|^2 = \alpha^2 L^2 \|\opB z_0\|^2
\end{align*}
and by multiplying $\frac{\alpha}{2}$ on both sides and rearranging terms, we get
\begin{align*}
    \frac{\alpha}{2}\|\opB z_1\|^2 - \alpha\langle \opB z_1, \opB z_0 \rangle \le - \frac{\alpha}{2} (1-\alpha^2 L^2) \|\opB z_0\|^2 = - \frac{\alpha}{2} (1-\alpha^2 L^2) \|\opB z_{1/2}\|^2 .
\end{align*}
Adding $\frac{1}{2\alpha}\|z_0-z_\star\|^2$ to both sides, we obtain~\eqref{eqn:FEG_summability_key_inequality} for $k=0$.

Now suppose $k\ge 1$. For FEG, we have the identities
\begin{align*}
    z_{k+1} - z_0 & = (1-\beta_k) (z_k - z_0) - \alpha \opB z_{k+1/2} \\
    z_k - z_{k+1} & = \beta_k (z_k - z_0) + \alpha \opB z_{k+1/2} \\
    z_{k+1/2} - z_{k+1} & = \alpha \opB z_{k+1/2} - \alpha (1-\beta_k) \opB z_k 
\end{align*}
with $\beta_k = \frac{1}{k+1}$.
Therefore, 
using monotonicity of $\opB$, we obtain
\begin{align}
    & V_k - V_{k+1} \nonumber \\
    & \ge V_k - V_{k+1} - k(k+1) \langle z_k - z_{k+1} , \opB z_k - \opB z_{k+1} \rangle \nonumber \\
    & = \frac{\alpha k^2}{2} \|\opB z_k\|^2 + k \langle \opB z_k, z_k - z_0 \rangle \nonumber \\
    & \quad - \frac{\alpha (k+1)^2}{2} \|\opB z_{k+1}\|^2 - (k+1) \langle \opB z_{k+1}, (1-\beta_k) (z_k - z_0) - \alpha \opB z_{k+1/2} \rangle \nonumber \\
    & \quad - k(k+1) \langle  \beta_k (z_k - z_0) + \alpha \opB z_{k+1/2}, \opB z_k - \opB z_{k+1}  \rangle \nonumber \\
    & \begin{aligned}
        & = \frac{\alpha k^2}{2} \|\opB z_k\|^2 - \alpha k(k+1) \langle \opB z_k, \opB z_{k+1/2} \rangle \\
        & \quad + \alpha (k+1)^2 \langle \opB z_{k+1/2} , \opB z_{k+1} \rangle - \frac{\alpha (k+1)^2}{2} \|\opB z_{k+1}\|^2 .
    \end{aligned} \label{eqn:MP_FEG_Vk-Vkp1_lowerbound}
\end{align}
Now multiplying the factor $\frac{\alpha (k+1)^2}{2}$ to the inequality
\begin{align*}
    0 & \le L^2 \|z_{k+1/2} - z_{k+1}\|^2 - \|\opB z_{k+1/2} - \opB z_{k+1}\|^2 \\
    & = \alpha^2 L^2 \left\|\opB z_{k+1/2} - \frac{k}{k+1} \opB z_k \right\|^2 - \|\opB z_{k+1/2} - \opB z_{k+1}\|^2 .
\end{align*}
and subtracting from \eqref{eqn:MP_FEG_Vk-Vkp1_lowerbound} gives
\begin{align*}
    & V_k - V_{k+1} \\
    & \ge \frac{\alpha k^2}{2} \|\opB z_k\|^2 - \alpha k(k+1) \langle \opB z_k, \opB z_{k+1/2} \rangle + \alpha (k+1)^2 \langle \opB z_{k+1/2} , \opB z_{k+1} \rangle - \frac{\alpha (k+1)^2}{2} \|\opB z_{k+1}\|^2 \\
    & \quad - \frac{\alpha (k+1)^2}{2} \left( \alpha^2 L^2 \left\|\opB z_{k+1/2} - \frac{k}{k+1} \opB z_k \right\|^2 - \|\opB z_{k+1/2} - \opB z_{k+1}\|^2 \right) \\
    & = \frac{\alpha (1 - \alpha^2 L^2)}{2} \|k \opB z_k - (k+1) \opB z_{k+1/2}\|^2 ,
\end{align*}
which is the desired result.

\subsection{Remaining proof of Theorem~\ref{thm:EAG_FEG_Halpern_MP}}
\label{sec:EAG_APS_MP_proof}

We first consider the case of EAG.
As before, denote the Halpern iterates by $\{w_k\}_{k=0,1,\dots}$ and the EAG iterates by $\{z_k\}_{k=0,1,\dots}$.
Then in the exact same way as before, we have
\begin{align*}
    z_{k+1} - w_{k+1} & = \left( \beta_k z_0 + (1-\beta_k) z_k - \alpha \opB z_{k+1/2} \right) - \left( \beta_k w_0 + (1-\beta_k) w_k - \alpha \opB w_{k+1} \right) \\
    & = (1-\beta_k) (z_k - w_k) + \alpha \left( \opB w_{k+1} - \opB z_{k+1/2} \right) ,
\end{align*}
and
\begin{align}
\label{eqn:EAG_MP_identity}
    \begin{aligned}
    \left\| z_{k+1} - w_{k+1} \right\|^2 & = (1-\beta_k)^2 \left\|z_k - w_k\right\|^2 + 2 \left\langle \alpha (1-\beta_k) (z_k - w_k), \opB w_{k+1} - \opB z_{k+1/2} \right\rangle \\
    & \quad\quad + \alpha^2 \left\| \opB w_{k+1} - \opB z_{k+1/2} \right\|^2 .    
    \end{aligned}
\end{align}
The different part is
\begin{align*}
    z_{k+1/2} - w_{k+1} = (1-\beta_k) (z_k - w_k) - \alpha \opB z_k + \alpha \opB w_{k+1}
\end{align*}
so that inner product term in~\eqref{eqn:EAG_MP_identity} is rewritten as
\begin{align*}
    & 2 \left\langle \alpha (1-\beta_k) (z_k - w_k), \opB w_{k+1} - \opB z_{k+1/2} \right\rangle \\
    & = 2 \left\langle \alpha (z_{k+1/2} - w_{k+1}) + \alpha^2 \left( \opB z_k - \opB w_{k+1} \right), \opB w_{k+1} - \opB z_{k+1/2} \right\rangle \\
    & \le 2\alpha^2 \left\langle \opB z_k - \opB w_{k+1}, \opB w_{k+1} - \opB z_{k+1/2} \right\rangle .
\end{align*}
Combining this with~\eqref{eqn:EAG_MP_identity}, we obtain
\begin{align*}
    & \left\| z_{k+1} - w_{k+1} \right\|^2 \\
    & \le (1-\beta_k)^2 \left\|z_k - w_k\right\|^2 + 2\alpha^2 \left\langle \opB z_k - \opB w_{k+1}, \opB w_{k+1} - \opB z_{k+1/2} \right\rangle + \alpha^2 \left\| \opB w_{k+1} - \opB z_{k+1/2} \right\|^2 \\
    & = (1-\beta_k)^2 \left\|z_k - w_k\right\|^2 \underbrace{ -\, \alpha^2 \left\| \opB w_{k+1} \right\|^2  + 2\alpha^2 \langle \opB z_k, \opB w_{k+1} \rangle}_{\le \alpha^2 \|\opB z_k\|^2} - 2\alpha^2 \langle \opB z_k, \opB z_{k+1/2} \rangle + \alpha^2 \|\opB z_{k+1/2}\|^2
\end{align*}
Plugging in $\beta_k = \frac{1}{k+1}$ and multiplying both sides by $(k+1)^2$ gives
\begin{align*}
    (k+1)^2 \left\| z_{k+1} - w_{k+1} \right\|^2 \le & \, k^2 \left\| z_k - w_k \right\|^2 + \alpha^2 (k+1)^2 \|\opB z_k - \opB z_{k+1/2}\|^2 .
\end{align*}
Now, with the following Lemma~\ref{lem:EAG_summability}, the proof for EAG is done.

\begin{lemma}
\label{lem:EAG_summability}
Let $\opB\colon \reals^d \to \reals^d$ be monotone and $L$-Lipschitz and assume $z_\star \in\zer\opB$ exists.
Let $\{z_k\}_{k=0,1,\dots}$ be the iterates of EAG with $\beta_k = \frac{1}{k+1}$.
Then there exist constants $\eta \in (0,1)$ and $C>0$, not depending on $L$, such that for $\alpha \in \left(0, \frac{\eta}{L}\right)$,
\begin{align*}
    \sum_{k=0}^\infty (k+1)^2 \|\opB z_k - \opB z_{k+1/2}\|^2 \le \frac{C}{\alpha^2} \|z_0 - z_\star\|^2 < \infty .
\end{align*}
\end{lemma}

For the case of APS, proceeding with $\opB z_k, \opB z_{k+1/2}$ replaced by $\opB v_k, \opB v_{k+1}$ respectively, we obtain
\begin{align*}
    (k+1)^2 \left\| z_{k+1} - w_{k+1} \right\|^2 \le & \, k^2 \left\| z_k - w_k \right\|^2 + \alpha^2 (k+1)^2 \|\opB v_k - \opB v_{k+1}\|^2 
\end{align*}
and the proof is complete once we show:

\begin{lemma}
\label{lem:APS_summability}
Let $\opB\colon \reals^d \to \reals^d$ be monotone and $L$-Lipschitz and assume $z_\star \in\zer\opB$ exists.
Let $\{z_k\}_{k=0,1,\dots}$ be the iterates of APS with $\beta_k = \frac{1}{k+1}$.
Then there exist constants $\eta \in (0,1)$ and $C>0$, not depending on $L$, such that for $\alpha \in \left(0, \frac{\eta}{L}\right)$,
\begin{align*}
    \sum_{k=0}^\infty (k+1)^2 \|\opB v_k - \opB v_{k+1}\|^2 \le \frac{C}{\alpha^2} \|z_0 - z_\star\|^2 < \infty .
\end{align*}
\end{lemma}

The proofs of Lemmas~\ref{lem:EAG_summability} and \ref{lem:APS_summability} follow from arguments similar to that of Lemma~\ref{lem:FEG_summability}.
However, they turn out to be more lengthy, so we defer their proofs to Appendix~\ref{sec:summability_lemmas}.

\section{Omitted proofs for \texorpdfstring{\Cref{sec:scsc_minimax}}{Section 4}}
\label{sec:scsc_minimax_proofs}

\subsection{Proof of \texorpdfstring{\cref{lem:sm-EAG_Lyapunov}}{Lemma 4.4}}
When $k=0$, we have $z_{1/2} = z_0$, $z_1 = z_0 - \alpha \opB z_0$, $V_0 = \left( \frac{1}{2\alpha} + \mu\right) \|z_0 - z_\star\|^2$, and
\begin{align*}
    V_1 & = \frac{\alpha}{2}\|\opB z_1\|^2 + \langle \opB z_1 - \mu(z_1 - z_0), z_1 - z_0 \rangle + \left(\frac{1}{2\alpha} + \mu\right) \|z_0 - z_\star\|^2 \nonumber \\
    & = \frac{\alpha}{2}\|\opB z_1\|^2 - \alpha \langle \opB z_1, \opB z_0 \rangle - \alpha^2 \mu \|\opB z_0\|^2 + \left(\frac{1}{2\alpha} + \mu\right) \|z_0 - z_\star\|^2 . 
\end{align*}
Because $\opB$ is $L$-Lipschitz,
\begin{align*}
    0 & \le L^2 \|z_1 - z_0\|^2 - \|\opB z_1 - \opB z_0\|^2 = (\alpha^2 L^2 - 1) \|\opB z_0\|^2 + 2\langle \opB z_0, \opB z_1 \rangle - \|\opB z_1\|^2 .
\end{align*}
Therefore,
\begin{align*}
    V_0 - V_1 & = \alpha^2 \mu \|\opB z_0\|^2 + \alpha \langle \opB z_0, \opB z_1 \rangle - \frac{\alpha}{2} \|\opB z_1\|^2 \\
    & = \frac{\alpha}{2} (1+2\alpha\mu -\alpha^2 L^2) \|\opB z_0\|^2 + \frac{\alpha}{2} \left( (\alpha^2 L^2 - 1) \|\opB z_0\|^2 + 2\langle \opB z_0, \opB z_1 \rangle - \|\opB z_1\|^2 \right) \\
    & \ge \frac{\alpha}{2} (1+2\alpha\mu -\alpha^2 L^2) \|\opB z_0\|^2 \\
    & = \frac{\alpha}{2} (1+2\alpha\mu -\alpha^2 L^2) \|\eta_0 \opB z_0 - \opB z_{1/2}\|^2 
\end{align*}
where the last equality holds because $\eta_0 = 0$ and $z_{1/2} = z_0$.

Now suppose $k\ge 1$. For \ref{eqn:SM-EAG+}, we have the identities
\begin{align}
    z_{k+1} - z_0 & = (1-\beta_k) (z_k - z_0) - \alpha \opB z_{k+1/2} \label{eqn:sm_EAG+_zkp1_minus_z0} \\
    z_k - z_{k+1} & = \beta_k (z_k - z_0) + \alpha \opB z_{k+1/2} \label{eqn:SM-EAG+_zk_minus_zkp1} \\
    z_{k+1/2} - z_{k+1} & = \alpha \opB z_{k+1/2} - \eta_k \alpha \opB z_k . \label{eqn:sm_EAG+_zkhalf_minus_zkp1}
\end{align}
Note that $\beta_k = \frac{1}{\sum_{j=0}^{k} (1+2\alpha\mu)^j}$ satisfies
\begin{align*}
    \frac{1}{\beta_{k}} = \sum_{j=0}^{k} (1+2\alpha\mu)^j = 1 + (1+2\alpha\mu) \sum_{j=0}^{k-1} (1+2\alpha\mu)^j = 1 + \frac{1+2\alpha\mu}{\beta_{k-1}} 
\end{align*}
for $k=1,2,\dots$, so $q_k = \frac{1}{(1+2\alpha\mu)^{k-1}} \frac{1}{\beta_{k-1}}$ satisfies the recurrence relation
\begin{align}
    q_{k+1} = \frac{q_k}{\beta_k} \frac{\beta_{k-1}}{(1+2\alpha\mu)} =
    \frac{q_k}{\beta_k} \frac{1}{\frac{1}{\beta_k} - 1} = \frac{q_k}{1-\beta_k}
    \label{eqn:sm-EAG_recurrence_q_k}
\end{align}
for $k=1,2,\dots$.
Additionally, because $\eta_k = \frac{1-\beta_k}{1+2\alpha\mu}$ and $p_k = \frac{\eta_k \alpha q_k}{2\beta_k}$,
\begin{align}
    p_k = \frac{\alpha (1-\beta_k) q_k}{2\beta_k (1+2\alpha\mu)} = \frac{\alpha}{2} \left( \frac{1}{\beta_k} - 1 \right) \frac{q_k}{1 + 2\alpha\mu} = \frac{\alpha}{2} \frac{1}{\beta_{k-1}} q_k = \frac{\alpha}{2} (1+2\alpha\mu)^{k-1} q_k^2 
    \label{eqn:sm-EAG_pkqk}
\end{align}
and
\begin{align}
    p_{k+1} & = \frac{\eta_{k+1}\alpha q_{k+1}}{2\beta_{k+1}} = \frac{\alpha q_{k+1}}{2\beta_{k+1}} \frac{1-\beta_{k+1}}{1+2\alpha\mu} = \frac{\alpha q_k}{2(1-\beta_k)} \frac{1-\beta_{k+1}}{(1+2\alpha\mu)\beta_{k+1}} \nonumber \\
    & = \frac{\alpha q_k}{2(1-\beta_k)} \frac{1}{1+2\alpha\mu} \left( \frac{1}{\beta_{k+1}} - 1 \right) = \frac{\alpha q_k}{2\beta_k (1-\beta_k)} . 
    \label{eqn:sm-EAG_recurrence_p_k}
\end{align}
The identities~\eqref{eqn:sm-EAG_recurrence_q_k} through \eqref{eqn:sm-EAG_recurrence_p_k} will be useful for the subsequent arguments.

Using $\mu$-strong monotonicity of $\opB$ and the identities~\eqref{eqn:sm_EAG+_zkp1_minus_z0}, \eqref{eqn:SM-EAG+_zk_minus_zkp1} and \eqref{eqn:sm-EAG_recurrence_q_k}, we have
\begin{align}
    V_k - V_{k+1} & \ge V_k - V_{k+1} - \frac{q_k}{\beta_k} \langle z_k - z_{k+1}, \opB z_k - \opB z_{k+1} - \mu(z_k - z_{k+1}) \rangle \nonumber \\
    & = p_k \|\opB z_k\|^2 + q_k \langle \opB z_k , z_k-z_0 \rangle - q_k \mu \|z_k - z_0\|^2 \nonumber \\
    & \quad - p_{k+1} \|\opB z_{k+1}\|^2 - q_{k+1} \langle \opB z_{k+1} , (1-\beta_k)(z_k - z_0) - \alpha \opB z_{k+1/2} \rangle \nonumber \\
    & \quad + q_{k+1} \mu \left\| (1-\beta_k) (z_k - z_0) - \alpha \opB z_{k+1/2} \right\|^2 \nonumber \\
    & \quad - \frac{q_k}{\beta_k} \langle \beta_k (z_k - z_0) + \alpha \opB z_{k+1/2} , \opB z_k - \opB z_{k+1} \rangle \nonumber \\
    & \quad + \frac{q_k \mu}{\beta_k} \left\| \beta_k (z_k - z_0) + \alpha \opB z_{k+1/2} \right\|^2 \nonumber \\
    & \begin{aligned}
        & = p_k \|\opB z_k\|^2 - \frac{\alpha q_k}{\beta_k} \langle \opB z_k, \opB z_{k+1/2} \rangle + \mu \alpha^2 \left( \frac{q_k}{\beta_k} + q_{k+1} \right) \|\opB z_{k+1/2}\|^2 \\
        & \quad + \alpha \left( \frac{q_k}{\beta_k} + q_{k+1} \right) \langle \opB z_{k+1/2} , \opB z_{k+1} \rangle - p_{k+1} \|\opB z_{k+1}\|^2 \\
        & \quad + 2\mu\alpha \left( q_k - (1-\beta_k)q_{k+1} \right) \langle \opB z_{k+1/2}, z_k - z_0 \rangle \\
        & \quad + (q_k - (1-\beta_k) q_{k+1}) \langle \opB z_{k+1}, z_k - z_0 \rangle \\
        & \quad + \mu (1-\beta_k) ((1-\beta_k) q_{k+1} - q_k) \|z_k - z_0\|^2 
    \end{aligned} \nonumber \\
    & \begin{aligned}
        & = p_k \|\opB z_k\|^2 - \frac{\alpha q_k}{\beta_k} \langle \opB z_k, \opB z_{k+1/2} \rangle + \frac{\mu \alpha^2 q_k}{\beta_k (1-\beta_k)} \|\opB z_{k+1/2}\|^2 \\
        & \quad + \frac{\alpha q_k}{\beta_k (1-\beta_k)} \langle \opB z_{k+1/2} , \opB z_{k+1} \rangle - p_{k+1} \|\opB z_{k+1}\|^2 .
    \end{aligned} \label{eqn:sm-EAG_Lyapunov_monotone}
\end{align}
Now because $\opB$ is $L$-Lipschitz,
\begin{align}
\label{eqn:sm-EAG_Lyapunov_Lipschitz}
    \|\opB z_{k+1/2} - \opB z_{k+1}\|^2 \le L^2 \|z_{k+1/2} - z_{k+1}\|^2 = L^2 \|\alpha \opB z_{k+1/2} - \eta_k \alpha \opB z_k\|^2 
\end{align}
where we applied the identity~\eqref{eqn:sm_EAG+_zkhalf_minus_zkp1} for the last equality.
Multiplying $p_{k+1}$ to \eqref{eqn:sm-EAG_Lyapunov_Lipschitz} and then subtracting it from \eqref{eqn:sm-EAG_Lyapunov_monotone}, we obtain
\begin{align}
    V_k - V_{k+1} & \ge ( p_k - p_{k+1} \eta_k^2 \alpha^2 L^2) \|\opB z_k\|^2 - \left( \frac{\alpha q_k}{\beta_k} - 2p_{k+1} \eta_k \alpha^2 L^2 \right) \langle \opB z_k, \opB z_{k+1/2} \rangle \nonumber \\
    & \quad + \left( \frac{\mu\alpha^2 q_k}{\beta_k (1-\beta_k)} + p_{k+1}(1-\alpha^2 L^2) \right) \|\opB z_{k+1/2}\|^2 \nonumber \\
    & \quad + \left( \frac{\alpha q_k}{\beta_k (1-\beta_k)} - 2p_{k+1} \right) \langle \opB z_{k+1/2} , \opB z_{k+1} \rangle \nonumber \\
    & = \frac{\alpha (1 + 2\alpha\mu - \alpha^2 L^2) q_k}{2\beta_k (1-\beta_k)} \|\eta_k \opB z_k - \opB z_{k+1/2}\|^2
    \label{eqn:sm-EAG_summable_term}
\end{align}
where the last equality follows from the identities $\eta_k = \frac{1-\beta_k}{1+2\alpha\mu}$, $p_k = \frac{\eta_k \alpha q_k}{2\beta_k}$ and \eqref{eqn:sm-EAG_recurrence_p_k}.
As $1 + 2\alpha\mu - \alpha^2 L^2 \ge 0$ if $0 < \alpha \le \frac{\sqrt{L^2 + \mu^2} + \mu}{L^2}$, for this range of $\alpha$,
\begin{align*}
    V_k - V_{k+1} & \ge \frac{\alpha (1 + 2\alpha\mu - \alpha^2 L^2)}{2} (1+2\alpha\mu)^k \|\eta_k \opB z_k - \opB z_{k+1/2}\|^2 \ge 0,
\end{align*}
where we used $\frac{q_k}{\beta_k (1-\beta_k)} = \frac{q_{k+1}}{\beta_k} > (1+2\alpha\mu)^k$.
Now it only remains to show that $V_N \ge 0$ holds for any $N \in \mathbb{N}$; if that is the case, then
\begin{align*}
    & \sum_{k=0}^N (1+2\alpha\mu)^k \|\eta_k \opB z_k - \opB z_{k+1/2}\|^2 \le \frac{2}{\alpha (1 + 2\alpha\mu - \alpha^2 L^2)} (V_0 - V_{N+1}) \\
    & \quad \le \frac{2}{\alpha (1 + 2\alpha\mu - \alpha^2 L^2)} \left( \frac{1}{2\alpha} + \mu \right) \|z_0 - z_\star\|^2 = \frac{1+2\alpha\mu}{\alpha^2 (1 + 2\alpha\mu - \alpha^2 L^2)} \|z_0 - z_\star\|^2 ,
\end{align*}
and taking $N\to\infty$, we obtain~\eqref{eqn:sm-EAG_summability}.

Consider the following shifted version of $V_k$:
\begin{align*}
    W_k = V_k - \left(\frac{1}{2\alpha} + \mu\right) \|z_0 - z_\star\|^2 = p_k \|\opB z_k\|^2 + q_k \langle \opB z_k - \mu (z_k - z_0), z_k - z_0 \rangle .
\end{align*}
Because $\opB$ is $\mu$-strongly monotone, if $\lambda \ge 0$ and $\lambda \ne q_k$,
\begin{align}
    W_k & \ge p_k \|\opB z_k\|^2 + q_k \langle \opB z_k - \mu (z_k - z_0), z_k - z_0 \rangle - \lambda \langle \opB z_k - \mu (z_k - z_\star), z_k - z_\star \rangle \nonumber \\
    &
    \begin{aligned}
        & = p_k \|\opB z_k\|^2 + \langle \opB z_k , (q_k - \lambda) z_k - q_k z_0 + \lambda z_\star \rangle \\
        & \quad + \mu(\lambda-q_k) \|z_k\|^2 + 2 \mu \langle z_k, q_k z_0 - \lambda z_\star \rangle - q_k \mu \|z_0\|^2 + \lambda\mu \|z_\star\|^2  
    \end{aligned} \nonumber \\
    &
    \begin{aligned}
        & = p_k \|\opB z_k\|^2 + \langle \opB z_k , (q_k - \lambda) z_k - q_k z_0 + \lambda z_\star \rangle \\
        & \quad + \frac{\mu}{\lambda - q_k} \left\| (\lambda - q_k) z_k + q_k z_0 - \lambda z_\star \right\|^2 - \frac{q_k \lambda \mu}{\lambda - q_k} \|z_0 - z_\star\|^2 .
    \end{aligned} \nonumber \\ 
    &
    \begin{aligned}
        & = \left( p_k - \frac{\lambda - q_k}{4\mu} \right) \|\opB z_k\|^2 + \frac{\mu}{\lambda - q_k} \left\| \frac{\lambda - q_k}{2\mu} \opB z_k - (\lambda - q_k) z_k - q_k z_0 + \lambda z_\star \right\|^2 \\
        & \quad - \frac{q_k \lambda \mu}{\lambda - q_k} \|z_0 - z_\star\|^2 .
    \end{aligned} \nonumber
\end{align}
Adding $\frac{q_k \lambda \mu}{\lambda - q_k} \|z_0 - z_\star\|^2$ to the both sides and taking $\lambda = q_k + 4\mu p_k$, we obtain 
\begin{align*}
    0 & \le W_k + \left( \frac{q_k^2}{4p_k} + \mu q_k \right) \|z_0 - z_\star\|^2 = V_k ,
\end{align*}
where the last equality follows by applying \eqref{eqn:sm-EAG_pkqk} and then plugging in the expression $q_k = \frac{1}{(1+2\alpha\mu)^{k-1}} \frac{1}{\beta_{k-1}} = \frac{1}{(1+2\alpha\mu)^{k-1}} \frac{(1+2\alpha\mu)^k - 1}{2\alpha\mu}$.

\subsection{Proof of \texorpdfstring{\cref{lem:oc-halpern-dist-to-sol-rate}}{Lemma 4.5}}

Note that \eqref{eqn:oc_halpern_dist_to_sol_wkp1} is an immediate consequence of \cref{eqn:oc_halpern_dist_to_sol_wkhalf} because
\begin{align*}
    \|w_{k+1} - w_\star\|^2 = \|\opT(w_{k+1/2}) - w_\star\|^2 \le \frac{1}{\gamma^2} \|w_{k+1/2} - w_\star\|^2 .
\end{align*}
When $k=0$, we have $w_{1/2} = w_0$, so both \eqref{eqn:oc_halpern_dist_to_sol_wkhalf} and \eqref{eqn:oc_halpern_dist_to_sol_wkp1} hold trivially.
Now assume that \eqref{eqn:oc_halpern_dist_to_sol_wkhalf} and \eqref{eqn:oc_halpern_dist_to_sol_wkp1} holds for some $k \ge 0$.
To simplify the notation, we write $\left( \frac{\sum_{j=0}^k \gamma^j}{\sum_{j=0}^k \gamma^{2j}} \right)^2 = \rho_k$.
Then 
\begin{align}
    \|w_{k+3/2} - w_\star\|^2 & = \left\| \beta_{k+1} w_0 + (1-\beta_{k+1}) w_{k+1} - w_\star \right\|^2 \nonumber \\
    & = \left\| \beta_{k+1} (w_0 - w_\star) + (1-\beta_{k+1}) (w_{k+1} - w_\star) \right\|^2 \nonumber \\
    & \le \beta_{k+1}^2 \left( 1 + \frac{1}{\varphi_k} \right) \left\| w_0 - w_\star \right\|^2 + (1-\beta_{k+1})^2 (1 + \varphi_k) \left\| w_{k+1} - w_\star \right\|^2 \nonumber \\
    & \le \left[ \beta_{k+1}^2 \left( 1 + \frac{1}{\varphi_k} \right) + \frac{1}{\gamma^2} (1-\beta_{k+1})^2 (1 + \varphi_k) \rho_k \right] \left\| w_0 - w_\star \right\|^2 \label{eqn:oc-halpern-dist-to-sol-recursion}
\end{align}
where the third line uses Young's inequality and $\varphi_k > 0$ can be chosen arbitrarily, and the last line uses the induction hypothesis.
Now we minimize the coefficient of $\|w_0 - w_\star\|^2$ from \cref{eqn:oc-halpern-dist-to-sol-recursion} by choosing $\varphi_k = \frac{\beta_{k+1}\gamma}{(1-\beta_{k+1})\rho_k}$, which gives
\begin{align}
\label{eqn:oc-halpern-dist-to-sol-rate-final}
    \|w_{k+3/2} - w_\star\|^2 \le \frac{\left( (1-\beta_{k+1}) \sqrt{\rho_k} + \beta_{k+1} \gamma \right)^2}{\gamma^2} \|w_0 - w_\star\|^2 .
\end{align}
Finally, we substitute the following into \cref{eqn:oc-halpern-dist-to-sol-rate-final}:
\begin{align*}
    (1-\beta_{k+1}) \sqrt{\rho_k} + \beta_{k+1} \gamma & = \frac{\gamma^2 \sum_{j=0}^k \gamma^{2j}}{\sum_{j=0}^{k+1} \gamma^{2j}} \frac{\sum_{j=0}^k \gamma^j}{\sum_{j=0}^k \gamma^{2j}} + \frac{\gamma}{\sum_{j=0}^{k+1} \gamma^{2j}} \\
    & = \frac{\gamma^2 \sum_{j=0}^k \gamma^j}{\sum_{j=0}^{k+1} \gamma^{2j}} + \frac{\gamma}{\sum_{j=0}^{k+1} \gamma^{2j}} \\
    & = \gamma \frac{\sum_{j=0}^{k+1} \gamma^j}{\sum_{j=0}^{k+1} \gamma^{2j}}
\end{align*}
to complete the induction and conclude the proof.

\section{Omitted proofs for \texorpdfstring{\Cref{sec:composite_minimax}}{Section 5}}
\label{sec:composite_minimax_proofs}

\subsection{Proof of \texorpdfstring{\cref{thm:APG_MP}}{Theorem 5.1}}

We first provide some preliminary results.

\begin{lemma}[{\cite[Lemma~3]{Tran-DinhLuo2021_halperntype}}]
\label{lem:fb_residual_key_inequality}
Suppose that $\opA, \opB$ are maximal monotone operators on $\reals^d$, and $\opB$ is single-valued.
For $\alpha > 0$, let $\opG_\alpha (v) = \frac{1}{\alpha} \left( v - \JA (v-\alpha\opB v) \right)$.
Then for any $v,w \in \reals^d$,
\begin{align*}
    \left\langle \opG_\alpha (v) - \opG_\alpha (w), (\opI + \alpha\opB)(v) - (\opI + \alpha\opB)(w) \right\rangle \ge \alpha \|\opG_\alpha (v) - \opG_\alpha (w)\|^2 .
\end{align*}
\end{lemma}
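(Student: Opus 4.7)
The plan is to unpack the definition of $\opG_\alpha$ via the resolvent identity and then combine the monotonicity of $\opA$ with the monotonicity of $\opB$ (both of which are assumed maximal monotone). Specifically, setting $u_v := v - \alpha\opG_\alpha(v) = \JA(v - \alpha\opB v)$, the defining property of the resolvent gives $v - \alpha\opB v \in (\opI + \alpha\opA)(u_v)$, which after rearrangement becomes
\[
\opG_\alpha(v) - \opB v \;\in\; \opA\bigl(v - \alpha\opG_\alpha(v)\bigr),
\]
and analogously for $w$. This is the only nontrivial observation; the rest is algebra.

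Next, I would apply monotonicity of $\opA$ to the two points $v - \alpha\opG_\alpha(v)$ and $w - \alpha\opG_\alpha(w)$, together with the above selections from the graph of $\opA$. Writing $p := \opG_\alpha(v)-\opG_\alpha(w)$, $q := \opB v - \opB w$, and $r := v - w$ for brevity, monotonicity of $\opA$ yields
\[
\bigl\langle p - q,\ r - \alpha p \bigr\rangle \;\ge\; 0,
\]
which expands to $\langle p, r\rangle + \alpha\langle q, p\rangle \ge \alpha\|p\|^2 + \langle q, r\rangle$.

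Then I would invoke monotonicity of $\opB$, which (since $\opB$ is single-valued) gives $\langle q, r\rangle \ge 0$, so the right-hand side is at least $\alpha\|p\|^2$. Finally, I would recognize that the left-hand side is exactly
\[
\bigl\langle \opG_\alpha(v) - \opG_\alpha(w),\ (\opI + \alpha\opB)(v) - (\opI + \alpha\opB)(w) \bigr\rangle,
\]
which completes the proof.

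The one subtle point — and the place where a careless reader could stumble — is the identification $\opG_\alpha(v) - \opB v \in \opA(v - \alpha\opG_\alpha(v))$; this is where all the structure lives. After that, it is just two applications of monotonicity plus one regrouping, so I do not anticipate any real obstacle. No continuity or maximality of the sum $\opA+\opB$ is needed; only the set-valued monotonicity of $\opA$ and the (single-valued) monotonicity of $\opB$ are used.
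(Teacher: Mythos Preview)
Your proof is correct and is essentially the same argument as the paper's: both reduce to monotonicity of $\opA$ applied at the resolvent outputs together with monotonicity of $\opB$. The only cosmetic difference is that the paper packages the first step as $\alpha$-cocoercivity of the Moreau--Yosida approximation $\opM_{\alpha\opA}(u)=\frac{1}{\alpha}(u-\JA u)$ (citing \cite[Corollary~23.11]{BauschkeCombettes2017_convex}), whereas you derive the same inequality directly from the resolvent identity and monotonicity of $\opA$; your route is slightly more self-contained but otherwise identical.
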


\begin{lemma}
\label{lem:fb_error_bound}
Let $\{(u_k, w_k)\}_{k=0,1,\dots,}$ be the iterates of \ref{eqn:DRS-Halpern_alternate_form} and $\{(\xi_k, z_k)\}_{k=0,1,\dots,}$ be the iterates of \ref{eqn:APG*}. 
If $u_0 = \xi_0$, then for $k=1,2,\dots$, we have
\begin{align}
\label{eqn:APG_xi_u_difference_bound}
    (k+1) \|\xi_{k} - u_{k}\| \le k \|z_{k-1} + \alpha \opB z_{k-1} - u_{k-1}\|
\end{align}
and
\begin{align}
\label{eqn:APG_z_u_difference_bound}
    (k+1)\|z_{k} + \alpha\opB z_{k} - u_{k}\| & \le k \|z_{k-1} + \alpha\opB z_{k-1} - u_{k-1}\| + (k+1)\epsilon_{k} . 
\end{align}
where each $\epsilon_k$ is an upper bound on $\|z_{k} + \alpha \opB z_{k} - \xi_{k}\|$.
\end{lemma}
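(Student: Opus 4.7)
I would prove the two inequalities in sequence, establishing \eqref{eqn:APG_xi_u_difference_bound} first and then using it to deduce \eqref{eqn:APG_z_u_difference_bound}. The key idea is to reformulate the outer-loop updates in terms of the forward-backward residual $\opG_\alpha$ so that Lemma~\ref{lem:fb_residual_key_inequality} can be applied as a ``cocoercivity'' estimate that yields a nonexpansive-type contraction.

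\textbf{Step 1: algebraic reformulation.} Using the identity $\JA(v - \alpha\opB v) = v - \alpha\opG_\alpha(v)$, the quantity appearing inside the outer-loop averages rewrites as
\[
\JA(v - \alpha\opB v) + \alpha\opB v \;=\; (\opI + \alpha\opB)(v) - \alpha\opG_\alpha(v).
\]
Applying this to both $v = z_{k-1}$ (in \ref{eqn:APG*}) and $v = w_{k-1}$ (in \ref{eqn:DRS-Halpern_alternate_form}), and using $\xi_0 = u_0$ together with $(\opI + \alpha\opB)(w_{k-1}) = u_{k-1}$ (since $w_{k-1} = \JB(u_{k-1})$), the anchor terms cancel and I get
\[
\xi_k - u_k \;=\; (1-\beta_{k-1})\bigl[e_{k-1} \,-\, \alpha\bigl(\opG_\alpha(z_{k-1}) - \opG_\alpha(w_{k-1})\bigr)\bigr],
\]
where $e_{k-1} := (\opI + \alpha\opB)(z_{k-1}) - u_{k-1}$.

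\textbf{Step 2: apply Lemma~\ref{lem:fb_residual_key_inequality}.} Since $(\opI + \alpha\opB)(w_{k-1}) = u_{k-1}$, Lemma~\ref{lem:fb_residual_key_inequality} gives
\[
\bigl\langle \opG_\alpha(z_{k-1}) - \opG_\alpha(w_{k-1}),\, e_{k-1}\bigr\rangle \;\ge\; \alpha\,\|\opG_\alpha(z_{k-1}) - \opG_\alpha(w_{k-1})\|^2.
\]
Denoting $g := \opG_\alpha(z_{k-1}) - \opG_\alpha(w_{k-1})$, expanding $\|e_{k-1} - \alpha g\|^2 = \|e_{k-1}\|^2 - 2\alpha\langle e_{k-1}, g\rangle + \alpha^2\|g\|^2$ and plugging in the previous inequality yields $\|e_{k-1} - \alpha g\|^2 \le \|e_{k-1}\|^2 - \alpha^2\|g\|^2 \le \|e_{k-1}\|^2$. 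Combined with $1-\beta_{k-1} = k/(k+1)$, this gives $(k+1)\|\xi_k - u_k\| \le k\|e_{k-1}\|$, which is \eqref{eqn:APG_xi_u_difference_bound}.

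\textbf{Step 3: deduce \eqref{eqn:APG_z_u_difference_bound}.} Write
\[
z_k + \alpha\opB z_k - u_k \;=\; \bigl(z_k + \alpha\opB z_k - \xi_k\bigr) + (\xi_k - u_k),
\]
apply the triangle inequality, and use $\|z_k + \alpha\opB z_k - \xi_k\| \le \epsilon_k$ (the stopping criterion of the inner \ref{eqn:SM-EAG+} loop) together with \eqref{eqn:APG_xi_u_difference_bound} after multiplying by $k+1$.

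\textbf{Main obstacle.} The only nontrivial step is recognizing that the proper algebraic split is through $\opG_\alpha$ and that Lemma~\ref{lem:fb_residual_key_inequality} delivers exactly the inner-product bound needed to upgrade $\|e_{k-1} - \alpha g\| \le \|e_{k-1}\| + \alpha\|g\|$ (which would be useless) to $\|e_{k-1} - \alpha g\| \le \|e_{k-1}\|$. Once this is set up, the rest is bookkeeping with $\beta_{k-1} = 1/(k+1)$ and the triangle inequality.
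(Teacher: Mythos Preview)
Your proposal is correct and follows essentially the same route as the paper's proof: the same algebraic rewriting $\JA(v-\alpha\opB v)+\alpha\opB v=(\opI+\alpha\opB)(v)-\alpha\opG_\alpha(v)$, the same application of Lemma~\ref{lem:fb_residual_key_inequality} to bound $\|e_{k-1}-\alpha g\|^2\le\|e_{k-1}\|^2$, and the same triangle-inequality argument for~\eqref{eqn:APG_z_u_difference_bound}. The only cosmetic difference is that you introduce the shorthand $e_{k-1}$ and $g$, whereas the paper writes these quantities out in full.
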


\begin{proof}
Observe that $\JA(z - \alpha \opB z) + \alpha\opB z = z + \alpha \opB z - \alpha \opG_\alpha (z)$ for any $z\in \reals^d$, so
\begin{align*}
    \xi_{k} = \frac{1}{k+1} \xi_0 + \frac{k}{k+1} (z_{k-1} + \alpha \opB z_{k-1} - \alpha \opG_\alpha (z_{k-1})).
\end{align*}
Similarly, we have
\begin{align*}
    u_{k} 
    & = \frac{1}{k+1} u_0 + \frac{k}{k+1} (w_{k-1} + \alpha \opB w_{k-1} - \alpha \opG_\alpha (w_{k-1})) .
\end{align*}
Therefore, provided that $u_0 = \xi_0$,
\begin{align*}
    \xi_{k} - u_{k} = \frac{k}{k+1} \left( (\opI + \alpha\opB) (z_{k-1}) - (\opI + \alpha\opB) (w_{k-1}) - \alpha(\opG_\alpha (z_{k-1}) - \opG_\alpha (w_{k-1})) \right) ,
\end{align*}
which gives
\begin{align*}
    \|\xi_{k} - u_{k}\|^2 & = \left(\frac{k}{k+1}\right)^2 \left\| (\opI + \alpha\opB) (z_{k-1}) - (\opI + \alpha\opB) (w_{k-1}) - \alpha(\opG_\alpha (z_{k-1}) - \opG_\alpha (w_{k-1})) \right\|^2 \\
    & = \left(\frac{k}{k+1}\right)^2 \big( \|(\opI + \alpha\opB)(z_{k-1}) - (\opI + \alpha\opB)(w_{k-1})\|^2  + \alpha^2 \|\opG_\alpha(z_{k-1}) - \opG_\alpha(w_{k-1})\|^2 \\
    & \quad \quad \quad \quad \quad \quad \quad - 2\alpha \left\langle (\opI + \alpha\opB)(z_{k-1}) - (\opI + \alpha\opB)(w_{k-1}) , \opG_\alpha (z_{k-1}) - \opG_\alpha (w_{k-1}) \right\rangle \big) \\
    & \le \left(\frac{k}{k+1}\right)^2 \big( \|(\opI + \alpha\opB)(z_{k-1}) - (\opI + \alpha\opB)(w_{k-1})\|^2  - \alpha^2 \|\opG_\alpha(z_{k-1}) - \opG_\alpha(w_{k-1})\|^2 \big) \\
    & \le \left(\frac{k}{k+1}\right)^2 \|(\opI + \alpha\opB)(z_{k-1}) - (\opI + \alpha\opB)(w_{k-1})\|^2 \\
    & = \left(\frac{k}{k+1}\right)^2 \|(\opI + \alpha\opB)(z_{k-1}) - u_{k-1}\|^2
\end{align*}
where the first inequality follows from \cref{lem:fb_residual_key_inequality} and the last equality from $w_{k-1} = \JB(u_{k-1})$.
This proves~\eqref{eqn:APG_xi_u_difference_bound}.
Then \eqref{eqn:APG_z_u_difference_bound} follows from
\begin{align*}
    \|z_{k} + \alpha\opB z_{k} - u_{k}\| & \le \|z_{k} + \alpha \opB z_{k} - \xi_{k}\| + \|\xi_{k} - u_{k}\| \\
    & \le \epsilon_{k} + \frac{k}{k+1} \|(z_{k-1} + \alpha\opB z_{k-1}) - u_{k-1}\| .
\end{align*}
\end{proof}

We now prove \cref{thm:APG_MP}.

\begin{proof}[Proof of \cref{thm:APG_MP}]

Let $M = 1 + \frac{1}{L}\|\opB \xi_0\|$, so that $\epsilon_j = \frac{M}{(j+1)^2 (j+2)}$ and
\begin{align*}
    M = \sum_{j=0}^\infty (j+1) \epsilon_j .
\end{align*}
Then by \cref{lem:fb_error_bound}, for each $k=1,2,\dots$,
\begin{align*}
    (k+1)\|\xi_{k} - u_{k}\| & \le k\|z_{k-1} + \alpha\opB z_{k-1} - u_{k-1}\| \\
    & \le (k-1) \|z_{k-2} + \alpha\opB z_{k-2} - u_{k-2}\| + k \epsilon_k \\
    & \le \cdots \le \|z_0 + \alpha\opB z_0 - u_0\| + \sum_{j=1}^k (j+1) \epsilon_j \\
    & \le \sum_{j=0}^k (j+1) \epsilon_j \le M .
\end{align*}
This shows that $\|\xi_k - u_k\|^2 \le \left(\frac{M}{k+1}\right)^2$ for $k=0,1,\dots$ (note that $\|\xi_0 - u_0\| = 0$).
Additionally, for each $k=0,1,\dots$ we have $(k+1)\|z_k + \alpha\opB z_k - u_k\| \le M$, and because $\JB = (\opI + \alpha\opB)^{-1}$ is nonexpansive, we have
\begin{align*}
    (k+1)\|z_k - w_k\| & = (k+1)\|\JB \circ (\opI + \alpha\opB)(z_k) - \JB(u_k)\| \\
    & \le (k+1)\|z_k + \alpha\opB z_k - u_k\| \le M ,
\end{align*}
i.e., $\|z_k - w_k\| \le \left(\frac{M}{k+1}\right)^2$.
Finally, the proof is complete by using
\begin{align*}
    M \le 1 + \frac{1}{L}(\|\opB \xi_\star\| + \|\opB \xi_0 - \opB \xi_\star\|) \le 1 + \frac{1}{L}\|\opB \xi_\star\| + \|\xi_0 - \xi_\star\| = \frac{C(\xi_0)}{L} .
\end{align*}

\end{proof}

\subsection{Proof of \texorpdfstring{\cref{cor:APG_convergence}}{Corollary 5.2}}

As $\TDRS$ is nonexpansive \cite{LionsMercier1979_splitting} and the iterates of OHM converge to the nearest fixed point to the initial point \cite[Theorem~30.1]{BauschkeCombettes2017_convex}, we have $u_k \to \proj_{\mathrm{Fix}(\opT_{\textrm{DRS}})}(u_0) = \proj_{\mathrm{Fix}(\opT_{\textrm{DRS}})}(\xi_0) = \xi_\star$.
Because $\JB$ is nonexpansive, this implies $w_k = \JB(u_k) \to \JB(\xi_\star) \in \zer(\opA + \opB)$.
Due to the MP property proved in \cref{thm:APG_MP}, we immediately obtain $\xi_k \to \xi_\star$ and $z_k \to \JB(\xi_\star)$.

Next, note that \cref{lem:fb_residual_key_inequality} implies that $\opG_\alpha$ is $\left(\frac{1+\alpha L}{\alpha}\right)$-Lipschitz, because
\begin{align*}
    \alpha \|\opG_\alpha (v) - \opG_\alpha (w)\|^2 & \le \left\langle \opG_\alpha (v) - \opG_\alpha (w), (\opI + \alpha\opB)(v) - (\opI + \alpha\opB)(w) \right\rangle \\
    & \le \left\| \opG_\alpha (v) - \opG_\alpha (w) \right\| \left\| (\opI + \alpha\opB)(v) - (\opI + \alpha\opB)(w) \right\| \\
    & \le \left\| \opG_\alpha (v) - \opG_\alpha (w) \right\| (1+\alpha L) \|v-w\|
\end{align*}
for any $v,w\in\reals^d$.
Therefore, we have
\begin{align}
    \|\opG_\alpha (z_k)\|^2 & = \|\opG_\alpha (w_k) + \left(\opG_\alpha (z_k) - \opG_\alpha (w_k)\right)\|^2 \nonumber \\
    & \le (1+\nu) \|\opG_\alpha (w_k)\|^2 + \left(1+\frac{1}{\nu}\right) \|\opG_\alpha (z_k) - \opG_\alpha (w_k)\|^2 \label{eqn:fb_residual_bound_young} \\
    & \le (1+\nu) \frac{4\|\xi_0 - \xi_\star\|^2}{\alpha^2 (k+1)^2} + \left(1+\frac{1}{\nu}\right) \left(\frac{1+\alpha L}{\alpha}\right)^2 \|z_k - w_k\|^2 \label{eqn:fb_residual_bound_OHM_rate_and_Lipschitzness} \\
    & \le (1+\nu) \frac{4C(\xi_0)^2}{\alpha^2 L^2 (k+1)^2} + \left(1+\frac{1}{\nu}\right) \left(\frac{1+\alpha L}{\alpha}\right)^2 \frac{C(\xi_0)^2}{L^2 (k+1)^2} \label{eqn:fb_residual_bound_MP} \\
    & = \frac{C(\xi_0)^2}{\alpha^2 L^2 (k+1)^2} \left( 4(1+\nu) + \left(1+\frac{1}{\nu}\right) (1+\alpha L)^2 \right) \label{eqn:fb_residual_bound_nu} 
\end{align}
where $\nu>0$ is arbitrary, \eqref{eqn:fb_residual_bound_young} uses Young's inequality, \eqref{eqn:fb_residual_bound_OHM_rate_and_Lipschitzness} uses the convergence rate of OHM and Lipschitzness of $\opG_\alpha$, and \eqref{eqn:fb_residual_bound_MP} uses $L\|\xi_0-\xi_\star\| \le C(\xi_0)$ and \cref{thm:APG_MP}.
Now choosing $\nu=\frac{1+\alpha L}{2}$, which minimizes the quantity \eqref{eqn:fb_residual_bound_nu}, we obtain
\begin{align*}
    \|\opG_\alpha (z_k)\|^2 \le \frac{C(\xi_0)^2}{\alpha^2 L^2 (k+1)^2} \left( 4 + (1+\alpha L)^2 + 4(1+\alpha L) \right) = \frac{(3+\alpha L)^2 C(\xi_0)^2}{\alpha^2 L^2 (k+1)^2} ,
\end{align*}
which completes the proof.

\subsection{Proof of \texorpdfstring{\cref{cor:APG_complexity}}{Corollary 5.3}}

We first state and prove some lemmas, which will be useful for the main proof.

\begin{lemma}
\label{lem:fixed_point_residual_bound}
Let $\opB\colon \reals^d \to \reals^d$ be a monotone, $L$-Lipschitz operator, and let $\alpha \in \left(0,\frac{1}{L}\right)$.
Then for any $u \in \reals^d$,
\begin{align*}
    \|u - \JB(u)\| \le \frac{\alpha}{1-\alpha L}\|\opB u\| .
\end{align*}
\end{lemma}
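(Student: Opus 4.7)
The plan is to set $v = \JB(u)$ and use the resolvent identity $u - v = \alpha \opB v$, then bound $\|\opB v\|$ in terms of $\|\opB u\|$ via the Lipschitz property.

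First I would write $v = \JB(u) = (\opI + \alpha\opB)^{-1}(u)$, which is equivalent to $u = v + \alpha\opB v$, so that $u - \JB(u) = u - v = \alpha \opB v$ and hence $\|u - \JB(u)\| = \alpha \|\opB v\|$. This reduces the task to showing $\|\opB v\| \le \frac{1}{1-\alpha L}\|\opB u\|$.

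Next, applying the triangle inequality and the $L$-Lipschitz continuity of $\opB$ together with $\|u - v\| = \alpha\|\opB v\|$, I would write
\begin{align*}
    \|\opB v\| \le \|\opB u\| + \|\opB v - \opB u\| \le \|\opB u\| + L\|v-u\| = \|\opB u\| + \alpha L \|\opB v\|.
\end{align*}
Since $\alpha L < 1$, I can rearrange to obtain $(1 - \alpha L)\|\opB v\| \le \|\opB u\|$, i.e., $\|\opB v\| \le \frac{1}{1-\alpha L}\|\opB u\|$. Multiplying by $\alpha$ yields the desired bound.

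There is no real obstacle here; the condition $\alpha < 1/L$ is precisely what guarantees the absorbing coefficient $1 - \alpha L > 0$ needed to solve for $\|\opB v\|$. Monotonicity of $\opB$ is not even used — only single-valuedness of $\JB$ (which follows from $\opB$ being maximal monotone, itself implied by monotonicity, continuity, and single-valuedness via \cite[Corollary~20.28]{BauschkeCombettes2017_convex}) and $L$-Lipschitzness.
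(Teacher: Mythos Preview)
Your proof is correct and essentially identical to the paper's: both set $v=\JB(u)$, use $u-v=\alpha\opB v$, apply the triangle inequality and $L$-Lipschitzness to get $\|\opB v\|\le \|\opB u\|+\alpha L\|\opB v\|$, and rearrange using $\alpha L<1$. Your observation that monotonicity is not actually used in the argument is also accurate.
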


\begin{proof}
Let $v = \JB(u)$. Then $u = v + \alpha \opB v$, so
\begin{align*}
    \|\opB v\| - \|\opB u\| \le \|\opB v - \opB u\| \le L\|v - u\| = \alpha L\|\opB v\| \implies \|\opB v\| \le \frac{1}{1-\alpha L}\|\opB u\| .
\end{align*}
Therefore,
\begin{align*}
    \|u - \JB(u)\| = \|u - v\| = \alpha \|\opB v\| \le \frac{\alpha}{1-\alpha L} \|\opB u\| .
\end{align*}
\end{proof}

\begin{lemma}
\label{lem:iterate_boundedness}
Let $\xi_\star \in \fix\TDRS$ and $z_\star = \JB(\xi_\star)$.
Let $\epsilon_k > 0$ ($k=0,1,\dots$) satisfy
\begin{align*}
    \sum_{k=0}^\infty (k+1)\epsilon_k = M < \infty .
\end{align*}
Then \ref{eqn:APG*} iterates $\xi_k, z_k$ satisfy $\max\left\{ \|\xi_k - \xi_\star\|, \|z_k - z_\star\| \right\} \le \|\xi_0 - \xi_\star\| + M$ for $k=0,1,\dots$. 
\end{lemma}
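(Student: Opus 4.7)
The plan is to show the $\xi_k$ bound inductively using nonexpansiveness of $\TDRS$, then transfer it to $z_k$ via nonexpansiveness of $\JB$. The key observation is that since $\xi_\star\in\fix\TDRS$ and $z_\star=\JB(\xi_\star)$, we automatically have $(\opI+\alpha\opB)(z_\star)=\xi_\star$ and
\[
    \xi_\star \;=\; \TDRS(\xi_\star) \;=\; \JA(z_\star-\alpha\opB z_\star)+\alpha\opB z_\star ,
\]
so $\xi_\star$ is a fixed point of the operation being anchored in the second line of~\ref{eqn:APG*}.

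First, I would quantify the inner-loop error. Writing $\hat w_k:=\JB(\xi_k)$, the stopping criterion $\|(\opI+\alpha\opB)(z_k)-\xi_k\|\le\epsilon_k$ is the same as $\|(\opI+\alpha\opB)(z_k)-(\opI+\alpha\opB)(\hat w_k)\|\le\epsilon_k$, and nonexpansiveness of $\JB=(\opI+\alpha\opB)^{-1}$ then gives $\|z_k-\hat w_k\|\le\epsilon_k$. Next, using the identity $\JA(z-\alpha\opB z)+\alpha\opB z=(\opI+\alpha\opB)(z)-\alpha\opG_\alpha(z)$ and the fact that $\TDRS(\xi_k)=(\opI+\alpha\opB)(\hat w_k)-\alpha\opG_\alpha(\hat w_k)=\xi_k-\alpha\opG_\alpha(\hat w_k)$, I set $\Delta_1=(\opI+\alpha\opB)(z_k)-\xi_k$ and $\Delta_2=\opG_\alpha(z_k)-\opG_\alpha(\hat w_k)$ and expand
\[
    \bigl\|\JA(z_k-\alpha\opB z_k)+\alpha\opB z_k-\TDRS(\xi_k)\bigr\|^2
    \;=\; \|\Delta_1\|^2+\alpha^2\|\Delta_2\|^2-2\alpha\langle\Delta_1,\Delta_2\rangle .
\]
Applying Lemma~\ref{lem:fb_residual_key_inequality} with $v=z_k$, $w=\hat w_k$ yields $\alpha\|\Delta_2\|^2\le\langle\Delta_1,\Delta_2\rangle$, which collapses the right-hand side to at most $\|\Delta_1\|^2\le\epsilon_k^2$. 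This is the central step, and it is the one that takes the most care: the Moreau--Yosida-type cocoercivity is exactly what lets the true-and-approximate DRS images differ by at most the inner-loop tolerance.

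Second, I would plug this into the \ref{eqn:APG*} recursion. Subtracting $\xi_\star=\beta_k\xi_\star+(1-\beta_k)\TDRS(\xi_\star)$ and applying nonexpansiveness of $\TDRS$ gives
\[
    \|\xi_{k+1}-\xi_\star\|\;\le\;\beta_k\|\xi_0-\xi_\star\|+(1-\beta_k)\|\xi_k-\xi_\star\|+\epsilon_k .
\]
The ansatz $\|\xi_k-\xi_\star\|\le\|\xi_0-\xi_\star\|+\sum_{j=0}^{k-1}\epsilon_j$ is preserved by this recursion, because the convex combination of $\|\xi_0-\xi_\star\|$ with $\|\xi_0-\xi_\star\|+\sum_{j<k}\epsilon_j$ is bounded by the latter. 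Since $\sum_{j=0}^{\infty}\epsilon_j\le\sum_{j=0}^{\infty}(j+1)\epsilon_j=M$, this establishes $\|\xi_k-\xi_\star\|\le\|\xi_0-\xi_\star\|+M$ for all $k$.

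Finally, for the $z_k$ bound I would use the triangle inequality and nonexpansiveness of $\JB$:
\[
    \|z_k-z_\star\|\;\le\;\|z_k-\hat w_k\|+\|\JB(\xi_k)-\JB(\xi_\star)\|\;\le\;\epsilon_k+\|\xi_k-\xi_\star\|\;\le\;\|\xi_0-\xi_\star\|+\sum_{j=0}^{k}\epsilon_j\;\le\;\|\xi_0-\xi_\star\|+M,
\]
giving the claim. The only nontrivial piece of the argument is the Lemma~\ref{lem:fb_residual_key_inequality} application that controls the DRS approximation error by $\epsilon_k$; everything else is a standard Halpern-style induction once that control is in place.
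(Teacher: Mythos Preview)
Your proof is correct, but it takes a different route from the paper's. The paper argues indirectly via the MP property: it introduces the exact \ref{eqn:DRS-Halpern_alternate_form} iterates $u_k,w_k$ with $u_0=\xi_0$, shows the standard Halpern boundedness $\|u_k-\xi_\star\|\le\|\xi_0-\xi_\star\|$, and then invokes Theorem~\ref{thm:APG_MP} (whose proof, via Lemma~\ref{lem:fb_error_bound}, already contains the Lemma~\ref{lem:fb_residual_key_inequality} application) to get $\|\xi_k-u_k\|\le M/(k+1)$ and $\|z_k-w_k\|\le M/(k+1)$; the claim follows by the triangle inequality. Your argument bypasses the comparison iterates entirely: you apply Lemma~\ref{lem:fb_residual_key_inequality} directly with $v=z_k$, $w=\JB(\xi_k)$ to show that the approximate DRS image $\JA(z_k-\alpha\opB z_k)+\alpha\opB z_k$ is within $\epsilon_k$ of $\TDRS(\xi_k)$, and then run a perturbed-Halpern induction on $\|\xi_k-\xi_\star\|$ itself. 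Both approaches hinge on the same cocoercivity inequality, but yours is more self-contained (it does not rely on Theorem~\ref{thm:APG_MP}) and slightly more elementary, at the cost of not exhibiting the sharper $M/(k+1)$ decay that the paper's detour through the MP bound provides.
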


\begin{proof}
Let $u_k, w_k$ be the iterates of~\ref{eqn:DRS-Halpern_alternate_form} with $u_0 = \xi_0$.
Because $\xi_\star \in \mathrm{Fix}(\opT_{\textrm{DRS}})$ and $\opT_{\textrm{DRS}}$ is nonexpansive, we have 
\begin{align*}
    \|u_{k+1} - \xi_\star\| & = \left\| \beta_k u_0 + (1-\beta_k) \TDRS(u_k) - \xi_\star \right\| \\
    & \le \beta_k \|u_0 - \xi_\star\| + (1-\beta_k) \|\opT_{\textrm{DRS}}(u_k) - \xi_\star\| \\
    & \le \beta_k \|\xi_0 - \xi_\star\| + (1-\beta_k) \|u_k - \xi_\star\|
\end{align*}
for $k=0,1,\dots$. 
Therefore, by induction on $k$, $\|u_k-\xi_\star\| \le \|\xi_0-\xi_\star\|$ for all $k=0,1,\dots$.
Then by \cref{thm:APG_MP},
\begin{align*}
    \|\xi_k - \xi_\star\| \le \|\xi_k - u_k\| + \|u_k - \xi_\star\| \le \frac{M}{k+1} + \|\xi_0 - \xi_\star\| \le \|\xi_0 - \xi_\star\| + M 
\end{align*}
and
\begin{align*}
    \|z_k - z_\star\| & \le \|z_k - w_k\| + \|w_k - z_\star\| \le \frac{M}{k+1} + \|\JB(u_k) - \JB(\xi_\star)\| \\
    & \le \frac{M}{k+1} + \|u_k - \xi_\star\| \le \|\xi_0 - \xi_\star\| + M 
\end{align*}
for $k=0,1,\dots$.
\end{proof}

We now prove \cref{cor:APG_complexity}.

\begin{proof}[Proof of \cref{cor:APG_complexity}]

Recall that each outer loop of \ref{eqn:APG*} performs a single evaluation of $\JA$, a forward evaluation $\opB z_k$, and an inner loop, which starts with $z_k^{(0)} = \xi_{k}$ and solves 
\begin{align}
    \begin{array}{ll}
        \underset{z \in \reals^{d}}{\textrm{find}} & 0 = z + \alpha \opB z - \xi_k
    \end{array}
    \label{eqn:SASH-DRS_inner_loop_appendix}
\end{align}
using \ref{eqn:SM-EAG+}.
The distance from the initial point $z_k^{(0)}$ to the true solution $\JB(\xi_{k})$ of~\eqref{eqn:SASH-DRS_inner_loop_appendix} is 
\begin{align*}
    d_{k}^{(0)} & := \|z_k^{(0)} - \JB(\xi_k)\| = \|\xi_k - \JB(\xi_k)\| \le \frac{\alpha}{1-\alpha L} \|\opB \xi_k\| 
\end{align*}
where the inequality holds due to \cref{lem:fixed_point_residual_bound}.
Observe that by $L$-Lipschitzness of $\opB$ and \cref{lem:iterate_boundedness}, we have
\begin{align*}
    \|\opB \xi_k\| \le \|\opB \xi_\star\| + \|\opB \xi_k - \opB \xi_\star\| \le \|\opB \xi_\star\| + L \|\xi_k - \xi_\star\| \le \|\opB \xi_\star\| + L \|\xi_0 - \xi_\star\| + LM ,
\end{align*}
so $d_k^{(0)} \le \frac{\alpha}{1-\alpha L} \left(\|\opB \xi_\star\| + L \|\xi_0 - \xi_\star\| + LM\right)$.
Because the inner loop objective~\eqref{eqn:SASH-DRS_inner_loop_appendix} is $1$-strongly monotone and $(\alpha L+1)$-Lipschitz, by \cref{thm:sm-EAG}, the total number of inner loop iterations needed to achieve the error $\|z+\alpha\opB z - \xi_k\| \le \epsilon_{k}$ is
\begin{align*}
    \mathcal{O}\left( (\alpha L+1) \log \frac{d_{k}^{(0)}}{\epsilon_{k}} \right) &= \mathcal{O} \left( (\alpha L+1) \log \frac{\frac{\alpha}{1-\alpha L} \left(\|\opB \xi_\star\| + L \|\xi_0 - \xi_\star\| + LM\right)}{M (k+1)^{-2}(k+2)^{-1}} \right) \\
    &= \mathcal{O}\left( \log \frac{k (\|\opB \xi_0\| + 2L\|\xi_0 - \xi_\star\| + LM)}{LM} \right) 
\end{align*}
where the last $\cO$ expression holds because $\alpha L \in (0,1)$ is a constant.
Now recalling that $M = 1 + \frac{1}{L}\|\opB \xi_0\|$, we obtain
\begin{align*}
    \log \frac{k (\|\opB \xi_0\| + 2L\|\xi_0 - \xi_\star\| + LM)}{LM}
    & \le \log \left( k \left( \frac{\|\opB \xi_0\|}{LM} + \frac{2\|\xi_0-\xi_\star\|}{M} + 1 \right) \right) \\
    & \le \log \left( k \left( 1 + 2\|\xi_0-\xi_\star\| + 1 \right) \right) \\
    & = \mathcal{O} ( \log (k(\|\xi_0-\xi_\star\|+1)) ) .
\end{align*}
By \cref{cor:APG_convergence}, it requires at most $\mathcal{O}\left( \frac{C(\xi_0)}{\epsilon} \right) = \mathcal{O}\left(\frac{L(\|\xi_0-\xi_\star\|+1)+\|\opB \xi_\star\|}{\epsilon}\right)$ to achieve $\|\opG_\alpha (z_k)\| \le \epsilon$.
Up to this range of $k$, each inner loop performs
\begin{align*}
    \mathcal{O} ( \log (k(\|\xi_0-\xi_\star\|+1)) ) = \mathcal{O}\left( \log \frac{L(\|\xi_0-\xi_\star\|+1)+\|\opB \xi_\star\|}{\epsilon} \right) = \mathcal{O}\left( \log \frac{C(\xi_0)}{\epsilon} \right)
\end{align*}
evaluations of $\opB$.
Therefore, the total computational cost of \ref{eqn:APG*} is
\begin{align*}
    \mathcal{O}\left( \frac{C(\xi_0)}{\epsilon} \left( C_\opA + C_\opB \log \frac{C(\xi_0)}{\epsilon} \right) \right) .
\end{align*}

\end{proof}

\section{Proof of the remaining summability lemmas}
\label{sec:summability_lemmas}

We first provide a handy lemma, which will be used in the proofs of Lemmas~\ref{lem:EAG_summability} and \ref{lem:APS_summability} for arguing the positivity of certain expressions.

\begin{lemma}
\label{lem:positive_for_small_stepsizes}
Let $N$ be a fixed positive integer.
Suppose $H^{(\ell)}(r)$, $\ell=0,\dots,N$, are continuous functions in $r \in \reals$, and consider the expression
\begin{align*}
    H(r, k) = H^{(N)}(r) k^N + \cdots + H^{(1)}(r) k + H^{(0)}.
\end{align*}
If $H^{(N)}(0) > 0$ and $H^{(\ell)}(0) \ge 0$ for all $\ell=0,\dots,N-1$, then there exists $\eta > 0$ such that $H(r,k) > 0$ for all $r \in (0,\eta)$ and $k=1,2,\dots$.
\end{lemma}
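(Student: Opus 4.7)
The plan is to exploit continuity of the coefficient functions together with the fact that, uniformly over $k \ge 1$, the leading term $H^{(N)}(r) k^N$ dominates the subleading terms provided that $H^{(N)}(r)$ stays bounded below by a positive constant and that the other $H^{(\ell)}(r)$ stay bounded below by a sufficiently small (possibly negative) constant. Concretely, set $c := H^{(N)}(0) > 0$. By continuity of $H^{(N)}$ at $0$, I choose $\eta_0 > 0$ with $H^{(N)}(r) \ge c/2$ for all $r \in [0,\eta_0]$. By continuity of each $H^{(\ell)}$ at $0$ together with $H^{(\ell)}(0) \ge 0$, I further shrink $\eta \in (0, \eta_0]$ so that $H^{(\ell)}(r) \ge -\tfrac{c}{4N}$ for every $\ell = 0, 1, \dots, N-1$ and every $r \in [0, \eta]$.

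With these choices, for every $r \in (0,\eta)$ and every integer $k \ge 1$, I estimate
\begin{align*}
    H(r,k)
    &\ge \frac{c}{2}\, k^N - \frac{c}{4N} \sum_{\ell=0}^{N-1} k^\ell
    \ge \frac{c}{2}\, k^N - \frac{c}{4N} \cdot N\, k^{N-1}
    = \frac{c\, k^{N-1}}{4}\bigl( 2k - 1 \bigr) > 0 ,
\end{align*}
where I used $\sum_{\ell=0}^{N-1} k^\ell \le N k^{N-1}$ for $k \ge 1$, and then $2k - 1 \ge 1$ for $k \ge 1$. This gives the conclusion.

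There is no substantive obstacle here; the lemma is essentially a uniform-in-$k$ continuity argument for a polynomial in $k$ whose coefficients depend continuously on the parameter $r$. The only point requiring mild care is to bound the subleading coefficients from below (not just in absolute value), which is why the hypothesis $H^{(\ell)}(0) \ge 0$ (rather than merely boundedness) is needed, and to ensure the bound applies uniformly over all $k \ge 1$ by factoring out $k^{N-1}$ and using $k \ge 1$ once more.
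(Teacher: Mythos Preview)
Your proof is correct and follows essentially the same approach as the paper: use continuity at $r=0$ to control the coefficient functions, then let the leading term $H^{(N)}(r)k^N$ dominate the sum of subleading terms uniformly over $k\ge 1$ via $k^\ell \le k^{N-1}$. The only cosmetic difference is that the paper partitions the lower-order indices into those with $H^{(\ell)}(0)=0$ (controlled in absolute value) and those with $H^{(\ell)}(0)>0$ (kept strictly positive), whereas your uniform lower bound $H^{(\ell)}(r)\ge -c/(4N)$ handles both cases at once and is slightly cleaner.
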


\begin{proof}
Let $J = \left\{\ell \in \{0,\dots,N-1\} \,|\, H^{(\ell)}(0) = 0\right\}$ (so $H^{(\ell)}(r) > 0$ for $\ell \in \{0,\dots,N-1\}\setminus J$).
As $H^{(\ell)}(r)$ are continuous, one can choose $\eta>0$ small enough so that $|r|<\eta$ implies
\begin{align*}
    \sum_{\ell\in J} \left| H^{(\ell)}(r) \right| < \frac{H^{(N)}(0)}{2} 
\end{align*}
and $H^{(\ell)}(r) > 0$ for all $\ell \in \{0,\dots,N-1\}\setminus J$.
Then for $k=1,2,\dots$, we have
\begin{align*}
    H(r,k) & = k^N \left( H^{(N)}(0) + \sum_{\ell=0}^{N-1} H^{(\ell)}(r) k^{\ell-N} \right) \\
    & > k^N \left( H^{(N)}(0) + \sum_{\ell \in J} H^{(\ell)}(r) k^{\ell-N} \right) \\
    & \ge k^N \left( H^{(N)}(0) - \sum_{\ell \in J} \left|H^{(\ell)}(r)\right| k^{\ell-N} \right) \\
    & \ge k^N \left( H^{(N)}(0) - \sum_{\ell \in J} \left|H^{(\ell)}(r)\right| \right) \\
    & > \frac{H^{(N)}(0) k^N}{2} > 0
\end{align*}
provided that $r\in (0,\eta)$.
\end{proof}

\begin{proof}[Proof of Lemma~\ref{lem:EAG_summability}]

We show the given statement by establishing the following result: by defining the sequences $p_k, q_k$ by $q_k = k$ ($k=0,1,\dots$) and
\begin{align*}
    p_0 & = \frac{\alpha (1 - 3\alpha L + 6\alpha^2 L^2 - 2\alpha^4 L^4)}{2 (1-\alpha L)^2 (1+\alpha L) (2+\alpha L)} \\
    p_k & = \frac{\alpha (k+1)(k+\alpha L(k-1))}{2(1+\alpha L)}, \quad k=1,2,\dots ,
\end{align*}
we have $p_0 > 0$,
\begin{align*}
    V_k = p_k \|\opB z_k\|^2 + q_k \langle \opB z_k, z_k - z_0 \rangle + \frac{1}{2\alpha} \|z_0 - z_\star\|^2 \ge 0 ,
\end{align*}
and
\begin{align*}
    V_k - V_{k+1} \ge \epsilon (k+1)^2 \|\opB z_k - \opB z_{k+1/2}\|^2 
\end{align*}
for $\alpha \in \left(0, \frac{\eta}{L}\right)$, $\epsilon = \frac{\alpha^2 L (1-\alpha^2 L^2)}{2}$ with $\eta \in (0,1)$ sufficiently small.
Once this is shown, we can conclude that for any positive integer $N$,
\begin{align*}
    \frac{\alpha^2 L (1 - \alpha^2 L^2)}{2} \sum_{k=0}^N (k+1)^2 \|\opB z_k - \opB z_{k+1/2}\|^2 & \le V_0 - V_{N+1} \\
    & \le V_0 = p_0 \|\opB z_0\|^2 + \frac{1}{2\alpha} \|z_0 - z_\star\|^2 \\
    & \le \left( p_0 L^2 + \frac{1}{2\alpha} \right) \|z_0 - z_\star\|^2 
\end{align*}
holds, which gives
\begin{align*}
    \sum_{k=0}^\infty (k+1)^2 \|\opB z_k - \opB z_{k+1/2}\|^2 \le \frac{2}{\alpha^2 L (1 - \alpha^2 L^2)} \left( p_0 L^2 + \frac{1}{2\alpha} \right) \|z_0 - z_\star\|^2 = \frac{C}{\alpha^2} \|z_0 - z_\star\|^2
\end{align*}
where
\begin{align*}
    C = \frac{2 - r - 2r^2 - 2r^3 + 7r^4 - 2r^6}{r(1-r)^3 (1+r)^2 (2+r)} \quad (r = \alpha L)
\end{align*}
is a positive constant provided that $r < \eta$ for $\eta$ small enough.

\paragraph{Case $k=0$.}
We have $z_{1/2} = z_0 - \alpha \opB z_0$ and $z_1 = z_0 - \alpha \opB z_{1/2}$.
By monotonicity and Lipschitzness of $\opB$, we have
\begin{align}
    & V_1 + \frac{\alpha^2 L (1-\alpha^2 L^2)}{2} \|\opB z_0 - \opB z_{1/2}\|^2 \nonumber \\
    \label{eqn:EAG_C_Lyapunov_initial}
    & = \frac{\alpha}{1+\alpha L}\|\opB z_1\|^2 + \langle \opB z_1, z_1 - z_0 \rangle + \frac{1}{2\alpha} \|z_0 - z_\star\|^2 + \frac{\alpha^2 L (1-\alpha^2 L^2)}{2} \|\opB z_0 - \opB z_{1/2}\|^2 \\
    & \le \frac{\alpha}{1+\alpha L}\|\opB z_1\|^2 + \langle \opB z_1, z_1 - z_0 \rangle + \frac{1}{2\alpha} \|z_0 - z_\star\|^2 + \frac{\alpha^2 L (1-\alpha^2 L^2)}{2} \|\opB z_0 - \opB z_{1/2}\|^2 \nonumber \\
    & \quad + \frac{\alpha}{1+\alpha L} \left( L^2 \|z_{1/2} - z_1\|^2 - \|\opB z_{1/2} - \opB z_1\|^2 \right) + \frac{1-\alpha L}{1+\alpha L} \langle z_0 - z_1, \opB z_0 - \opB z_1 \rangle \nonumber \\
    & = \frac{\alpha}{1+\alpha L}\|\opB z_1\|^2 + \langle \opB z_1, -\alpha\opB z_{1/2} \rangle + \frac{1}{2\alpha} \|z_0 - z_\star\|^2 + \frac{\alpha^2 L (1-\alpha^2 L^2)}{2} \|\opB z_0 - \opB z_{1/2}\|^2 \nonumber \\
    & \quad + \frac{\alpha}{1+\alpha L} \left( \alpha^2 L^2 \|\opB z_0 - \opB z_{1/2}\|^2 - \|\opB z_{1/2} - \opB z_1\|^2 \right) + \frac{1-\alpha L}{1+\alpha L} \langle \alpha \opB z_{1/2}, \opB z_0 - \opB z_1 \rangle \nonumber \\
    &
    \begin{aligned}
    & = \frac{\alpha}{2(1+r)} \big( r (1 + 3r - r^2 - r^3) \|\opB z_0\|^2 - (1-r)^2 (1+r) (2+r) \|\opB z_{1/2}\|^2 \\
    & \quad \quad \quad \quad \quad + 2(1+r)(1 - 3r + r^3) \langle \opB z_0, \opB z_{1/2} \rangle \big) + \frac{1}{2\alpha} \|z_0 - z_\star\|^2
    \end{aligned}
    \label{eqn:EAG_C_summability_initial}
\end{align}
where $r = \alpha L$.
Next, using Young's inequality, we can bound the inner product term in~\eqref{eqn:EAG_C_summability_initial} as
\begin{align*}
    & 2(1+r)(1 - 3r + r^3) \langle \opB z_0, \opB z_{1/2} \rangle \\
    & \le (1-r)^2 (1+r) (2+r) \|\opB z_{1/2}\|^2 + \frac{(1+r)(1-3r+r^3)^2}{(1-r)^2 (2+r)} \|\opB z_0\|^2 .
\end{align*}
Plugging this back into~\eqref{eqn:EAG_C_summability_initial} and rearranging, we obtain
\begin{align*}
    V_1 + \frac{\alpha^2 L (1-\alpha^2 L^2)}{2} \|\opB z_0 - \opB z_{1/2}\|^2 \le \frac{\alpha (1 - 3r + 6r^2 - 2r^4)}{2 (1-r)^2 (1+r) (2+r)} \|\opB z_0\|^2 + \frac{1}{2\alpha} \|z_0 - z_\star\|^2 = V_0 .
\end{align*}
Clearly, if $r$ is small enough, $p_0 = \frac{\alpha (1 - 3r + 6r^2 - 2r^4)}{2 (1-r)^2 (1+r) (2+r)} > 0$.

\paragraph{Case $k\ge 1$.}
From the definition of the EAG iterates, we have
\begin{align*}
    z_{k+1} - z_0 & = (1-\beta_k) (z_k - z_0) - \alpha \opB z_{k+1/2} \\
    z_k - z_{k+1} & = \beta_k (z_k - z_0) + \alpha \opB z_{k+1/2} \\
    z_{k+1/2} - z_{k+1} & = \alpha \opB z_{k+1/2} - \alpha \opB z_k .
\end{align*}
Therefore, as in previous sections, we can lower-bound $V_k - V_{k+1}$ as
\begin{align}
\label{eqn:MP_EAG_Vk-Vkp1_lowerbound}
    & V_k - V_{k+1} \ge V_k - V_{k+1} - \frac{q_k}{\beta_k} \langle z_k - z_{k+1} , \opB z_k - \opB z_{k+1} \rangle \nonumber \\ 
    & \quad = p_k \|\opB z_k\|^2 - \frac{\alpha q_k}{\beta_k} \langle \opB z_k, \opB z_{k+1/2} \rangle + \frac{\alpha q_k}{\beta_k (1-\beta_k)} \langle \opB z_{k+1/2} , \opB z_{k+1} \rangle - p_{k+1} \|\opB z_{k+1}\|^2 \nonumber \\
    & \quad = p_k \|\opB z_k\|^2 - \alpha k(k+1) \langle \opB z_k, \opB z_{k+1/2} \rangle + \alpha (k+1)^2 \langle \opB z_{k+1/2} , \opB z_{k+1} \rangle - p_{k+1} \|\opB z_{k+1}\|^2 .
\end{align}
As $\opB$ is $L$-Lipschitz,
\begin{align*}
    0 & \le L^2 \|z_{k+1/2} - z_{k+1}\|^2 - \|\opB z_{k+1/2} - \opB z_{k+1}\|^2 \\
    & = \alpha^2 L^2 \|\opB z_{k+1/2} - \opB z_k\|^2 - \|\opB z_{k+1/2} - \opB z_{k+1}\|^2 .
\end{align*}
Combining this with \eqref{eqn:MP_EAG_Vk-Vkp1_lowerbound}, we obtain
\begin{align*}
    & V_k - V_{k+1} - \epsilon (k+1)^2 \|\opB z_k - \opB z_{k+1/2}\|^2 \\
    & \ge p_k \|\opB z_k\|^2 - \alpha k(k+1) \langle \opB z_k, \opB z_{k+1/2} \rangle + \alpha (k+1)^2 \langle \opB z_{k+1/2} , \opB z_{k+1} \rangle - p_{k+1} \|\opB z_{k+1}\|^2 \\
    & \quad - \epsilon (k+1)^2 \|\opB z_k - \opB z_{k+1/2}\|^2 - \tau \left( \alpha^2 L^2 \|\opB z_{k+1/2} - \opB z_k\|^2 - \|\opB z_{k+1/2} - \opB z_{k+1}\|^2 \right) \\
    &= (p_k - \alpha^2 L^2 \tau - \epsilon (k+1)^2) \left\|\opB z_k \right\|^2 + (\tau (1-\alpha^2 L^2) - \epsilon (k+1)^2) \left\| \opB z_{k+1/2} \right\|^2 + (\tau - p_{k+1}) \left\|\opB z_{k+1} \right\|^2\\
    & \quad + \left( 2\alpha^2 L^2 \tau + 2\epsilon (k+1)^2 - \alpha k(k+1) \right)\, \left\langle \opB z_k, \opB z_{k+1/2} \right\rangle + \left(\alpha(k+1)^2 - 2 \tau \right) \, \left\langle \opB z_{k+1/2}, \opB z_{k+1} \right\rangle\\
    &= \mathrm{Tr} \left( \mathbf{M}_k \mathbf{S}_k \mathbf{M}_k^\intercal \right),
\end{align*}
where $\tau > 0$, $\mathbf{M}_k := \begin{bmatrix}
\opB z_k & \opB z_{k+1/2} & \opB z_{k+1}
\end{bmatrix}$
and
\begin{gather*}
    \mathbf{S}_k = \begin{bmatrix}
    p_k - \alpha^2 L^2 \tau - \epsilon (k+1)^2                     & \alpha^2 L^2 \tau + \epsilon (k+1)^2 - \frac{\alpha}{2} k(k+1)          & 0 \\
    \alpha^2 L^2 \tau + \epsilon (k+1)^2 - \frac{\alpha}{2} k(k+1) & \tau (1-\alpha^2 L^2) - \epsilon (k+1)^2                                & \frac{1}{2}\alpha(k+1)^2 - \tau \\
    0                                                              & \frac{1}{2}\alpha(k+1)^2 - \tau                                         & \tau - p_{k+1}
    \end{bmatrix} \\
    = \begin{bmatrix}
    s_{11} & s_{12} & 0      \\
    s_{12} & s_{22} & s_{23} \\
    0      & s_{23} & s_{33}
    \end{bmatrix} .
\end{gather*}
Now, we choose $\tau$ as
\begin{align*}
    \tau & = \frac{(k+1)\left(k(1-\alpha L + \alpha^2 L^2 + \alpha^3 L^3) - \alpha L(2 - \alpha L - \alpha^2 L^2) \right)}{4\alpha L^2} .
\end{align*}
For notational simplicity, let $\alpha L = r$.
Then by direct computation we obtain
\begin{gather*}
    s_{11} = \frac{\alpha}{4(1+r)} \left( (1 - r^2)^2 k^2 + (1 - 2r - 3r^2 + 2r^4) k - r (2+r-r^3) \right) \\
    s_{12} = - \frac{\alpha (1-r) (k+1) \left( (1-r^2)k - r^2 \right) }{4} .
\end{gather*}
By Lemma~\ref{lem:positive_for_small_stepsizes}, $s_{11} > 0$ for sufficiently small $r$, and in that case, Young's inequality gives
\begin{align*}
    s_{11} \|\opB z_k\|^2 + 2s_{12} \langle \opB z_k , \opB z_{k+1/2} \rangle + \frac{s_{12}^2}{s_{11}} \|\opB z_{k+1/2}\|^2 \ge 0.
\end{align*}
Continuing on, we compute
\begin{align*}
    t_{22} & = s_{22} - \frac{s_{12}^2}{s_{11}} \\
    & = \frac{(1-r^2) \left( T_{22}^{(3)}(r) k^3 + T_{22}^{(2)}(r) k^2 + T_{22}^{(1)}(r) k + T_{22}^{(0)}(r) \right) }
   {4Lr \left( \left(1-r^2\right)^2 k - r \left( 2 + r - r^3 \right)\right)}
\end{align*}
where $T_{22}^{(\ell)}(r)$, $\ell=0,1,2,3$, are polynomials in $r$ defined by
\begin{align*}
	T_{22}^{(0)}(r) & = r^2 (4 + r^2 - r^3) \\
	T_{22}^{(1)}(r) & = - r \left( 4 - 6r - 2r^2 - 3r^3 + 3r^4 \right) \\
	T_{22}^{(2)}(r) & = 1 -5r + 4r^3 + 3r^4 - 3r^5 \\
	T_{22}^{(3)}(r) & = (1+r)^2 (1-r)^3 ,
\end{align*}
and
\begin{align*}
    s_{23} = - \frac{(1+r)(k+1) \left( (1-r)^2 k - r(2-r) \right)}{4Lr}
\end{align*}
By Lemma~\ref{lem:positive_for_small_stepsizes} again, $t_{22} > 0$ for sufficiently small $r$ and then Young's inequality implies that
\begin{align*}
    t_{22} \|\opB z_{k+1/2}\|^2 + 2s_{23} \langle \opB z_{k+1/2} , \opB z_{k+1} \rangle + \frac{s_{23}^2}{t_{22}} \|\opB z_{k+1}\|^2 \ge 0 . 
\end{align*}
Finally, we have
\begin{align*}
    & \mathrm{Tr} \left( \mathbf{M}_k \mathbf{S}_k \mathbf{M}_k^\intercal \right) \\
    & = \left( s_{11} \|\opB z_k\|^2 + 2s_{12} \langle \opB z_k , \opB z_{k+1/2} \rangle + \frac{s_{12}^2}{s_{11}} \|\opB z_{k+1/2}\|^2 \right) \\
    & \quad + \left( t_{22} \|\opB z_{k+1/2}\|^2 + 2s_{23} \langle \opB z_{k+1/2} , \opB z_{k+1} \rangle + \frac{s_{23}^2}{t_{22}} \|\opB z_{k+1}\|^2 \right) + \left( s_{33} - \frac{s_{23}^2}{t_{22}} \right) \|\opB z_{k+1}\|^2 \\
    & \ge 0 ,
\end{align*}
as the following computation and Lemma~\ref{lem:positive_for_small_stepsizes} shows that $s_{33} - \frac{s_{23}^2}{t_{22}} > 0$ for sufficiently small $r$:
\begin{align*}
    s_{33} - \frac{s_{23}^2}{t_{22}} = \frac{\alpha r\left( (1 - 4r - 2r^2 + r^4) k - r(6 - r - r^3) \right)}{2(1-r^2) \left( (1+r)^2 (1-r) k - r(2 + r + r^2) \right)} .
\end{align*}
Therefore, we have established $V_k - V_{k+1} \ge 0$ for small $r$.

\paragraph{Nonnegativity.}
Observe that
\begin{align*}
    p_k - \frac{\alpha k^2}{2} = \frac{\alpha (k+1)(k+\alpha L(k-1)) - \alpha(1+\alpha L)k^2}{2(1+\alpha L)} = \frac{\alpha (k-\alpha L)}{2(1+\alpha L)} > 0
\end{align*}
provided that $k\ge 1$ and $\alpha < \frac{1}{L}$. Thus,
\begin{align*}
    V_k & \ge \frac{\alpha k^2}{2} \|\opB z_k\|^2 + k \langle \opB z_k, z_k - z_0 \rangle + \frac{1}{2\alpha}\|z_0 - z_\star\|^2 \\
    & \ge \frac{\alpha k^2}{2} \|\opB z_k\|^2 + k \langle \opB z_k, z_\star - z_0 \rangle + \frac{1}{2\alpha}\|z_0 - z_\star\|^2 \ge 0
\end{align*}
by monotonicity and Young's inequality.

\end{proof}

\begin{proof}[Proof of Lemma~\ref{lem:APS_summability}]
We establish the following Lyapunov analysis: with same $p_k, q_k$ as in Lemma~\ref{lem:EAG_summability},
\begin{align*}
    V_k = p_k \|\opB z_k\|^2 + q_k \langle \opB z_k, z_k - z_0 \rangle + p_k L^2 \|z_k - v_k\|^2 + \frac{1}{2\alpha} \|z_0 - z_\star\|^2
\end{align*}
satisfies $V_k \ge 0$ and $V_k - V_{k+1} \ge \epsilon(k+1)^2 \|\opB v_k - \opB v_{k+1}\|^2$ for $k=0,1,\dots$, $\alpha \in \left(0, \frac{\eta}{L}\right)$, and $\epsilon = \frac{\alpha^2 L \left( 1 - \alpha L - \alpha^2 L^2 - \alpha^3 L^3 \right)}{2(1 + \alpha L)}$ with $\eta \in (0,1)$ sufficiently small.
This will imply
\begin{align*}
    \epsilon \sum_{k=0}^N (k+1)^2 \|\opB z_k - \opB z_{k+1/2}\|^2 & \le V_0 \le \left( p_0 L^2 + \frac{1}{2\alpha} \right) \|z_0 - z_\star\|^2 ,
\end{align*}
so that
\begin{align*}
    \sum_{k=0}^\infty (k+1)^2 \|\opB v_k - \opB v_{k+1}\|^2 \le \frac{1}{\epsilon} \left( p_0 L^2 + \frac{1}{2\alpha} \right) \|z_0 - z_\star\|^2 = \frac{C}{\alpha^2} \|z_0 - z_\star\|^2
\end{align*}
with
\begin{align*}
    C = \frac{2 - r - 2r^2 - 2r^3 + 7r^4 - 2r^6}{r(1-r)^2 (2+r)(1-r-r^2-r^3)} > 0
\end{align*}
provided that $r = \alpha L \in (0,\eta)$ for $\eta$ small enough.

\paragraph{Nonnegativity.}
Proved in the exact same way as in Lemma~\ref{lem:EAG_summability}, except that we have an additional nonnegative term $p_k L^2 \|z_k - v_k\|^2$.

\paragraph{Case $k=0$.}
We have $v_1 = z_0 - \alpha\opB v_0 = z_0 - \alpha\opB z_0$ and $z_1 = z_0 - \alpha\opB v_1$. Observe that
\begin{align}
    & V_1 + \epsilon \|\opB v_0 - \opB v_1\|^2 \nonumber \\
    & = \frac{\alpha}{1+\alpha L}\|\opB z_1\|^2 + \langle \opB z_1, z_1 - z_0 \rangle + p_1 L^2 \|z_1 - v_1\|^2 + \frac{1}{2\alpha} \|z_0 - z_\star\|^2 + \epsilon \|\opB z_0 - \opB v_1\|^2 \nonumber \\
    & = \frac{\alpha}{1+\alpha L}\|\opB z_1\|^2 + \langle \opB z_1, -\alpha \opB v_1 \rangle + p_1 L^2 \|\alpha(\opB z_0 - \opB v_1)\|^2 + \frac{1}{2\alpha} \|z_0 - z_\star\|^2 + \epsilon \|\opB z_0 - \opB v_1\|^2 \nonumber \\
    & = \frac{\alpha}{1+\alpha L}\|\opB z_1\|^2 + \langle \opB z_1, -\alpha \opB v_1 \rangle + \frac{1}{2\alpha} \|z_0 - z_\star\|^2 + \left( \alpha^2 L^2 p_1 + \epsilon \right) \|\opB z_0 - \opB v_1\|^2 . 
    \label{eqn:APS_Lyapunov_initial}
\end{align}
Because
\begin{align*}
    \alpha^2 L^2 p_1 + \epsilon = \alpha^2 L^2 \frac{\alpha}{1+\alpha L} + \frac{\alpha^2 L \left( 1 - \alpha L - \alpha^2 L^2 - \alpha^3 L^3 \right)}{2(1 + \alpha L)} = \frac{\alpha^2 L (1-\alpha^2 L^2)}{2} ,
\end{align*}
we see that the expression~\eqref{eqn:APS_Lyapunov_initial} coincides with~\eqref{eqn:EAG_C_Lyapunov_initial}, except that $\opB z_{1/2}$ is replaced with $\opB v_1$.
Therefore, the rest of the proof is exactly the same as in Lemma~\ref{lem:EAG_summability} (add the same set of inequalities to~\eqref{eqn:APS_Lyapunov_initial} with $v_1$ and $\opB v_1$ in places of $z_{1/2}$ and $\opB z_{1/2}$, respectively).

\paragraph{Case $k\ge 1$.}
Using the identities 
\begin{align*}
    z_{k+1} - z_0 & = (1-\beta_k) (z_k - z_0) - \alpha \opB v_{k+1} \\
    z_k - z_{k+1} & = \beta_k (z_k - z_0) + \alpha \opB v_{k+1} \\
    v_k - z_k & = \alpha \opB v_k - \alpha \opB v_{k-1} \\
    v_{k+1} - z_{k+1} & = \alpha \opB v_{k+1} - \alpha \opB v_k ,
\end{align*}
we begin with
\begin{align}
    & V_k - V_{k+1} \ge V_k - V_{k+1} - \frac{q_k}{\beta_k} \langle z_k - z_{k+1} , \opB z_k - \opB z_{k+1} \rangle \nonumber \\ 
    & \quad = p_k \|\opB z_k\|^2 - \frac{\alpha q_k}{\beta_k} \langle \opB z_k, \opB v_{k+1} \rangle + \frac{\alpha q_k}{\beta_k (1-\beta_k)} \langle \opB v_{k+1} , \opB z_{k+1} \rangle - p_{k+1} \|\opB z_{k+1}\|^2 \nonumber \\
    & \quad \quad + \alpha^2 p_k L^2 \|\opB v_{k-1} - \opB v_k\|^2 - \alpha^2 p_{k+1} L^2 \|\opB v_k - \opB v_{k+1}\|^2 \nonumber \\
    & \begin{aligned}
        & \quad = p_k \|\opB z_k\|^2 - \alpha k(k+1) \langle \opB z_k, \opB v_{k+1} \rangle + \alpha (k+1)^2 \langle \opB v_{k+1} , \opB z_{k+1} \rangle - p_{k+1} \|\opB z_{k+1}\|^2 \\
        & \quad \quad + \alpha^2 p_k L^2 \|\opB v_{k-1} - \opB v_k\|^2 - \alpha^2 p_{k+1} L^2 \|\opB v_k - \opB v_{k+1}\|^2 .    
    \end{aligned}
    \label{eqn:APS_monotonicity}
\end{align}
Following the idea in the prior work \cite{Tran-DinhLuo2021_halperntype} which introduced the anchored Popov's scheme, we use the following inequality:
\begin{align*}
    \|\opB v_k - \opB v_{k+1}\|^2 & = \|(\opB v_k - \opB z_k) + (\opB z_k - \opB v_{k+1})\|^2 \\
    & \le 2\|\opB v_k - \opB z_k\|^2 + 2\|\opB z_k - \opB v_{k+1}\|^2 \\
    & \le 2L^2 \|v_k - z_k\|^2 + 2\|\opB z_k - \opB v_{k+1}\|^2 \\ 
    & = 2\alpha^2 L^2 \|\opB v_k - \opB v_{k-1}\|^2 + 2\|\opB z_k - \opB v_{k+1}\|^2 ,
\end{align*}
which implies
\begin{align}
\label{eqn:APS_Lipschitz_with_parallelogram}
    2\alpha^2 L^2 \|\opB v_k - \opB v_{k-1}\|^2 + 2\|\opB z_k - \opB v_{k+1}\|^2 - \|\opB v_k - \opB v_{k+1}\|^2 \ge 0 .
\end{align}
Additionally, note the simple Lipschitzness inequality
\begin{align}
\label{eqn:APS_Lipschitz_only}
    0 & \le L^2 \|z_{k+1} - v_{k+1}\|^2 - \|\opB z_{k+1} - \opB v_{k+1}\|^2 \nonumber \\
    & = \alpha^2 L^2 \|\opB v_k - \opB v_{k+1}\|^2 - \|\opB z_{k+1} - \opB v_{k+1}\|^2 .
\end{align}
Then, given positive constants $\tau_1, \tau_2$, we can lower-bound $V_k - V_{k+1} - \epsilon (k+1)^2 \|\opB v_k - \opB v_{k+1}\|^2$ by using \eqref{eqn:APS_monotonicity} and subtracting $\frac{\tau_1}{2}\times\eqref{eqn:APS_Lipschitz_with_parallelogram}$ and $\tau_2\times\eqref{eqn:APS_Lipschitz_only}$ as
\begin{align}
    & V_k - V_{k+1} - \epsilon (k+1)^2 \|\opB v_k - \opB v_{k+1}\|^2 \nonumber \\
    & \quad \ge p_k \|\opB z_k\|^2 - \alpha k(k+1) \langle \opB z_k, \opB v_{k+1} \rangle + \alpha (k+1)^2 \langle \opB v_{k+1} , \opB z_{k+1} \rangle - p_{k+1} \|\opB z_{k+1}\|^2 \nonumber \\
    & \quad \quad + \alpha^2 p_k L^2 \|\opB v_{k-1} - \opB v_k\|^2 - \alpha^2 p_{k+1} L^2 \|\opB v_k - \opB v_{k+1}\|^2 - \epsilon (k+1)^2 \|\opB v_k - \opB v_{k+1}\|^2 \nonumber \\
    & \quad \quad - \frac{\tau_1}{2} \left( 2\alpha^2 L^2 \|\opB v_k - \opB v_{k-1}\|^2 + 2\|\opB z_k - \opB v_{k+1}\|^2 - \|\opB v_k - \opB v_{k+1}\|^2 \right) \nonumber \\
    & \quad \quad - \tau_2 \left( \alpha^2 L^2 \|\opB v_k - \opB v_{k+1}\|^2 - \|\opB z_{k+1} - \opB v_{k+1}\|^2 \right) \nonumber \\
    & \begin{aligned}
        & \quad = \alpha^2 L^2 (p_k - \tau_1) \|\opB v_{k-1} - \opB v_k\|^2 \\
        & \quad \quad + \left( \frac{\tau_1}{2} - \alpha^2 L^2 \tau_2 - \alpha^2 L^2 p_{k+1} - \epsilon (k+1)^2 \right) \|\opB v_k - \opB v_{k+1}\|^2 \\
        & \quad \quad + (p_k - \tau_1) \|\opB z_k\|^2 + \left( 2\tau_1 - \alpha k (k+1) \right) \langle \opB z_k, \opB v_{k+1} \rangle + (\tau_2 - \tau_1) \|\opB v_{k+1}\|^2 \\
        & \quad \quad + \left( \alpha (k+1)^2 - 2\tau_2 \right) \langle \opB v_{k+1}, \opB z_{k+1} \rangle + (\tau_2 - p_{k+1}) \|\opB z_{k+1}\|^2 .
    \end{aligned}
    \label{eqn:APS_Vk-Vkp1-summable_term_lowerbound}
\end{align}
We take
\begin{align*}
    \tau_1 = \frac{\alpha (k+1) \left((1+2\alpha^2 L^2)k - \alpha L (1 - 2\alpha L)\right)}{3}, \quad \tau_2 = \frac{\tau_1}{4\alpha ^2 L^2},
\end{align*}
which are positive for $\alpha L$ sufficiently small (by Lemma~\ref{lem:positive_for_small_stepsizes}), and plug in $\epsilon = \frac{\alpha^2 L \left( 1 - \alpha L - \alpha^2 L^2 - \alpha^3 L^3 \right)}{2(1 + \alpha L)}$ and
\begin{align*}
    p_k = \frac{\alpha (k+1)(k+\alpha L(k-1))}{2(1+\alpha L)}, \quad p_{k+1} = \frac{\alpha (k+2)(k+1+\alpha Lk)}{2(1+\alpha L)}
\end{align*}
to obtain
\begin{align}
\label{eqn:APS_pk_minus_tau1}
    p_k - \tau_1 = \frac{\alpha (k+1) \left( (1 + r - 4r^2 - 4r^3)k - r(1 + 2r + 4r^2) \right)}{6(1+r)}
\end{align}
and 
\begin{align*}
    \frac{\tau_1}{2} - \alpha^2 L^2 \tau_2 - \alpha^2 L^2 p_{k+1} - \epsilon (k+1)^2 = \frac{\alpha\left( T^{(2)}(r) k^2 + T^{(1)}(r) k + T^{(0)}(r) \right)}{12(1+r)}
\end{align*}
where $r = \alpha L$ and $T^{(\ell)}(r)$, $\ell=0,1,2$, are polynomials in $r$ defined by
\begin{align*}
	T^{(0)}(r) & = -r(7 + 5r - 8r^2 - 6r^3) \\
	T^{(1)}(r) & = 1 - 12r - 3r^2 + 4r^3 + 12r^4 \\
	T^{(2)}(r) & = 1 - 5r + 2r^2 + 2r^3 + 6r^4 .
\end{align*}
Therefore, by Lemma~\ref{lem:positive_for_small_stepsizes}, $\tau_1, \tau_2$ and the coefficients of the first two norm sqaure terms in \eqref{eqn:APS_Vk-Vkp1-summable_term_lowerbound} are positive provided that $r$ is sufficiently small, and thus we obtain
\begin{align*}
    & V_k - V_{k+1} - \epsilon (k+1)^2 \|\opB v_k - \opB v_{k+1}\|^2 \\
    & \quad \ge (p_k - \tau_1) \|\opB z_k\|^2 + \left( 2\tau_1 - \alpha k (k+1) \right) \langle \opB z_k, \opB v_{k+1} \rangle + (\tau_2 - \tau_1) \|\opB v_{k+1}\|^2 \\
    & \quad \quad + \left( \alpha (k+1)^2 - 2\tau_2 \right) \langle \opB v_{k+1}, \opB z_{k+1} \rangle + (\tau_2 - p_{k+1}) \|\opB z_{k+1}\|^2 \\
    & \quad = \mathrm{Tr} \left( \mathbf{M}_k \mathbf{S}_k \mathbf{M}_k^\intercal \right) ,
\end{align*}
where $\mathbf{M}_k := \begin{bmatrix}
\opB z_k & \opB v_{k+1} & \opB z_{k+1}
\end{bmatrix}$
and
\begin{gather*}
    \mathbf{S}_k = \begin{bmatrix}
    p_k - \tau_1 & \tau_1 - \frac{\alpha}{2} k(k+1) & 0 \\
    \tau_1 - \frac{\alpha}{2} k(k+1) & \tau_2 - \tau_1 & \frac{\alpha}{2} (k+1)^2 - \tau_2 \\
    0 & \frac{\alpha}{2} (k+1)^2 - \tau_2 & \tau_2 - p_{k+1}
    \end{bmatrix} = \begin{bmatrix}
    s_{11} & s_{12} & 0      \\
    s_{12} & s_{22} & s_{23} \\
    0      & s_{23} & s_{33}
    \end{bmatrix} .
\end{gather*}
We have seen in~\eqref{eqn:APS_pk_minus_tau1} that $s_{11} = p_k - \tau_1 > 0$ if $r$ is small. 
Next, we have
\begin{align*}
    t_{22} & = s_{22} - \frac{s_{12}^2}{s_{11}} \\
    & = \frac{(1-2r) (k+1) \left( (1 + 3r + 2r^2)k - r(1 - 2r) \right) \left( (1-4r^2)k - r(1 + 4r) \right) }{12Lr\left( (1 + r - 4r^2 - 4r^3)k - r(1 + 2r + 4r^2) \right)}
\end{align*}
and
\begin{align*}
    t_{33} & = s_{33} - \frac{s_{23}^2}{t_{22}} = \frac{\alpha r \left( (1 - 8r - 4r^2 - 4r^3)k - 2r(5 - 2r + 2r^2) \right)}{2 (1+r) (1-2r) \left( (1+3r+2r^2)k - r(1-2r) \right)} ,
\end{align*}
and as Lemma~\ref{lem:positive_for_small_stepsizes} shows that these quantities are positive for small $r$, we conclude that
\begin{align*}
    & V_k - V_{k+1} - \epsilon (k+1)^2 \|\opB v_k - \opB v_{k+1}\|^2 \\
    & \quad \ge \mathrm{Tr} \left( \mathbf{M}_k \mathbf{S}_k \mathbf{M}_k^\intercal \right) \\
    & \quad = \left( s_{11} \|\opB z_k\|^2 + 2s_{12} \langle \opB z_k , \opB v_{k+1} \rangle + \frac{s_{12}^2}{s_{11}} \|\opB v_{k+1}\|^2 \right) \\
    & \quad \quad + \left( t_{22} \|\opB v_{k+1}\|^2 + 2s_{23} \langle \opB v_{k+1} , \opB z_{k+1} \rangle + \frac{s_{23}^2}{t_{22}} \|\opB z_{k+1}\|^2 \right) + t_{33} \|\opB z_{k+1}\|^2 \\
    & \quad \ge 0.
\end{align*}

\end{proof}

\section{Regularity of convex and convex-concave functions}
\label{sec:regularity}

Consider an extended real-valued convex function $f\colon \reals^n \to \reals \cup \{\pm\infty\}$.
We say $f$ is \emph{proper} if $f(x) > -\infty$ for all $x\in\reals^n$ and $f(x) < +\infty$ for at least one $x\in\reals^n$.
The \emph{closure} $f$, written as $\cl f$, is defined as the constant function $\cl f \equiv -\infty$ if $f(x) = -\infty$ for some $x \in \reals^n$ and otherwise 
\begin{align*}
    \cl f (x) := \liminf_{x' \to x} f(x') .
\end{align*}
We say $f$ is \emph{closed} if $\cl f = f$ (when $f$ is proper, this is equivalent to lower semicontinuity).
If $f$ is \emph{closed, convex, and proper (CCP)}, its convex subdifferential operator $\partial f\colon \reals^n \rightrightarrows \reals^n$ defined by
\begin{align*}
    \partial f(x) = \{v \in \reals^n \,|\, f(x') \ge f(x) + \langle v, x' - x \rangle, ~\forall\, x' \in \reals^n \}
\end{align*}
is maximal monotone \cite[Corollary~31.5.2]{Rockafellar1970_convex}.
For an extended real-valued concave function $g$, we define $\cl g = -\cl (-g)$ and say it is proper (\emph{resp.\ }closed) if $-g$ is proper (\emph{resp.\ }closed) as a convex function.

Let $\vK\colon \reals^n \times \reals^m \to \reals \cup \{\pm\infty\}$ be an extended real-valued convex-concave function.
Define
\begin{align*}
    \domx \vK & = \{x\in\reals^n \,|\, \vK(x,y) < +\infty, ~\forall\, y \in \reals^m \} \\
    \domy \vK & = \{y\in\reals^n \,|\, \vK(x,y) > -\infty, ~\forall\, x \in \reals^n \} \\
    & \,\, \dom \vK = \domx \vK \times \domy \vK .
\end{align*}
We say $\vK$ is \emph{proper} if $\dom\vK \ne \emptyset$.
Define $\clx \vK(x,y)$ as the function obtained by taking the convex closure of $\vK(x,y)$ with respect to $x$ for each fixed $y$, and define $\cly \vK(x,y)$ analogously.
Then both $\clx \vK$ and $\cly \vK$ are convex-concave \cite[Corollary~33.1.1]{Rockafellar1970_convex}.
Respectively define the \emph{lower} and \emph{upper closures} of $\vK$ by
\begin{align*}
    \underline{\vK} = \clx \cly \vK , \quad  \overline{\vK} = \cly \clx \vK,
\end{align*}
which satisfy $\underline{\vK}\le \overline{\vK}$.
We say $\vK$ is \emph{closed} if $\clx \vK = \clx \cly \vK$ and $\cly \vK = \cly \clx \vK$.

The \emph{saddle subdifferential operator} $\ssd \vK\colon \reals^n \times \reals^m \rightrightarrows \reals^n \times \reals^m$ is defined by
\begin{align*}
    \ssd \vK (x,y) = \{(v,w)\in \reals^n \times \reals^m \,|\, v \in \partial_x \vK(x,y), w \in \partial_y (-\vK)(x,y) \},
\end{align*}
where $\partial_x \vK(x,y)$ denotes the convex subdifferential of $\vK(x, y)$ as a convex function of $x$ (for each fixed $y$), and $\partial_y (-\vK)(x,y)$ denotes the convex subdifferential of $-\vK(x, y)$ as a convex function of $y$ (for each fixed $x$).
If $\vK$ is \emph{closed, convex-concave, and proper (CCCP)}, then $\ssd\vK$ is maximal monotone \cite[Corollary~37.5.2]{Rockafellar1970_convex} and $(x_\star, y_\star)$ is a minimax solution to
\begin{align*}
    \underset{x \in \reals^n}{\textrm{minimize}} \,\, \underset{y \in \reals^m}{\textrm{maximize}} \,\,\,\, \vK (x,y)
\end{align*}
if and only if $0 \in \ssd\vK(x_\star, y_\star)$.
Given $z\in\reals^n \times \reals^m$, $\vK$ is differentiable at $z$ if and only if $\ssd\vK (z)$ is singleton, and if that is the case, $\ssd\vK(z) = \sop\vK(z)$ \cite[Theorem~35.8]{Rockafellar1970_convex}.

We say two convex-concave functions $\vK, \vL$ are \emph{equivalent} if $\clx \vK = \clx \vL$ and $\cly \vK = \cly \vL$.
If $\vK, \vL$ are equivalent, then they share the set of minimax solutions,
the function values at the solutions (if any) are equal, and $\ssd\vK=\ssd\vL$ \cite[Theorem~36.4, Corollary~37.4.1]{Rockafellar1970_convex}.
If $\vK$ is a closed convex-concave function, then the equivalence class of closed convex-concave functions containing $\vK$ is precisely characterized \cite[Theorem~34.2]{Rockafellar1970_convex} by
\begin{align*}
    \left\{\vL\colon \reals^n \times \reals^m \to \reals \cup \{\pm\infty\} \,|\, \vL \text{ is closed, convex-concave, and } \underline{\vK} \le \vL \le \overline{\vK} \right\} .
\end{align*}
When $f$ and $g$ are respectively CCP functions on $\reals^n$ and $\reals^m$, we define
\begin{align}
\label{eqn:f_minus_g}
    f(x) - g(y) = \begin{cases}
        f(x) - g(y) & x\in\dom f, y\in\dom g \\
        +\infty     & x\notin\dom f, y\in\dom g \\
        -\infty     & x\in\dom f, y\notin\dom g
    \end{cases}
\end{align}
but when $x\notin\dom f, y\notin\dom g$, it seems unclear how to define $f(x)-g(y)=+\infty-\infty$.
However, it turns out that one can extend~\eqref{eqn:f_minus_g} to a CCCP function on $\reals^n \times \reals^m$, and any such extensions are equivalent to one another \cite[Theorem~34.4]{Rockafellar1970_convex}.
In particular, using the convention $+\infty - \infty = -\infty$, one gets the lower closure (minimal element) of the equivalence class containing the equivalent CCCP extensions of~\eqref{eqn:f_minus_g}.
Alternatively, using the convention $+\infty - \infty = +\infty$, one gets the upper closure (maximal element) of the equivalence class.
In this regard, we can safely deal with the case $\lagrange_p (x,y) = f(x)-g(y)$ (where the equality in fact means $\lagrange_p$ belongs to the equivalence class of CCCP extensions of \eqref{eqn:f_minus_g}) without pathology or ambiguities.

\end{document}